\theoremstyle{plain}
\newtheorem{thm}{Theorem}[section] 
\newtheorem{cor}[thm]{Corollary}
\newtheorem{prop}[thm]{Proposition}
\newtheorem{conj}[thm]{Conjecture}
\newtheorem{lem}[thm]{Lemma}
\theoremstyle{definition} 
\newtheorem{defn}[thm]{Definition}
\newtheorem*{convention}{Convention}
\theoremstyle{remark}
\newtheorem{rem}[thm]{Remark}
\newtheorem*{acknowledgement}{Acknowledgments}
\newcommand{\F}{\mathbb{F}}
\newcommand{\N}{\mathbb{N}}
\newcommand{\Q}{\mathbb{Q}} 
\newcommand{\C}{\mathbb{C}} 
\newcommand{\R}{\mathbb{R}} 
\newcommand{\Z}{\mathbb{Z}}
\newcommand{\D}{\Delta}
\newcommand{\fa}{\mathfrak{a}}
\newcommand{\fb}{\mathfrak{b}}
\newcommand{\m}{\mathfrak{m}}
\newcommand{\p}{\mathfrak{p}}
\newcommand{\fq}{\mathfrak{q}}
\newcommand{\U}{\mathfrak{U}}
\newcommand{\fpt}{\mathrm{fpt}}
\newcommand{\FPT}{\mathrm{FPT}}
\newcommand{\lct}{\mathrm{lct}}
\newcommand{\ulim}{\operatorname{ulim}}
\newcommand{\SFRR}{\mathrm{SFRR}}
\newcommand{\Acc}{\mathrm{Accum}}
\newcommand{\TT}{\mathcal{T}}
\newcommand{\ultra}[1]{{}^* #1}
\newcommand{\cata}[1]{#1_\#}
\newcommand{\catae}[1]{[ #1 ]_m}
\newcommand{\age}[1]{\lceil #1 \rceil}
\newcommand{\sage}[1]{\lfloor #1 \rfloor}
\newcommand{\fr}[1]{\{ #1 \}}
\newcommand{\qadic}[3][q]{\langle #2 \rangle_{#3, #1}}
\def\ge{\geqslant}
\def\le{\leqslant}
\def\phi{\varphi}
\def\epsilon{\varepsilon}
\def\to{\longrightarrow}
\def\mod{\operatorname{\,mod}}
\def\Hom{\operatorname{Hom}}
\def\Spec{\operatorname{Spec}}
\newsavebox{\circlebox}
\savebox{\circlebox}{\fontencoding{OMS}\selectfont\Large\char13}
\newlength{\circleboxwdht}
\title{On accumulation points of $F$-pure thresholds on regular local rings}
\author{Kenta Sato}
\address{iTHEMS Research Program, RIKEN, 2-1, Wako, Saitama 351-0198, Japan}
\email{kenta.sato.gz@riken.jp}
\thanks{}
\keywords{accumulation points, $F$-pure thresholds, strongly $F$-regular singularities, non-standard extension}
\subjclass[2010]{13A35, 14B05, 13B25}
\begin{document}

\begin{abstract}
Blickle, Musta\c{t}\u{a} and Smith proposed two conjectures on the limits of $F$-pure thresholds.
One conjecture asks whether or not the limit of a sequence of $F$-pure thresholds of principal ideals on regular local rings of fixed dimension can be written as an $F$-pure thresshold in lower dimension.
Another conjecture predicts that any $F$-pure threshold of a formal power series can be written as the $F$-pure threshold of a polynomial.
In this paper, we prove that the first conjecture has a counterexample but a weaker statement still holds.
We also give a partial affirmative answer to the second conjecture.
\end{abstract}

\maketitle
\markboth{K.~SATO}{ON ACCUMULATION POINTS OF $F$-PURE THRESHOLDS}

\section{Introduction}


In characteristic zero, log canonical thresholds and their limits have played an increasingly important role in birational geometry.
Recently, Hacon, M\textsuperscript{c}Kernan and Xu verified the ascending chain condition for the set of log canonical thresholds in their celebrated paper \cite{HMX14}, which was applied to the termination of flips (\cite{Bir07}) and the boundedness of log Fano varieties (\cite{Bir16}).

On the other hand, there is another remarkable property on the limits of sequences of log canonical thresholds.
For an integer $d>0$, we denote by $\TT_d \subseteq \Q$ the set of all log canonical thresholds $\lct(X;D)$ of a non-zero effective divisor $D$ on a $d$-dimensional smooth variety $X$ over $\C$.
Then the following property was predicted by Koll\'{a}r (\cite{Kol97}), proved by de Fernex, Ein and Musta\c{t}\u{a} (\cite{dFEM10}) and generalized to singular case in \cite{HMX14}.
\begin{thm}[\textup{\cite{dFEM10}}]\label{1.1}
Let $d>1$ be an integer.
Then any accumulation point of the set $\TT_d$ is contained in $\TT_{d-1}$.
\end{thm}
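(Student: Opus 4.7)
The plan is to reduce to a local formal-power-series setting and construct a ``generic limit'' of the converging sequence whose log canonical threshold realizes $c$, and which is forced to depend essentially on strictly fewer than $d$ coordinates.

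First I would reduce to the formal-local case. Since $\lct(X;D)$ equals the infimum of local log canonical thresholds $\lct_x(X;D)$ over the closed points $x \in \Supp D$, and a smooth $d$-dimensional variety is formally isomorphic to $\Spec \C[[x_1,\dots,x_d]]$ at any closed point, I may assume each element of $\TT_d$ is realized as $\lct_0(f)$ for some nonzero $f \in R := \C[[x_1,\dots,x_d]]$ vanishing at the origin. The hypothesis then furnishes a sequence $f_n \in R$ with $c_n := \lct_0(f_n) \to c$ and $c_n \neq c$ for all $n$. After passing to a subsequence, I assume the $c_n$ are monotone; multiplying by appropriate powers of the maximal ideal $\m$ I may also assume the multiplicities $\ord_\m(f_n)$ are uniformly bounded.

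Next I would form a generic limit $f_\infty$ of the sequence via a non-standard extension or ultraproduct construction. Fixing a non-principal ultrafilter $\mathcal{U}$ on $\N$, one obtains an extension field $K$ of $\C$ and an element $f_\infty \in K[[x_1,\dots,x_d]]$ whose coefficients are the ultralimits of the coefficients of $f_n$. Two crucial properties must be established: that $\lct_0(f_\infty) = c$, and that the ``essential embedding dimension'' of $f_\infty$ --- the minimum $r$ such that after a formal change of coordinates $f_\infty$ lies in $K[[y_1,\dots,y_r]]$ --- is at most $d-1$. The first property follows because each truncated multiplier ideal $\mathcal{J}(\lambda\cdot f) \mod \m^N$ depends only on finitely many Taylor coefficients of $f$, so the ideals for $f_\infty$ and for $f_n$ agree for $\mathcal{U}$-almost every $n$; combined with the semicontinuity of $\lct$ under specialization, this pins down $\lct_0(f_\infty)$ as the limit $c$.

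The heart of the argument, and the main obstacle, is the embedding-dimension drop. The idea: if $f_\infty$ genuinely depended on all $d$ variables, then by a constructibility statement for log canonical thresholds in families --- for instance through Musta\c{t}\u{a}'s jet-scheme formula $\lct_0(f) = d - \sup_N \dim J_N(V(f))/(N+1)$ --- the value of $\lct_0$ would be locally constant equal to $c$ on a Zariski-open subset of a finite-dimensional parameter space around the relevant jet of $f_\infty$. Since the $f_n$ approximate $f_\infty$ in the ultrafilter sense, $\mathcal{U}$-almost all $f_n$ would land in this open locus and satisfy $\lct_0(f_n) = c$, contradicting $c_n \neq c$. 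Hence $f_\infty$ must lie in $K[[y_1,\dots,y_{d-1}]]$ after a formal coordinate change. A standard Lefschetz/spreading-out argument then descends the equality $c = \lct_0(f_\infty)$ to an identity $c = \lct_0(\bar f)$ for some $\bar f \in \C[[y_1,\dots,y_{d-1}]]$, placing $c$ in $\TT_{d-1}$. The principal difficulty is executing the uniform jet-scheme/semicontinuity argument in the embedding-dimension step; the other ingredients are technical but standard.
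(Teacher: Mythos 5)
The paper does not give a proof of Theorem~\ref{1.1}: it is quoted from \cite{dFEM10} (generalized in \cite{HMX14}), and the paper's own contribution is the positive-characteristic analogue, namely Theorem~\ref{accum on pair}, proved via Theorem~\ref{accum characterize} and Lemma~\ref{isolated non-vert}. Since that chain of lemmas mirrors the \cite{dFEM10} strategy, it is a fair template to compare your proposal against.

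Your first two steps are sound and match the source: the reduction to the complete local ring, the construction of a generic limit $f_\infty$ over a field extension (ultraproduct or the equivalent generic-limit construction), and the identification $\lct_0(f_\infty)=c$ via finite determination of truncated multiplier ideals together with semicontinuity. You even note the need to normalize the sequence (monotonicity, $c_n\ne c$); good.

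The gap is exactly in the step you flag as ``the heart of the argument.'' You assert that $f_\infty$ must, after a formal change of coordinates, lie in $K[[y_1,\dots,y_{d-1}]]$, i.e.\ that its \emph{embedding dimension} drops, and you try to force this via constructibility of $\lct$ in finite-dimensional jet spaces. This is not what happens, and there is no reason for it to be true: the generic limit is in general a genuine $d$-variable power series. Constructibility of $\lct$ in families is a statement about loci in parameter spaces where $\lct$ is constant; it says nothing about the number of variables $f_\infty$ involves, and ``$f_\infty$ depends on all $d$ coordinates'' does not contradict ``$\lct(f_n)\ne c$.'' In fact, the paper's own two-dimensional counterexample (Corollary~\ref{Counter ex}) illustrates the phenomenon in the $F$-pure setting: the limit object there depends on both variables, and the accumulation point is nevertheless realized by passing to a smaller local ring, not by eliminating a variable from the equation.

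What \cite{dFEM10} actually proves, and what Lemma~\ref{isolated non-vert} encodes in this paper, is a statement about the multiplier (resp.\ test) ideal, not about the equation: one shows $\mathcal{J}(f_\infty^{\,c})$ is \emph{not} $\m$-primary. The mechanism is the one you already use to get $\lct_0(f_\infty)=c$: if $\m^N\subseteq\mathcal{J}(f_\infty^{\,c})$ for some $N$, then finite determination transfers this to $\mathcal{U}$-almost every $f_n$ and forces $\lct(f_n)=c$, contradicting $c_n\ne c$. Since the multiplier ideal is not $\m$-primary, its cosupport contains a non-maximal prime $\p$; by compatibility of multiplier/test ideals with localization one gets $\lct_{\p}(f_\infty)=c$ on the regular local ring $R_\p$ of dimension $\le d-1$, and a descent (spreading-out/Lefschetz) argument then places $c$ in $\TT_{d-1}$. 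So the lower-dimensional model arises by localizing at a non-closed point in the non-klt locus, not by rewriting $f_\infty$ in fewer variables. You should replace your embedding-dimension step with this ``multiplier ideal not $\m$-primary, then localize'' argument; as written, the key step of the proposal does not hold.
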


In this paper, we work in positive characteristic and consider an analogous problem for \textit{$F$-pure thresholds} (see Definition \ref{fpt def} below).
For a prime number $p>0$ and an integer $d>0$, we denote by $\TT_{d,p} \subseteq \Q$ the set of all $F$-pure thresholds $\fpt(A;(f))$ of a principal ideal $(f) \subsetneq A$ of an $F$-finite $d$-dimensional regular local ring $A$ of characteristic $p>0$, here $A$ is said to be \textit{$F$-finite} if the Frobenius map $F: A \to A$ is finite.
Motivated by several studies (\cite{TW04}, \cite{Tak13}, \cite{HnBWZ16}) which reveals a strong connection between $F$-pure thresholds in positive characteristic and log canonical thresholds in characteristic zero, the author recently verified the ascending chain condition for the set $\TT_{d,p}$ (\cite{Sat19a}, \cite{Sat19b}).

On the other hand, very little is known for the limits of descending sequences in $\TT_{d,p}$.
In the first half of this paper, we give partial answers to the following conjecture proposed by Blickle, Musta\c{t}\u{a} and Smith as a positive characteristic analogue of Theorem \ref{1.1}.

\begin{conj}[\textup{\cite{BMS09}}]\label{1.2}
Let $p>0$ be a prime number and $d>1$ be an integer.
Then any accumulation point of the set $\TT_{d,p}$ is contained in $\TT_{d-1,p}$.
\end{conj}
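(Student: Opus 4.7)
The plan is to adapt the characteristic-zero argument of de Fernex-Ein-Musta\c{t}\u{a} (Theorem \ref{1.1}), with ultraproducts playing the role of generic limits of power series in the positive-characteristic setting. Let $(c_n)_{n\in\N}$ be a strictly monotone sequence in $\TT_{d,p}$ converging to the accumulation point $c$, realized as $c_n = \fpt(A_n;(f_n))$ for $F$-finite regular local rings $(A_n,\m_n)$ of dimension $d$ with $f_n\in\m_n$. The goal is to produce an $F$-finite regular local ring $B$ of dimension $d-1$ and an element $g\in B$ satisfying $\fpt(B;(g)) = c$.

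First, fix a non-principal ultrafilter $\U$ on $\N$ and form the non-standard ring $\ultra{A}=\prod_n A_n/\U$ together with $\ultra{f}=[f_n]_\U$. The splitting-number formula $\fpt(A;(f)) = \lim_e a_e(f)/p^e$, where $a_e(f)=\max\{r : f^r\notin \m^{[p^e]}\}$, means that the value $c$ is recoverable from $\ultra{f}$, since for each standard $e$ the integer $a_e(f_n)$ is eventually constant on an ultrafilter set and the ratios $a_e(\ultra{f})/p^e$ therefore track $c$. The next task is to construct a standard-part operation that assigns to $(\ultra{A},\ultra{f})$ a genuine $F$-finite regular local ring $\cata{\ultra{A}}$ and element $\cata{\ultra{f}}$ for which the $F$-pure threshold is still $c$. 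A natural candidate selects lifts of a system of regular parameters $\ultra{x}_1,\ldots,\ultra{x}_d$ of $\ultra{A}$, identifies the subring they generate over an appropriate coefficient ring, passes to its $F$-finite $\m$-adic completion, and checks that $\ultra{f}$ agrees modulo a sufficiently deep power of $\m$ with an element of this completion.

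The dimension reduction is where one must exploit that $c$ is an \emph{accumulation} point and not merely a value of $\fpt$. The sequence $\fpt(A_n;(f_n))$ cannot be eventually constant, which should force $\ultra{f}$, after a suitable automorphism of $\ultra{A}$, to single out a distinguished regular parameter $x$ along which the limiting obstruction concentrates. The hope is that the image of $\ultra{f}$ modulo $(x)$ lives in a $(d-1)$-dimensional quotient and still has $F$-pure threshold equal to $c$, at which point applying the standard-part construction to this reduced datum yields the desired pair $(B,g)$.

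The main obstacle is this final descent. Unlike log canonical thresholds, $F$-pure thresholds admit no clean inversion of adjunction: restriction to a hypersurface can shift test ideals unpredictably, and the splitting numbers $a_e$ are sensitive to arithmetic features of the coefficients that need not be preserved by ultralimits. I would therefore expect the non-standard machinery to yield only a weakened form of the conjecture---for instance, that $c$ is the $F$-pure threshold of a power series in some $(d-1)$-dimensional regular local ring that is not necessarily $F$-finite---while failing to produce an honest element of $\TT_{d-1,p}$. If the construction stalls at exactly this step, inspecting the sequences that trigger the stall would be the natural route to an explicit counterexample to the conjecture as stated.
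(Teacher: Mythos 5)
The statement you were asked to prove is a \emph{conjecture} that the paper actually \emph{refutes}: Theorem~\ref{1.3} (Corollary~\ref{Counter ex}) exhibits, for every prime $p$, a rational number $t_p\in\Acc(\TT_{2,p})\setminus\TT_{1,p}$. You ended your proposal by flagging exactly this possibility, and your instinct about where the argument would stall is essentially correct, so your hedge was well-placed. The honest answer is that there is no proof to compare against; the task is to locate precisely where your argument must break down, and the paper tells us.

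Your ultraproduct/catapower setup through step two is sound and is in fact what the paper uses: Lemma~\ref{cata fpt} shows that a limit $c=\lim_m\fpt(R,\D;\fa_m)$ is realized as $\fpt(\cata{R},\cata{\D};\fa)$ for $\fa=[\fa_m]_m$ in the catapower, and Proposition~\ref{cata cong} guarantees $\cata{R}$ is again $F$-finite regular of the same dimension. The failure is in your ``dimension reduction'' step, and the arithmetic culprit is precisely the one you suspected. The paper's mechanism (Theorem~\ref{accum characterize}, Lemma~\ref{isolated non-vert}, Theorem~\ref{accum on pair}) shows that an accumulation point forces the limiting ideal $\fa\subseteq\cata{R}$ to be ``of vertical type,'' which in turn forces the test ideal $\tau(\cata{R},\cata{\D},\fa^t)$ to vanish along a non-maximal prime, allowing localization to drop the dimension --- \emph{but only when $p$ does not divide the denominator of $t$}. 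When $p$ does divide the denominator, Lemma~\ref{isolated non-vert} has no bite, and indeed Proposition~\ref{making accumulation} shows how to manufacture accumulation points with $p$-divisible denominators out of a degree inequality $\fpt(A_\m;(f))<\sum a_i/D$. The explicit counterexamples ($\fpt=a_p/(pm)$ for $p\ge 5$, $2/45$ for $p=3$, $3/40$ for $p=2$) live exactly in this blind spot. The salvageable result, Theorem~\ref{1.4}, is $\Acc(\TT_{d,p})\cap\Z_{(p)}\subseteq\TT_{d-1,p}$, which is what your plan can actually deliver once the $\Z_{(p)}$ restriction is imposed so that Lemma~\ref{isolated non-vert} applies.
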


By focusing on the $F$-pure thresholds of principal ideals generated by homogeneous polynomials, we first give a sufficient condition for a rational number to be an accumulation point of the set $\TT_{d,p}$ in terms of the degree of the polynomial (Proposition \ref{making accumulation}).
Making use of this condition, we construct a counterexample to the conjecture.

\begin{thm}[Corollary \ref{Counter ex}]\label{1.3}
For any prime number $p>0$, Conjecture \ref{1.2} fails in $d=2$, that is, there exists a rational number $t_p$ such that 
\[
t_p \in \Acc(\TT_{2,p}) \setminus \TT_{1,p},
\]
where $\Acc(\TT_{2,p})$ denotes the set of all accumulation points of $\TT_{2,p}$.
\end{thm}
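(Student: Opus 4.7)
The plan is to deduce Theorem \ref{1.3} as a direct consequence of Proposition \ref{making accumulation}, once the set $\TT_{1,p}$ has been made explicit.

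First, I would observe that any $1$-dimensional $F$-finite regular local ring is a DVR, so for a proper principal ideal $(f)=(u\pi^n)$ with $\pi$ a uniformizer one reads off $\fpt(A;(f))=1/n$ directly from the definition. Hence
\[
\TT_{1,p} \,=\, \{\, 1/n : n \in \Z_{>0} \,\},
\]
and verifying $t_p \notin \TT_{1,p}$ reduces to checking that $t_p$ is not the reciprocal of a positive integer.

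Second, I would apply Proposition \ref{making accumulation}: the task is to exhibit a homogeneous polynomial $f_p \in \F_p[x,y]$ of a suitable degree whose $F$-pure threshold $t_p := \fpt(f_p)$ is not of the form $1/n$. Natural candidates in two variables are products of distinct linear forms, which yield thresholds like $2/m$ (non-reciprocal for odd $m \geq 3$), or Fermat-type binomials $x^a+y^b$, whose $F$-pure thresholds admit explicit formulas depending on $p$, $a$, $b$. If needed, multiplying $f_p$ by a high Frobenius power $g^{p^e}$ of an auxiliary homogeneous polynomial boosts the degree without changing $\fpt$, so the degree hypothesis of Proposition \ref{making accumulation} can be satisfied uniformly in $p$.

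The main obstacle is calibrating the choice of $f_p$ so that (i) it satisfies the numerical degree condition of Proposition \ref{making accumulation}, and (ii) $\fpt(f_p)$ is explicitly computable and provably not of the form $1/n$ for each prime $p$. In positive characteristic the threshold depends delicately on the base-$p$ expansion of the exponents, so the verification must be carried out prime by prime (or at least stratified by residue class of $p$ modulo the relevant exponents). This is the crux of the construction; once an appropriate $f_p$ has been produced, the counterexample follows from Proposition \ref{making accumulation} together with the explicit form of $\TT_{1,p}$ above.
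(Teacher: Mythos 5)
Your proposal correctly identifies the skeleton of the argument (compute $\TT_{1,p}$ explicitly, then invoke Proposition~\ref{making accumulation}), but it leaves out the part that is the actual content of the proof and contains one step that is simply wrong.

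\textbf{The missing hypothesis.} Proposition~\ref{making accumulation} has two conditions: $(a)$~$\fpt(A_\m;(f)) < \sum a_i/D$, and $(b)$~$\fpt(A_\m;(f)) \notin \Z[1/p]$. Your plan only worries about the degree condition~$(a)$ and about avoiding $1/n$; you never address~$(b)$. This is precisely where the difficulty lies, because the two conditions are in tension. For a ``generic'' homogeneous $f$ one has $\fpt = \sum a_i/D$, so~$(a)$ fails; to get a strict inequality one must force the $F$-pure threshold to drop, and when it drops Lemma~\ref{graded fpt} says it becomes a base-$p$ truncation $\qadic[p]{\sum a_i/D}{L}$ of the log canonical threshold, whose denominator is a power of $p$ --- so~$(b)$ fails. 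Your candidates (``products of distinct linear forms, which yield thresholds like $2/m$'') land exactly in this trap: if $\fpt = 2/m$ then $\fpt = \sum a_i/D$ and~$(a)$ fails, while if $\fpt < 2/m$ then $\fpt$ has a $p$-power denominator and~$(b)$ fails. The paper escapes this by a two-step device: first pin down a polynomial $f$ with $\fpt(f)$ \emph{equal} to a prescribed truncation $a_p/p$ (by choosing $u$ as a root of the coefficient polynomial $H$), then replace $f$ by $f^m$, which divides the threshold by $m$; for $m$ coprime to $a_p$ and not a power of $p$, the quotient $a_p/(pm)$ satisfies both~$(b)$ and $a_p/(pm) \notin \TT_{1,p}$, while the degree bound~$(a)$ is preserved.

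\textbf{The incorrect step.} You claim that ``multiplying $f_p$ by a high Frobenius power $g^{p^e}$ of an auxiliary homogeneous polynomial boosts the degree without changing $\fpt$.'' This is false: if $g$ is a non-constant homogeneous polynomial then $g \in \m$, so $(f g^{p^e}) \subsetneq (f)$ and the $F$-pure threshold genuinely drops (e.g.\ $\fpt(x) = 1$ while $\fpt(x y^p) = 1/p$). There is no version of this trick that raises the degree while fixing the threshold. The paper instead takes a power $f^m$, which \emph{does} change the threshold (to $\fpt(f)/m$), and exploits this change as the mechanism for satisfying~$(b)$.

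In short: your outline recognizes what needs to be produced but does not produce it, overlooks hypothesis~$(b)$ of the proposition it is invoking, and the one concrete device it offers (Frobenius-power padding) does not work. The separate constructions the paper gives for $p\geq 5$, $p=3$, and $p=2$ are not incidental bookkeeping but the substance of the result.
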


We next ask what additional restriction makes the conjecture true.
Since the denominator of the rational number $t_p$ in Theorem \ref{1.3} is divisible by $p$, it is natural to ask what if we consider $\Acc(\TT_{2,p}) \cap \Z_{(p)}$ instead of $\Acc(\TT_{2,p})$.
In this case, we obtain the following result.

\begin{thm}[Corollary \ref{Accum positive}]\label{1.4}
Let $p>0$ be a prime number and $d>1$ be an integer.
Then we have
\[
\Acc(\TT_{d,p}) \cap \Z_{(p)} \subseteq \TT_{d-1,p}.
\]
\end{thm}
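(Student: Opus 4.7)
The plan is to use a non-standard analysis argument via ultraproducts, as suggested by the macros \ulim, \ultra, \cata and related notation in the preamble. Fix an accumulation point $t \in \Acc(\TT_{d,p}) \cap \Z_{(p)}$, write $t = a/b$ with $\gcd(a,b) = 1$ and $p \nmid b$, and take a sequence of distinct rationals $t_n = \fpt(A_n,(f_n)) \in \TT_{d,p}$ converging to $t$. By the author's ACC result for $\TT_{d,p}$ from \cite{Sat19a, Sat19b}, the convergence must be from below, so $t_n < t$ for all large $n$.

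I would first form the ultraproduct $\ultra{A}$ of the rings $A_n$ with respect to a non-principal ultrafilter $\U$ on $\N$, together with the class $\ultra{f} \in \ultra{A}$ of $(f_n)$. By transfer, $\ultra{A}$ is internally an $F$-finite regular local ring of internal dimension $d$, and the ultralimit of the Frobenius invariants $\nu_e(f_n) := \max\{r : f_n^r \notin \m_n^{[p^e]}\}$ defines a non-standard analogue of the $F$-pure threshold of $\ultra{f}$ whose standard part equals $\ulim t_n = t$. Now I would exploit the arithmetic of $t$: because $p \nmid b$, the base-$p$ expansion of $t$ is eventually periodic, so the $p$-adic digits of the non-standard numerators $\nu_e$ stabilize at sufficiently large internal depth in a manner controlled by the period of $t$. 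This stability should let me locate a non-standard regular parameter $\ultra{x}$ inside $\ultra{A}$ pointing in the defect direction $t - t_n > 0$ such that the quotient $\ultra{A}/(\ultra{x})$ is a non-standard $F$-finite regular local ring of internal dimension $d-1$ and the image of $\ultra{f}$ still has non-standard $F$-pure threshold equal to $t$.

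Finally, since $t \in \Z_{(p)}$, its $p$-adic expansion has only finite information content, and this permits a standard-part extraction (the truncation construction hinted at by \cata and \catae) that promotes the internal datum $(\ultra{A}/(\ultra{x}), \ultra{f} \bmod \ultra{x})$ to a genuine standard $F$-finite regular local ring $B$ of dimension $d-1$ with an element $g \in B$ realizing $\fpt(B,(g)) = t$, so $t \in \TT_{d-1,p}$. The main obstacle I expect is constructing the non-standard regular parameter $\ultra{x}$ so that passage to the quotient preserves the non-standard $F$-pure threshold; the hypothesis $p \nmid b$ is indispensable there, because without it the $p$-adic behavior of $t$ cannot be captured by any finite-depth truncation, reflecting exactly the failure exhibited by the counterexample of Theorem \ref{1.3}.
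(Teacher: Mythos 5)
Your instinct to go through an ultraproduct is right (it is exactly the non-standard mechanism the paper uses, via the catapower $\cata{R}$), and the observation that the hypothesis $t\in\Z_{(p)}$ is the crucial arithmetic input is also right. But two of the load-bearing steps in your sketch are wrong or missing.

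First, the direction of convergence. The ascending chain condition forbids a strictly increasing sequence in $\TT_{d,p}$, so if $t_n\ne t$ and $t_n\to t$ then, after passing to a subsequence, the $t_n$ must be \emph{strictly greater} than $t$ (decreasing to it), not less. You have the sign backwards, and this matters because the whole point of the notion ``vertical type'' in the paper is to package the information that the approximating $F$-pure thresholds sit strictly above $t$; this is what Theorem~\ref{accum characterize} extracts from the sequence $\{t_n>t\}$.

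Second, and more seriously, your dimension-reduction mechanism is the wrong one. You propose to find a regular parameter $\ultra{x}\in\ultra{A}$ and pass to the quotient $\ultra{A}/(\ultra{x})$, hoping the $F$-pure threshold of $\ultra{f}\bmod\ultra{x}$ is still $t$. Restriction to a hypersurface does not in general preserve $F$-pure thresholds (one only has the inequality $\fpt(R/(x);\bar f)\le\fpt(R;f)$), and there is no mechanism in your sketch that forces equality. The paper's reduction is by \emph{localization}, not quotienting: it shows, via Lemma~\ref{isolated non-vert}, that when $t\in\Z_{(p)}$ (denominator prime to $p$) and the principal ideal $\fa\subseteq\cata{R}$ realizing $t$ is of vertical type, the test ideal $\tau(\cata{R},\cata{\D},\fa^t)$ cannot be $\cata{\m}$-primary. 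One then localizes at a non-maximal prime $\p$ in its support; because test ideals commute with localization (Lemma~\ref{test basic}(6)), the $F$-pure threshold is preserved exactly, and $\dim(\cata{R})_\p<d$. This is what makes $t$ land in $\TT_{d-1,p}$. Your sketch never invokes test ideals at all, and the hand-waved ``stability of $p$-adic digits of the non-standard numerators'' does not by itself produce either a hypersurface or a prime at which to localize; the arithmetic hypothesis on $t$ enters the actual proof only through the $\m$-primarity argument in Lemma~\ref{isolated non-vert}, which has no analogue in your outline.
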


In the second half of this paper, we study another conjecture which was also proposed in \cite{BMS09}.
Let $\overline{\F}_p$ be an algebraic closure of the finite field $\F_p$ and $\TT_{d,p}^{\circ}$ be the set of all $F$-pure thresholds $\fpt(A; (f))$ of a principal ideal $(f)$ of the regular local ring $A:=(\overline{\F}_p[x_1,\dots,x_d])_{(x_1,\dots,x_d)}$.
Blickle, Musta\c{t}\u{a} and Smith proved that the closure of $\TT_{d,p}^{\circ}$ in $\R$ coincides with the set $\TT_{d,p}$ and furthermore, made the following conjecture.

\begin{conj}\label{1.5}
For a prime number $p>0$ and an integer $d>0$, we have
\[
\TT_{d,p} = \TT_{d,p}^{\circ}.
\]
\end{conj}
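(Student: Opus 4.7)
The plan is to prove the non-trivial inclusion $\TT_{d,p} \subseteq \TT_{d,p}^{\circ}$ in stages.

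\textbf{Step 1 (reduction to a formal power series ring over $\overline{\F}_p$).} Given an $F$-finite regular local ring $(A,\m)$ of dimension $d$ and $f \in A$, the Cohen structure theorem gives $\hat{A} \cong k[[x_1,\ldots,x_d]]$ for some $F$-finite field $k$, and the faithfully flat map $A \to \hat{A}$ preserves $\fpt$ of principal ideals, so one may assume $A = k[[x_1,\ldots,x_d]]$. To replace $k$ by $\overline{\F}_p$, I would use the non-standard extension machinery suggested by the paper's keywords: choose a $p$-basis of $k$ over $\overline{\F}_p$, encode $f$ as a sequence of polynomials over $\overline{\F}_p$ via specialization of the transcendence parameters, and take an ultraproduct whose $F$-pure threshold coincides with that of $f$.

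\textbf{Step 2 (truncation to a polynomial).} With $f \in R := \overline{\F}_p[[x_1,\ldots,x_d]]$ and $\m$ its maximal ideal, recall $\nu_e(f) := \max\{n : f^n \notin \m^{[p^e]}\}$ and $\fpt(f) = \lim_e \nu_e(f)/p^e$. Writing $f_{\le N}$ for the truncation of $f$ in degree $\le N$ and using the containment $\m^{d(p^e-1)+1} \subseteq \m^{[p^e]}$, one checks that $f^n - f_{\le N}^n \in \m^{N+1} \subseteq \m^{[p^e]}$ whenever $N \ge d(p^e-1)$, so $\nu_e(f) = \nu_e(f_{\le N})$ for all such $N$. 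If in addition $\fpt(f) = \nu_{e_0}(f)/p^{e_0}$ is attained at some finite $e_0$, then taking $N \ge d(p^{e_0}-1)$ produces a polynomial $g := f_{\le N} \in \overline{\F}_p[x_1,\ldots,x_d]$ with $\fpt(g) = \fpt(f)$, completing the proof in this case.

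\textbf{Main obstacle and partial answer.} The hard part is that $\fpt(f)$ need not be attained at any finite $e$, and $\fpt(f_{\le N})$ may strictly exceed $\fpt(f)$ for every $N$. An unconditional proof appears to require a finiteness principle saying that $\fpt$ of a formal power series is determined by finitely many of its coefficients, which seems beyond what elementary truncation can give. For a partial affirmative answer, I would restrict attention to $F$-pure thresholds lying in $\Z_{(p)}$—in parallel with Theorem \ref{1.4}—since in that regime one expects $\nu_e(f)/p^e$ to stabilize at $\fpt(f)$ for all sufficiently large $e$, whence the truncation argument of Step 2 applies and exhibits the required polynomial in $\overline{\F}_p[x_1,\ldots,x_d]$.
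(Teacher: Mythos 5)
Your Step 2 truncation lemma is correct as far as it goes: for fixed $e$ and $N \ge d(p^e-1)$ one has $\nu_e(f) = \nu_e(f_{\le N})$, so the truncation preserves the $e$-th truncated $F$-pure threshold. And you correctly identify the crux: one needs a finiteness principle saying $\fpt(f)$ is determined by finitely many coefficients. But your proposed workaround for the $\Z_{(p)}$ case fails. By Lemma \ref{fpt qadic} together with Lemma \ref{qadic}~(1)(c), $\nu_e(f)/p^e = \qadic[p]{\fpt(f)}{e} < \fpt(f)$ with \emph{strict} inequality for every $e$; the sequence $\nu_e(f)/p^e$ can never stabilize at $\fpt(f)$ unless $\fpt(f) \in \Z[1/p]$, and $\Z_{(p)} \cap \Z[1/p] = \Z$. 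So for a non-integral $\fpt(f) \in \Z_{(p)}$ the stabilization you ``expect'' never occurs, and the truncation argument does not close the gap. Step 1 is also not actually a reduction: specializing transcendence parameters and taking an ultraproduct gives a sequence $f_m$ over $\overline{\F}_p$ with $\lim_m \fpt(f_m) = \fpt(f)$, which only re-proves that $\fpt(f)$ lies in the closure of $\TT_{d,p}^{\circ}$ (Lemma \ref{T basic}~(2)); it does not produce a single polynomial over $\overline{\F}_p$ realizing $\fpt(f)$.

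The paper's route to the partial result $\TT_{d,p} \cap \Z_{(p)} \subseteq \TT_{d,p}^{\circ}$ (Corollary \ref{fpt polynomial}) is entirely different and goes \emph{through} the accumulation-point theorem rather than around it: one writes $\TT_{d,p} = \Acc(\TT_{d,p}^{\circ}) \cup \TT_{d,p}^{\circ} \subseteq \Acc(\TT_{d,p}) \cup \TT_{d,p}^{\circ}$, applies Theorem \ref{1.4} (whose proof uses catapowers, Lemma \ref{isolated non-vert} on vertical-type ideals, and localization of test ideals) to control $\Acc(\TT_{d,p}) \cap \Z_{(p)}$, and then inducts on $d$. For $d=2$ the paper does prove the ``finiteness principle'' you are missing, but only in that dimension: Proposition \ref{stability of region} shows that if the reduced factorization of $f$ defines a sharply $F$-pure triple outside $\m$, then the entire strongly $F$-regular region of the factors is unchanged after perturbing them modulo $\m^M$ for some finite $M$. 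This uses test-ideal manipulations (Lemma \ref{test basic}~(4),(5), the map $\phi^e_\Delta$, and Nakayama), not the elementary $\nu_e$ computation. You would need this kind of test-ideal-theoretic finiteness, not truncation of the base-$p$ expansion, to make your Step 2 unconditional.
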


Since $F$-pure thresholds of polynomials are sometimes easier to study than those of formal power series (\cite{BMS08}, \cite{BSTZ10}), the above conjecture may be helpful for further understanding of the set $\TT_{d,p}$.
As a corollary of Theorem \ref{1.4}, we give an affirmative answer to the conjecture after restricting to $\Z_{(p)}$.

\begin{cor}[Corollary \ref{fpt polynomial}]\label{1.6}
For a prime number $p>0$ and an integer $d>0$, we have
\[
\TT_{d,p} \cap \Z_{(p)} \subseteq \TT_{d,p}^{\circ}
\]
\end{cor}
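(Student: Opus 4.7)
The plan is to establish Corollary~\ref{1.6} by induction on $d$, feeding Theorem~\ref{1.4} into the density identity $\TT_{d,p}=\overline{\TT_{d,p}^{\circ}}$ of Blickle--Musta\c{t}\u{a}--Smith recalled just above the statement. The base case $d=1$ is essentially free: every $F$-finite $1$-dimensional regular local ring of characteristic $p$ is a DVR, and the $F$-pure threshold of $(u\pi^n)$ for a uniformizer $\pi$ is $1/n$. The same formula holds in $\overline{\F}_p[x]_{(x)}$, so $\TT_{1,p}=\TT_{1,p}^{\circ}=\{1/n:n\ge 1\}$ and the inclusion is automatic.

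For the inductive step, let $t\in \TT_{d,p}\cap \Z_{(p)}$ and assume the corollary in dimension $d-1$. Since $t\in \overline{\TT_{d,p}^{\circ}}$, either $t\in \TT_{d,p}^{\circ}$ already (and we are done), or $t$ is an accumulation point of $\TT_{d,p}^{\circ}\subseteq \TT_{d,p}$, so that $t\in \Acc(\TT_{d,p})\cap\Z_{(p)}$. In the latter case, Theorem~\ref{1.4} yields $t\in \TT_{d-1,p}$, and the inductive hypothesis upgrades this to $t\in \TT_{d-1,p}^{\circ}$. Thus $t=\fpt(A';(f))$ for some non-unit $f\in A':=\overline{\F}_p[x_1,\dots,x_{d-1}]_{(x_1,\dots,x_{d-1})}$, and viewing $f$ inside $A:=\overline{\F}_p[x_1,\dots,x_d]_{(x_1,\dots,x_d)}$ reduces everything to the claim $\fpt(A;(f))=\fpt(A';(f))$.

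That last identity is the only non-formal step, and it is routine: writing $\m,\m'$ for the respective maximal ideals, one has $\m A=(\m',x_d)$, and since $f^r$ involves no $x_d$, the invariants $\nu_e(f):=\max\{r:f^r\notin \m^{[p^e]}\}$ satisfy $\nu_e^{A}(f)=\nu_e^{A'}(f)$ for every $e\ge 0$; passing to $\lim_{e\to\infty}p^{-e}\nu_e(f)$ gives the claim. Essentially no genuine obstacle remains once Theorem~\ref{1.4} is in hand; the mild subtlety is to induct on $d$, since Theorem~\ref{1.4} alone only places $t$ in $\TT_{d-1,p}$, whereas the target is $\TT_{d,p}^{\circ}$, and the induction is precisely what allows the transfer between the two.
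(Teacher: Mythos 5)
Your proof is correct and follows essentially the same route as the paper's: induct on $d$ using Lemma \ref{T basic} (1) as the base, decompose $\TT_{d,p}=\overline{\TT_{d,p}^{\circ}}=\Acc(\TT_{d,p}^{\circ})\cup\TT_{d,p}^{\circ}\subseteq\Acc(\TT_{d,p})\cup\TT_{d,p}^{\circ}$, apply Theorem \ref{1.4}, and finish with the inclusion $\TT_{d-1,p}^{\circ}\subseteq\TT_{d,p}^{\circ}$. The only cosmetic difference is that you re-derive this last inclusion via the $\nu_e$ invariants, whereas the paper cites it as Lemma \ref{T basic} (5) (whose proof is the same computation).
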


In fact, since the closure $\overline{S}$ of a subset $S \subseteq \R$ coincides with $\Acc(S) \cup S$, we have 
\[
\TT_{d,p} = \overline{\TT_{d,p}^{\circ}} = \Acc(\TT_{d,p}^{\circ}) \cup \TT_{d,p}^{\circ} \subseteq \Acc(\TT_{d,p}) \cup \TT_{d,p}^{\circ}.
\]
Combining with Theorem \ref{1.4}, we have 
\[
\TT_{d,p} \cap \Z_{(p)} \subseteq (\TT_{d-1, p} \cap \Z_{(p)}) \cup \TT_{d,p}^{\circ}.
\]
Then Theorem \ref{1.6} follows from induction on $d$.

Finally, we again consider Conjecture \ref{1.5} without restricting to $\Z_{(p)}$.
In this case, the above argument cannot work due to the pathologies encountered in Theorem \ref{1.3}.
However, we give an affirmative answer in the case where $d=2$.

\begin{thm}[Corollary \ref{two-dim}]\label{1.7}
For any prime number $p>0$, Conjecture \ref{1.5} is true in $d=2$, that is, we have
\[
\TT_{2,p} = \TT_{2,p}^{\circ}.
\]
\end{thm}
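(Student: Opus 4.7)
The plan is to prove that $\TT_{2,p}^{\circ}$ is closed in $\R$; combined with the Blickle--Musta\c{t}\u{a}--Smith identity $\TT_{2,p} = \overline{\TT_{2,p}^{\circ}}$ recorded just before the statement, this immediately yields $\TT_{2,p} = \TT_{2,p}^{\circ}$, since $\overline{S} = S \cup \Acc(S)$ for any $S \subseteq \R$. The task thus reduces to the inclusion $\Acc(\TT_{2,p}^{\circ}) \subseteq \TT_{2,p}^{\circ}$. Fix $t \in \Acc(\TT_{2,p}^{\circ})$; then $t \in \TT_{2,p}$ by the closure identity, and $t \in \Acc(\TT_{2,p})$ because $\TT_{2,p}^{\circ} \subseteq \TT_{2,p}$.

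First I would split on whether $t \in \Z_{(p)}$. If $t \in \Z_{(p)}$, then $t \in \TT_{2,p} \cap \Z_{(p)} \subseteq \TT_{2,p}^{\circ}$ by Corollary \ref{1.6} applied in dimension two, and we are done. The substantive case is when the denominator of $t$ is divisible by $p$; there Theorem \ref{1.4} and the bootstrap leading to Corollary \ref{1.6} give no information, and Theorem \ref{1.3} already warns that accumulation points of $\TT_{2,p}$ outside $\Z_{(p)}$ genuinely occur.

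To handle the remaining case I would exploit the 2-dimensional nature of the problem directly. Write $t = \fpt(A;(f))$ with $A$ an $F$-finite regular local ring of dimension $2$ and $f \in A$. The first step is to reduce to $A = \overline{\F}_p[[x,y]]$ by completing and extending the residue field to $\overline{\F}_p$, both of which preserve the $F$-pure threshold. In two variables one expects the FPT of $f$ to be determined by a bounded-order truncation in a suitable normal form (via, for instance, Weierstrass preparation followed by Newton-polygon analysis, or via the homogeneous-polynomial techniques underlying Proposition \ref{making accumulation}); truncating $f$ then produces a polynomial $g \in \overline{\F}_p[x,y]$ with $\fpt((\overline{\F}_p[x,y])_{(x,y)};(g)) = t$, so that $t \in \TT_{2,p}^{\circ}$. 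The main obstacle is exactly this last step --- showing that in dimension two every FPT of a formal power series is realized by a polynomial, including when $p$ divides the denominator --- and may well require combining the non-standard extension machinery hinted at by the paper's keywords with the homogeneous-polynomial constructions that produce the counterexample in Theorem \ref{1.3}. In higher dimension the analogous step presumably fails, which is consistent with the restriction $d = 2$ in the statement.
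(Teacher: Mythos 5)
Your plan has the right overall shape---reduce $\TT_{2,p}$ to an $F$-pure threshold computed in $\overline{\F}_p[[x,y]]$, then argue that a sufficiently high truncation of the power series to a polynomial preserves the $F$-pure threshold---but the step you flag as ``the main obstacle'' is precisely the non-trivial content, and you do not supply it. The case split on $t\in\Z_{(p)}$ is also unnecessary; the paper's argument handles all values of $t$ at once, so the appeal to Corollary \ref{1.6} buys nothing.

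What actually closes the gap is Proposition \ref{stability of region}. One factors $f=\prod_{i=1}^l f_i^{n_i}$ into irreducibles in $\widehat{A}=k[[x,y]]$. The hypothesis of that proposition is that the triple $(\widehat{A},0,\prod_i(f_i)^1)$ be sharply $F$-pure \emph{outside} $\m$; this is exactly where $d=2$ enters, because the punctured spectrum consists only of codimension-one points, at each of which at most one $(f_i)$ localizes to the maximal ideal of a DVR, so the triple is sharply $F$-pure there. (In higher dimension ``outside $\m$'' includes codimension $\ge 2$, where this fails in general.) This sharp $F$-purity forces the test ideal $\tau(\widehat{A},0,(f_1\cdots f_l)^{1-\epsilon})$ to be $\m$-primary, and the proposition then runs an induction using Lemma \ref{test basic}(4)--(5) and a Nakayama argument to show that replacing each $f_i$ by any $g_i\equiv f_i\pmod{\m^M}$ (for $M$ depending only on the $f_i$, not on the exponents) leaves \emph{every} test ideal $\tau(\widehat{A},0,\prod_i(f_i)^{t_i})$ with $0\le t_i<1$ unchanged, hence the entire strongly $F$-regular region. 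Choosing the $g_i$ to be polynomial truncations and setting $g=\prod g_i^{n_i}$ gives $\fpt(\widehat{A};(f))=\fpt(\widehat{A};(g))=\fpt(A;(g))\in\TT_{2,p}^\circ$, using Remark \ref{SFRR and fpt}, Lemma \ref{test basic}(7), and Lemma \ref{T basic}(3). Your Weierstrass/Newton-polygon suggestion would not obviously produce this uniform truncation bound for all exponent vectors simultaneously; the factorization-plus-test-ideal argument is the tool that does.
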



\begin{small}
\begin{acknowledgement}
The author wishes to express his gratitude to Professor Shunsuke Takagi for his encouragement, valuable advice and suggestions.
This work was supported by RIKEN iTHEMS Program.
\end{acknowledgement}
\end{small}

\begin{convention}
Throughout this paper, all rings are assumed to be commutative and with a unit element. 
\end{convention}

\section{Preliminaries}

\subsection{$F$-pure thresholds}
A ring $R$ of characteristic $p>0$ is said to be \emph{$F$-finite} if the Frobenius morphism $F: R \to R$ is a finite ring homomorphism.
If $R$ is an $F$-finite Noetherian normal local ring, then $R$ is excellent (\cite{Kun76}) and $X=\Spec(R)$ has a \emph{canonical divisor} $K_X$ (see for example \cite[p.4]{ST}).

Through this paper, all rings will be assumed to be $F$-finite and of characteristic $p>0$.

\begin{defn}
A \emph{pair} $(R, \D)$ consists of an $F$-finite Noetherian normal local ring $(R, \m)$ and an effective $\Q$-Weil divisor $\D$ on $\Spec R$.
A \emph{triple} $(R, \D, \fa_\bullet^{t_\bullet} = \prod_{i=1}^m \fa_i^{t_i})$, consists of a pair $(R, \D)$ and a symbol $\fa_\bullet^{t_\bullet}= \prod_{i=1}^m \fa_i^{t_i}$, where $m>0$ is an integer, $\fa_1, \dots, \fa_m \subseteq R$ are ideals, and $t_1, \dots, t_m \ge 0$ are real numbers. 
\end{defn}

\begin{defn}
Let $(R,\D, \fa_\bullet^{t_\bullet} = \prod_{i=1}^m \fa_i^{t_i})$ be a triple.
\begin{enumerate}
\item[$(1)$]
$(R, \D, \fa_\bullet^{t_\bullet})$ is said to be \emph{sharply $F$-pure} if there exist an integer $e>0$ and a morphism $\phi \in \Hom_R(F^e_* R( \age{(p^e-1) \Delta}), R)$ such that 
\[
\phi( F^e_* (\prod_{i=1}^m \fa_i^{\age{t_i (p^e-1)}} ))=R.
\]
\item[$(2)$]
$(R, \D, \fa_\bullet^{t_\bullet})$ is said to be \emph{strongly $F$-regular} if for every non-zero element $c \in R$, there exist an integer $e>0$ and a morphism $\phi \in \Hom_R(F^e_* R( \age{(p^e-1) \Delta}), R)$ such that 
\[
\phi( F^e_* (c \prod_{i=1}^m \fa_i^{\age{t_i (p^e-1)}} ))=R.
\]
\item[$(3)$]
A pair $(R,\D)$ is said to be \emph{sharply $F$-pure} (resp. \emph{strongly $F$-regular}) if so is the triple $(R, \D, R^0)$.
\item[$(4)$] 
An $F$-finite Noetherian normal local ring $R$ is \emph{sharply $F$-pure} (resp. \emph{strongly $F$-regular}) if so is the pair $(R, 0)$.
\end{enumerate}
\end{defn}

\begin{defn}\label{fpt def}
Let $(R, \D, \fa_{\bullet}^{t_\bullet})$ be a sharply $F$-pure triple and $\fb \subsetneq R$ be a non-zero proper ideal. 
\begin{enumerate}
\item[$(1)$] We define the \emph{$F$-pure threshold} of $\fb$ with respect to $(R,\D, \fa_{\bullet}^{t_\bullet})$ as
\[
\fpt(R, \D, \fa_\bullet^{t_\bullet}; \fb) := \sup\left\{ s \ge 0 \mid (R, \D, \fa_{\bullet}^{t_\bullet} \fb^s) \textup{ is sharply $F$-pure} \right\} \in \R_{\ge 0}.
\]
If $\fb$ is the zero ideal, then we define $\fpt(R,\D,\fa_\bullet^{t_\bullet}; \fb) =0$.
\item[$(2)$] For a sharply $F$-pure pair $(R,\D)$, we define $\fpt(R,\D;\fb) : = \fpt(R,\D,R^0; \fb)$.
Moreover, if $\D=0$, we simply denote it by $\fpt(R;\fb)$.
\end{enumerate}
\end{defn}

\begin{lem}\label{F-sing basic}
Let $(R, \D, \fa_\bullet^{t_\bullet})$ be a strongly $F$-regular triple and $\fb \subsetneq R$ be an ideal.
Then we have 
\[
\fpt(R, \D, \fa_\bullet^{t_\bullet}; \fb) := \sup\left\{ s \ge 0 \mid (R, \D, \fa_{\bullet}^{t_\bullet} \fb^s) \textup{ is strongly $F$-regular} \right\}.
\]
\end{lem}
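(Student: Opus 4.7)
The plan is to prove the equality of the two suprema by showing both inequalities. One direction is immediate: since any strongly $F$-regular triple is sharply $F$-pure, whenever $(R,\Delta,\fa_\bullet^{t_\bullet}\fb^s)$ is strongly $F$-regular it is in particular sharply $F$-pure, so the displayed supremum is at most $\fpt(R,\Delta,\fa_\bullet^{t_\bullet};\fb)$.

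For the harder direction I would fix $s$ with $0\le s<\fpt(R,\Delta,\fa_\bullet^{t_\bullet};\fb)$ and verify that $(R,\Delta,\fa_\bullet^{t_\bullet}\fb^s)$ is strongly $F$-regular. The key trick is to pick an intermediate $s'$ with $s<s'<\fpt$; the gap $s'-s>0$ is what will let us convert a sharp $F$-purity splitting at $s'$ into a strong $F$-regularity splitting at $s$. By definition $(R,\Delta,\fa_\bullet^{t_\bullet}\fb^{s'})$ is sharply $F$-pure, producing $e_1>0$, a map $\phi_1\in\Hom_R(F^{e_1}_* R(\age{(p^{e_1}-1)\Delta}),R)$, and a witness $y\in\prod_i\fa_i^{\age{t_i(p^{e_1}-1)}}\fb^{\age{s'(p^{e_1}-1)}}$ with $\phi_1(F^{e_1}_* y)=1$. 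The standard iteration gives $\phi_1^{(n)}$ at level $ne_1$ together with some $y_n\in\prod_i\fa_i^{\age{t_i(p^{ne_1}-1)}}\fb^{\age{s'(p^{ne_1}-1)}}$ satisfying $\phi_1^{(n)}(F^{ne_1}_* y_n)=1$. Separately, applying strong $F$-regularity of the base triple $(R,\Delta,\fa_\bullet^{t_\bullet})$ to an arbitrary nonzero $c\in R$ gives $e_2>0$, $\phi_2\in\Hom_R(F^{e_2}_* R(\age{(p^{e_2}-1)\Delta}),R)$, and $z_c\in c\prod_i\fa_i^{\age{t_i(p^{e_2}-1)}}$ with $\phi_2(F^{e_2}_* z_c)=1$. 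The candidate splitting is $\phi:=\phi_1^{(n)}\circ F^{ne_1}_*\phi_2$ at level $ne_1+e_2$, evaluated on $u:=y_n^{p^{e_2}}z_c$. By $R$-linearity this collapses to $\phi(F^{ne_1+e_2}_* u)=\phi_1^{(n)}(F^{ne_1}_*(y_n\cdot\phi_2(F^{e_2}_* z_c)))=\phi_1^{(n)}(F^{ne_1}_* y_n)=1$, so everything reduces to checking that $u$ lies in the ideal $c\prod_i\fa_i^{\age{t_i(p^{ne_1+e_2}-1)}}\fb^{\age{s(p^{ne_1+e_2}-1)}}$ which witnesses strong $F$-regularity of $(R,\Delta,\fa_\bullet^{t_\bullet}\fb^s)$ at $c$.

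The main obstacle, and the only step that actually uses $s'>s$, is the containment on $\fb$. The exponent on $\fb$ coming from $u$ is $p^{e_2}\age{s'(p^{ne_1}-1)}$, and one must show this is at least $\age{s(p^{ne_1+e_2}-1)}$. A direct estimate gives
\[
p^{e_2}s'(p^{ne_1}-1)-s(p^{ne_1+e_2}-1)=(s'-s)p^{ne_1+e_2}-(s'p^{e_2}-s),
\]
which tends to $+\infty$ as $n\to\infty$ because $s'>s$, so the required inequality holds for all sufficiently large $n$. The parallel containment on each $\fa_i$ follows from the routine estimate $p^{e_2}\age{t_i(p^{ne_1}-1)}+\age{t_i(p^{e_2}-1)}\ge t_i(p^{ne_1+e_2}-1)$, and the divisor module $\age{(p^{ne_1+e_2}-1)\Delta}$ for the composite $\phi$ is produced by the standard composition formalism for sharply $F$-pure maps. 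Beyond this bookkeeping the argument is essentially the gap argument with parameter $s'-s$; choosing $n$ large enough completes the proof.
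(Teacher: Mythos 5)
Your argument is correct and takes the same route as the paper, which simply refers to the analogous proof of \cite[Proposition~2.2(5)]{TW04}: the easy containment from strong $F$-regularity implying sharp $F$-purity, then the gap argument with an intermediate $s'$ that composes an iterated $F$-purity splitting at $s'$ with a strong $F$-regularity splitting for the base triple, with the exponent estimate on $\fb$ relying on $(s'-s)p^{ne_1+e_2}\to\infty$. Your details (the iterated witness $y_n$, the composite $\phi_1^{(n)}\circ F^{ne_1}_*\phi_2$, and the ceiling inequalities) are exactly what that reference's argument requires, modulo the routine divisor-twist bookkeeping which you correctly defer to the standard formalism.
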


\begin{proof}
The proof is similar to that of \cite[Proposition 2.2 (5)]{TW04}.
\end{proof}

\subsection{Test ideals}

In this subsection, we recall the definition and some basic properties of test ideals.

\begin{defn}
Let $(R, \D,  \fa_\bullet^{t_\bullet} = \prod_{i=1}^m \fa_i^{t_i})$ be a triple.
An ideal $J \subseteq R$ is \emph{uniformly $(\D,  \fa_\bullet^{t_\bullet} , F)$-compatible} if 
$\phi( F^e_* (\fa_1^{\age{t_1 (p^e-1)}}  \cdots \fa_m^{\age{t_m (p^e-1)}} J)) \subseteq J$ 
for every $e \ge 0$ and every $\phi \in \Hom_R(F^e_* R(\age{(p^e-1)\D}), R)$.
\end{defn}

\begin{defn}\label{test def}
Let $(R, \D,  \fa_\bullet^{t_\bullet} =\prod_{i=1}^m \fa_i^{t_i})$ be a triple.
Assume that $\fa_1, \dots, \fa_m$ are non-zero ideals.
Then we define the \emph{test ideal} 
\[
\tau(R, \D,  \fa_\bullet^{t_\bullet} )=\tau(R,\D, \prod_{i=1}^m \fa_i^{t_i}) = \tau(R,\D, \fa_1^{t_1} \cdots \fa_m^{t_m})
\] to be an unique minimal non-zero uniformly $(\D,  \fa_\bullet^{t_\bullet}, F)$-compatible ideal.
The test ideal always exists (see \cite[Theorem 6.3]{Sch10}).

For a pair $(R,\D)$, we define $\tau(R,\D) : = \tau(R,\D,R^0)$.
\end{defn}

\begin{defn}\label{phi}
Let $(X=\Spec R, \D)$ be a pair and $e \ge 0$ be an integer.
Assume that $(p^e-1)(K_X+\D)$ is Cartier.
Then there exists an isomorphism
\[
\Hom_R(F^e_*( R((p^e-1)\D)) , R) \cong F^e_*R
\]
as $F^e_*R$-modules (see for example \cite[Lemma 3.1]{Sch09}).
We denote  by $\phi_\D^e$ a generator of $\Hom_R(F^e_*( R((p^e-1)\D)), R)$ as an $F^e_*R$-module.
\end{defn}

\begin{rem}
Although a map $\phi_\D^e : F^e_*R \to R$ is not uniquely determined, it is unique up to multiplication by $F^e_*R^\times$. When we consider this map, we only need the information about the image of this map.
Hence we ignore the multiplication by $F^e_* R^\times$.
\end{rem}

We now list some basic properties of test ideals.

\begin{lem}\label{test basic}
Let $(R,\D, \fa_\bullet^{t_\bullet}=\prod_{i=1}^m \fa_i^{t_i})$ be a triple.
Then the following hold.
\begin{enumerate}
\item[$(1)$] \textup{(\cite{Tak04}, cf. \cite[Proposition 3.23]{Sch11})} $\tau(R,\D, \fa_\bullet^{t_\bullet})=R$ if and only if the triple $(R,\D, \fa_\bullet^{t_\bullet})$ is strongly $F$-regular.
\item[$(2)$] Let $\D' \ge \D$ be another $\Q$-Weil divisor, $t'_i \ge t_i$ be another real numbers, and $\fa_i' \subseteq \fa_i$ be another ideals.
Then we have 
\[
\tau(R, \D', \prod_{i=1}^m (\fa')_i^{t'_i}) \subseteq \tau(R, \D , \fa_\bullet^{t_\bullet} ).
\]
\item[$(3)$] There exists a real number $\epsilon>0$ such that for every $t_i \le t_i' < t_i+\epsilon$, we have
\[
\tau(R, \D, \prod_{i=1}^m \fa_i^{t'_i}) = \tau(R, \D , \fa_\bullet^{t_\bullet} ).
\]

\item[$(4)$] \textup{(\cite[Theorem 4.2]{HT04}, cf. \cite[Lemma 3.26]{BSTZ10})} Let $\fb$ be an ideal generated by $l$ elements and $l \le s$ be a real number.
Then one has
\[
\tau(R, \D, \fa_\bullet^{t_\bullet} \fb^s)= \fb \tau(R, \D, \fa_{\bullet}^{t_{\bullet}} \fb^{s-1}).
\]
\item[$(5)$] \textup{(cf. \cite[Lemma 4.2, 4.4 (b)]{ST14b})} If $(p^e-1)(K_R+\D)$ is Cartier, then we have 
\[
\phi_{\D}^e ( F^e_* \tau( R, \D, \fa_{\bullet}^{t_\bullet}) ) = \tau(R, \D, \prod_{i=1}^m \fa_i^{t_i/p^e}).
\]
\item[$(6)$] \textup{(\cite[Proposition 3.23]{Sch11}, cf. \cite[Proposition 3.1]{HT04})} For any prime ideal $\p \subseteq R$, we have 
\[
\tau(R,\D,\fa_\bullet^{t_\bullet}) \cdot R_\p = \tau(R_\p, \D|_{R_\p}, \prod_{i=1}^m (\fa_i \cdot R_\p)^{t_i}),
\]
where $\D|_{R_{\p}}$ is the flat pullback of $\D$ to $\Spec R_\p$.
\item[$(7)$] \textup{(cf. \cite[Proposition 3.2]{HT04})} Let $\widehat{R}$ be the $\m$-adic completion. Then we have 
\[
\tau(R,\D,\fa_\bullet^{t_\bullet}) \cdot \widehat{R} = \tau(\widehat{R}, \D|_{\widehat{R}}, \prod_{i=1}^m (\fa_i \cdot \widehat{R})^{t_i}),
\]
where $\D|_{\widehat{R}}$ is the flat pullback of $\D$ to $\Spec \widehat{R}$.

\end{enumerate}
\end{lem}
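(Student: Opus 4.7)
The plan is to verify each of the seven parts separately, noting that (1)--(3) should come directly from the definition of the test ideal as the unique minimal non-zero uniformly $(\D, \fa_\bullet^{t_\bullet}, F)$-compatible ideal together with Noetherian considerations, while (4)--(7) will be established by the cited arguments from \cite{HT04}, \cite{BSTZ10}, \cite{ST14b} and \cite{Sch11}; the point of the lemma is to gather what we will actually use, so no new content is produced, but the organization must be correct.

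For (1), if $\tau(R,\D,\fa_\bullet^{t_\bullet})=R$, then $1$ lies in the test ideal, so by minimality among uniformly compatible ideals one can, for every nonzero $c$, exhibit $e$ and $\phi$ whose image contains $1$; conversely, the ideal generated over all $e$ and all $\phi$ by the elements $\phi(F^e_*(c\prod \fa_i^{\age{t_i(p^e-1)}}))$ is uniformly compatible and strong $F$-regularity forces it to equal $R$. For (2), any uniformly $(\D',\prod(\fa_i')^{t_i'},F)$-compatible ideal is automatically uniformly $(\D,\fa_\bullet^{t_\bullet},F)$-compatible, because every relevant $\phi$ lives in the smaller Hom and the ideals inside the Frobenius push-forward shrink, so the minimum on the right is at most that on the left. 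For (3), only finitely many ideals appear as $\tau(R,\D,\prod \fa_i^{t_i'})$ when $(t_1',\dots,t_m')$ ranges in a bounded box of $\R_{\ge 0}^m$ (ascending chain condition on ideals combined with part (2)), whence the right-continuity at $(t_1,\dots,t_m)$.

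For (4), I would reproduce the Skoda-type argument: when $\fb=(b_1,\dots,b_l)$, Matlis-type duality against the $l$-generated ideal combined with a direct computation using $\fb^{s} \subseteq \fb \cdot \fb^{s-1}$ yields $\tau(R,\D,\fa_\bullet^{t_\bullet}\fb^s)\subseteq \fb\,\tau(R,\D,\fa_\bullet^{t_\bullet}\fb^{s-1})$, while the reverse inclusion is immediate from uniform compatibility, exactly as in \cite[Theorem~4.2]{HT04}. Part (5) is an unwinding of Definition \ref{phi}: the generator $\phi_\D^e$ identifies $\Hom_R(F^e_*R((p^e-1)\D),R)$ with $F^e_*R$, and applying it to $F^e_*\tau(R,\D,\fa_\bullet^{t_\bullet})$ produces an ideal that is uniformly compatible for $(\D,\prod \fa_i^{t_i/p^e})$ and minimal, so equals the asserted test ideal.

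Finally, for (6) and (7), the strategy is the same: test ideals commute with flat local maps whose fibers are regular (in particular with localization and with $\m$-adic completion of an $F$-finite ring), because both uniform compatibility and minimality are preserved under such base change; the detailed verification is carried out in \cite[Proposition 3.23]{Sch11}, \cite[Proposition 3.1]{HT04} and \cite[Proposition 3.2]{HT04}. The only step likely to require real care, as opposed to bookkeeping, is (5), where one must keep track of the twist by $(p^e-1)\D$ and check that the isomorphism in Definition \ref{phi} really sends the test ideal of $\fa_\bullet^{t_\bullet}$ to the test ideal with exponents divided by $p^e$; the rest is essentially formal once the right references are in place.
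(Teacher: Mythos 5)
Your overall plan coincides with the paper's: parts (1), (2), (4), (6) are citations, (3) follows from (2) plus the ascending chain condition, and the only parts the paper actually argues are (5) and (7). A few comments on where your sketch diverges or slips.

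\textbf{Part (2):} Your explanation has the logic reversed. Because $\D' \ge \D$ shrinks the Hom module and $\fa_i' \subseteq \fa_i$, $t_i' \ge t_i$ shrinks the ideals being pushed forward, the uniform $(\D', \prod(\fa_i')^{t_i'}, F)$-compatibility condition imposes \emph{fewer} constraints than uniform $(\D, \fa_\bullet^{t_\bullet}, F)$-compatibility; hence every uniformly $(\D,\fa_\bullet^{t_\bullet},F)$-compatible ideal is automatically uniformly $(\D',\prod(\fa_i')^{t_i'},F)$-compatible --- not the other way around, as you wrote. In particular $\tau(R,\D,\fa_\bullet^{t_\bullet})$ is a nonzero uniformly $(\D',\prod(\fa_i')^{t_i'},F)$-compatible ideal, and minimality of $\tau(R,\D',\prod(\fa_i')^{t_i'})$ gives the stated inclusion. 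Your conclusion is correct, but the intermediate step as written would prove the opposite containment.

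\textbf{Part (5):} Here you take a genuinely different route than the paper. You propose to show that $\phi^e_{\D}(F^e_* \tau(R,\D,\fa_\bullet^{t_\bullet}))$ is directly a minimal nonzero uniformly $(\D,\prod\fa_i^{t_i/p^e},F)$-compatible ideal. This can be made to work but is not as formal as you suggest: one must reconcile the ceiling terms $\age{(t_i/p^e)(p^{e'}-1)}$ arising from the new exponents with the ceiling terms $\age{t_i(p^{e+e'}-1)}$ coming from compatibility of $\tau(R,\D,\fa_\bullet^{t_\bullet})$ upon composing with $\phi^e_\D$, and these do not match on the nose; the minimality of the image also needs an argument. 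The paper instead invokes \cite[Lemma 3.21]{BSTZ10} to write, for all $n \gg 0$,
\[
\tau(R, \D, \fa_{\bullet}^{t_\bullet}) = \phi^{en}_\D\bigl(F^{en}_* \textstyle\prod_i \fa_i^{\age{p^{en} t_i}}\, \tau(R,\D)\bigr), \qquad
\tau(R, \D, \textstyle\prod_{i}\fa_i^{t_i/p^e}) = \phi^{en}_\D\bigl(F^{en}_* \textstyle\prod_i \fa_i^{\age{p^{e(n-1)} t_i}}\, \tau(R,\D)\bigr),
\]
and then the identity $\phi^e_\D\bigl(F^e_* \phi^{en}_\D(\cdot)\bigr) = \phi^{e(n+1)}_\D(\cdot)$ makes (5) an immediate comparison. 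This sidesteps all ceiling bookkeeping and is the cleaner route for a paper-level argument; if you want to pursue your own approach, you should supply the lemmas that handle the rounding mismatch (they essentially reduce to the kind of stabilization the BSTZ formula encodes).

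\textbf{Part (7):} You cite \cite[Proposition 3.2]{HT04}, which is the paper's ``cf.'' reference; the paper itself appeals to \cite[Proposition 2.10 (iv)]{Sat19a}, which is stated in the generality of pairs $(R,\D)$ with real exponents as used here. This is an immaterial difference, but if one is being careful about hypotheses (e.g., $\Q$-Weil divisors, formal exponents) the reference the paper chose is tailored to the exact setting.

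Parts (1), (3), (4), (6) are fine and match the paper's treatment (all are cited, and your sketches of the underlying arguments are standard and correct).
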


\begin{proof}
The assertion in (3) follows from (2) and the ascending chain condition for ideals of $R$.
It follows from \cite[Lemma 3.21]{BSTZ10} that for all sufficiently large integer $n>0$, we have
\[
\begin{split}
\tau(R, \D, \fa_{\bullet}^{t_\bullet}) & = \phi^{en}(F^{en}_* \prod_i \fa_i^{\age{p^{en} t_i}} \tau(R,\D))  \textup{ and}\\
\tau(R, \D, \prod_{i=1}^m \fa_i^{t_i/p^e}) & = \phi^{en}(F^{en}_* \prod_i \fa_i^{\age{p^{e(n-1)} t_i}} \tau(R,\D)),
\end{split}
\]
which proves (5).
The assertion in (7) follows from \cite[Proposition 2.10 (iv)]{Sat19a}.
\end{proof}

\begin{lem}[\textup{\cite[Theorem 3.1]{Tak06}}]\label{sum}
Let $(R,\D)$ be a pair, $\fa, \fb \subseteq R$ be non-zero ideals and $t \ge 0$ be a real number.
Then we have 
\[
\tau(R, \D, (\fa+ \fb)^t)=\sum_{u,v \in \R_{\ge 0}, u+v=t} \tau(R,\D, \fa^u \fb^v)
\]
\end{lem}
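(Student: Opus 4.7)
The plan is to establish the two containments separately, with the non-trivial direction being $\subseteq$. For $\supseteq$, I would apply monotonicity of test ideals (Lemma \ref{test basic}(2)): fixing $u, v \ge 0$ with $u + v = t$, super-additivity of the ceiling function yields $\lceil (p^e - 1) u \rceil + \lceil (p^e - 1) v \rceil \ge \lceil (p^e - 1) t \rceil$, whence $\fa^{\lceil (p^e - 1) u \rceil} \fb^{\lceil (p^e - 1) v \rceil} \subseteq (\fa + \fb)^{\lceil (p^e - 1) t \rceil}$ for every $e$. Any uniformly $(\D, (\fa + \fb)^t, F)$-compatible ideal is therefore $(\D, \fa^u \fb^v, F)$-compatible, so $\tau(R, \D, (\fa + \fb)^t)$ itself is such an ideal and must contain the minimal one, namely $\tau(R, \D, \fa^u \fb^v)$.

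For the $\subseteq$ direction I would leverage the explicit formula extracted in the proof of Lemma \ref{test basic}(5): choosing $e_0$ with $(p^{e_0} - 1)(K_R + \D)$ Cartier and $N = e_0 n$ for $n$ sufficiently large,
\[
\tau(R, \D, (\fa + \fb)^t) = \phi_\D^N\bigl( F^N_* (\fa + \fb)^{\lceil p^N t \rceil} \tau(R, \D) \bigr).
\]
Expanding the elementary identity $(\fa + \fb)^M = \sum_{a + b = M} \fa^a \fb^b$ and using linearity turns this into
\[
\tau(R, \D, (\fa + \fb)^t) = \sum_{a + b = \lceil p^N t \rceil} \phi_\D^N\bigl( F^N_* \fa^a \fb^b \tau(R, \D) \bigr).
\]
For each pair $(a, b)$ in the sum I would produce $u, v \ge 0$ with $u + v = t$ and $\lceil p^N u \rceil = a$, $\lceil p^N v \rceil = b$: set $u = a/p^N$, $v = t - u$ whenever $a \le p^N t$, and $u = t$, $v = 0$ in the boundary case $a > p^N t$ (which forces $a = \lceil p^N t \rceil$ and $b = 0$). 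Each summand then becomes $\phi_\D^N(F^N_* \fa^{\lceil p^N u \rceil} \fb^{\lceil p^N v \rceil} \tau(R, \D))$, and by the monotonicity of the ascending chain in \cite[Lemma 3.21]{BSTZ10} it is contained in the limit $\tau(R, \D, \fa^u \fb^v)$, which is itself one of the ideals appearing in the right-hand sum. Summing over $(a, b)$ completes the containment.

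The delicate step is this last inclusion: we need $\phi_\D^N(F^N_* \fa^{\lceil p^N u \rceil} \fb^{\lceil p^N v \rceil} \tau(R, \D)) \subseteq \tau(R, \D, \fa^u \fb^v)$ for the single $N$ already fixed, not for some potentially larger $N$ tailored to each auxiliary triple $(R, \D, \fa^u \fb^v)$. The saving grace is precisely that this holds for every $N$ satisfying the Cartier hypothesis: the relevant family of ideals forms an ascending chain that stabilizes to the target test ideal, so every term sits inside the limit. With one $N$ large enough for the outer triple $(R, \D, (\fa + \fb)^t)$, the proof then avoids any uniformity issues across the family of auxiliary exponent pairs $(u, v)$ that $N$ itself produces.
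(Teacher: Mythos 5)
The paper itself does not prove this lemma; it is quoted directly from Takagi (\cite[Theorem~3.1]{Tak06}), so there is no in-paper proof to compare against. Your argument is a genuine, self-contained derivation built on the ingredients this paper does supply, and I believe it is correct. The $\supseteq$ direction is clean: super-additivity of ceilings gives $\fa^{\age{u(p^e-1)}}\fb^{\age{v(p^e-1)}}\subseteq(\fa+\fb)^{\age{t(p^e-1)}}$, so any uniformly $(\D,(\fa+\fb)^t,F)$-compatible ideal is also uniformly $(\D,\fa^u\fb^v,F)$-compatible, and minimality of $\tau(R,\D,\fa^u\fb^v)$ among non-zero compatible ideals gives the containment. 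For $\subseteq$, you correctly use the formula $\tau(R,\D,\fa_\bullet^{t_\bullet})=\phi_\D^{en}(F^{en}_*\prod_i\fa_i^{\age{p^{en}t_i}}\tau(R,\D))$ (quoted in the proof of Lemma~\ref{test basic}(5) from \cite{BSTZ10}), expand $(\fa+\fb)^{\age{p^Nt}}$ by the binomial theorem for ideals, and for each pair $(a,b)$ manufacture exponents $(u,v)$ with $u+v=t$, $\age{p^Nu}=a$, $\age{p^Nv}=b$; the integrality of $a$ makes the ceiling bookkeeping work, and the boundary case $a=\age{p^Nt}$, $b=0$ is handled separately. You also correctly identify and dispose of the delicate uniformity issue: the level $N$ was chosen for $(\fa+\fb)^t$, not for the auxiliary triples $(R,\D,\fa^u\fb^v)$, but this is harmless because the chain $\{\phi_\D^{en}(F^{en}_*\fa^{\age{p^{en}u}}\fb^{\age{p^{en}v}}\tau(R,\D))\}_n$ is ascending (using $\phi_\D^{e(n+1)}\doteq\phi_\D^{en}\circ F^{en}_*\phi_\D^e$, $\fa^{\age{p^{e(n+1)}u}}\supseteq(\fa^{\age{p^{en}u}})^{[p^e]}$, and $\phi_\D^e(F^e_*\tau(R,\D))=\tau(R,\D)$), so every term, not merely the eventual stable one, lies inside $\tau(R,\D,\fa^u\fb^v)$. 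If you want to tighten the writeup, it would be worth spelling out that ascending-chain verification in one line, since it is the load-bearing step and the paper only quotes the ``sufficiently large $n$'' form of the formula.
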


\begin{lem}[\textup{\cite{ST14b}}]\label{discrete}
Let $(R,\D)$ be a pair such that $K_X+\D$ is $\Q$-Cartier, $\fa \subseteq R$ be an ideal and $t>0$ be a real number.
Then there exists a real number $\delta>0$ such that for all real numbers $t-\delta < t', t'' <t$, we have
\[
\tau(R,\D, \fa^{t'})=\tau(R, \D, \fa^{t''}).
\]
\end{lem}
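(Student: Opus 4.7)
The plan is to reduce this left-continuity statement to the discreteness of the set of $F$-jumping numbers, which is the main theorem of \cite{ST14b} under the hypothesis that $K_X+\D$ is $\Q$-Cartier. Recall that $s>0$ is an \emph{$F$-jumping number} of $\fa$ with respect to $(R,\D)$ when $\tau(R,\D,\fa^s) \subsetneq \tau(R,\D,\fa^{s-\epsilon})$ holds for every $\epsilon>0$, and that the discreteness theorem guarantees this set has no accumulation point in $\R_{\ge 0}$.

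First I would combine the two basic properties already recorded in Lemma \ref{test basic}. By (2), the function $s \mapsto \tau(R,\D,\fa^s)$ is non-increasing with respect to inclusion as $s$ grows, and by (3) it is right-continuous in the sense that each $s$ admits $\epsilon>0$ with $\tau(R,\D,\fa^s) = \tau(R,\D,\fa^{s'})$ for every $s \le s' < s+\epsilon$. These two properties together imply that the function is constant on each half-open interval $[s_i, s_{i+1})$ bounded by consecutive $F$-jumping numbers, and strictly shrinks at each jumping number.

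Next, invoking discreteness, the set of $F$-jumping numbers of $\fa$ with respect to $(R,\D)$ meets the bounded interval $[0,t]$ in only finitely many points. Let $s_0$ be the largest $F$-jumping number strictly less than $t$, or set $s_0 = 0$ if no such number exists. For any $\delta$ with $0 < \delta < t - s_0$ and any $t', t'' \in (t-\delta, t) \subseteq (s_0, t)$, both $\tau(R,\D,\fa^{t'})$ and $\tau(R,\D,\fa^{t''})$ equal the common value of the test ideal on the jumping-number-free interval $(s_0, t)$, which yields the claim.

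The substantive input is of course the discreteness of $F$-jumping numbers established in \cite{ST14b}, and this is where the $\Q$-Cartier hypothesis on $K_X+\D$ is essential; the argument there proceeds by transporting the would-be accumulation of jumping numbers through the operator $\phi^e_\D$ using the identity $\phi^e_\D(F^e_* \tau(R,\D,\fa^s)) = \tau(R,\D,\fa^{s/p^e})$ of Lemma \ref{test basic}(5), and deriving a contradiction with the Noetherian property. Given that result, the rest of the proof above is a purely formal consequence of the monotonicity and right-continuity already built into the theory.
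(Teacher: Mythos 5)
The paper itself offers no proof of Lemma~\ref{discrete}; it is stated as a citation to \cite{ST14b}, so there is nothing internal to compare against. Your reduction to the discreteness of $F$-jumping numbers is the natural and correct route. The only step that deserves spelling out is that ``the test ideal is constant on each jumping-number-free open interval,'' which is not quite automatic from monotonicity alone; it needs the right-continuity of Lemma~\ref{test basic}(3) together with a small extremal argument. Concretely, if $s' < s''$ lie in an interval free of jumping numbers and $\tau(R,\D,\fa^{s''}) \subsetneq \tau(R,\D,\fa^{s'})$, set $s^* := \sup\{u \ge s' : \tau(R,\D,\fa^{u}) = \tau(R,\D,\fa^{s'})\}$; right-continuity at $s'$ forces $s^* > s'$, monotonicity forces $s^* \le s''$, right-continuity at $s^*$ rules out $\tau(R,\D,\fa^{s^*}) = \tau(R,\D,\fa^{s'})$, and monotonicity then shows $s^*$ is a jumping number in $(s', s'']$, a contradiction. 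You gesture at this but do not quite close the loop, so it is worth including. Your speculative remark about how \cite{ST14b} itself proves discreteness via $\phi^e_\D$ is plausible but not needed for the argument, since you are taking that theorem as a black box. With the extremal step made explicit, the proof is complete and is the intended derivation.
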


\subsection{Base $q$ expansion}
\begin{defn}[\textup{cf. \cite[Definition 2.1, 2.2]{HnBWZ16}}]
Let $q \ge 2$ be an integer, $t>0$ be a real number and $n \in \Z$ be an integer.
We define the \emph{$n$-th digit} of $t$ in base $q$ by
\[
t^{(n)} : =\age{t q^n -1 } - q \age{t q^{n-1} -1} \in \Z.
\]
We define the \emph{$n$-th truncation} of $t$ in base $q$ by 
\[
\langle t \rangle _{n, q} : = \age{t q^n -1 }/q^n \in \Q.
\]
\end{defn}

\begin{lem}\label{qadic}
Let $q>1$ be an integer and $t>0$ be a real number.
Then the following hold.
\begin{enumerate}
\item[$(1)$] For any integer $n \in \Z$, we have 
\begin{enumerate}
\item[$($a$)$] $0 \le t^{(n)} <q$, 
\item[$($b$)$] $\qadic{t}{n} = \qadic{t}{n-1} + t^{(n)}/q^n$, and 
\item[$($c$)$] $t-(1/q^n) \le \qadic{t}{n} < t$.
\end{enumerate}
\item[$(2)$] If $t$ is a rational number, then after replacing $q$ by its power, $t^{(n)}$ is non-zero and constant for all $n \ge 2$.
\item[$(3)$] $t \in \Z[1/q]$ if and only if $t^{(n)}=q-1$ for all sufficiently large $n>0$.
\end{enumerate}
\end{lem}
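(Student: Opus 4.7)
The plan is to set $a_n := \age{tq^n - 1}$ throughout, so that $\qadic{t}{n} = a_n/q^n$, and to extract all three parts from the single characterizing inequality
\[
a_n < tq^n \le a_n + 1.
\]
For (1)(b), this is purely algebraic: $\qadic{t}{n} - \qadic{t}{n-1} = (a_n - qa_{n-1})/q^n = t^{(n)}/q^n$. For (1)(c), dividing the displayed inequality by $q^n$ yields exactly $t - 1/q^n \le \qadic{t}{n} < t$. For (1)(a), I would combine the inequalities for $a_n$ and $qa_{n-1}$: from $qa_{n-1} < tq^n \le a_n + 1$ I get $qa_{n-1} \le a_n$, and from $a_n < tq^n \le qa_{n-1} + q$ I get $a_n < qa_{n-1} + q$.

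For (3), the forward direction is a direct calculation: if $t = N/q^k \in \Z[1/q]$, then for $n \ge k$ the quantity $tq^n - 1 = Nq^{n-k} - 1$ is already an integer, so $a_n = Nq^{n-k} - 1$ and $t^{(n)} = a_n - qa_{n-1} = q - 1$. For the converse, if $t^{(n)} = q-1$ for all $n \ge N$, I would telescope the identity in (1)(b) to get $\qadic{t}{n} = \qadic{t}{N-1} + (1/q^{N-1} - 1/q^n)$, then let $n \to \infty$; by (1)(c) the left side converges to $t$, so $t = \qadic{t}{N-1} + 1/q^{N-1} \in \Z[1/q]$.

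For (2), I would split on whether $t \in \Z[1/q]$. If yes, (3) already gives $t^{(n)} = q - 1 \ne 0$ for $n \gg 0$, and replacing $q$ by a suitable power brings the stabilization index down to $n \ge 2$. If $t \notin \Z[1/q]$, then $tq^n \notin \Z$ for all $n$, so $a_n = \sage{tq^n}$, and writing $t = a/b$ with $\gcd(a,b)=1$ the fractional parts $\{tq^n\} = (aq^n \bmod b)/b$ are eventually periodic in $n$, with some period $s$. Since $t^{(n)} = \sage{q\{tq^{n-1}\}}$, the digits themselves are eventually periodic with period $s$. Passing to base $Q := q^s$ collapses one full period into a single digit, so the base-$Q$ digits $D_m$ are eventually constant; a further replacement $Q \leadsto Q^u$ with $u$ larger than the stabilization index brings constancy down to $m \ge 2$, by the same telescoping identity as in (1)(b) applied in base $Q^u$.

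The main obstacle is the non-vanishing of the stabilized digit, together with this book-keeping that reduces the stabilization index to $2$. Non-vanishing I would handle by contradiction: if the eventually constant base-$q^{su}$ digit were $0$, then the truncations $\qadic{t}{m}{q^{su}}$ would be constant for all large $m$, and taking the limit via (1)(c) would force $t = \qadic{t}{M}{q^{su}} \in \Z[1/q^{su}] \subseteq \Z[1/q]$, contradicting the assumption of this case.
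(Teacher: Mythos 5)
Your treatment of (1) and (3) is the straightforward unwinding of the defining inequality $a_n < tq^n \le a_n + 1$ with $a_n := \age{tq^n - 1}$, which is exactly what the paper has in mind when it says those parts follow easily from the definition, so there is nothing to compare there. For (2), however, you take a genuinely different route. The paper replaces $q$ by a power $Q$ chosen so that $Q(Q-1)t \in \Z$ (attainable: split the denominator of $t$ as $b_1 b_2$ with $b_1$ having only prime factors of $q$ and $b_2$ coprime to $q$; then $b_1 \mid Q$ for $Q$ a high enough power, and $b_2 \mid Q-1$ once the exponent is a multiple of the order of $q$ modulo $b_2$). Writing $t = c/(Q(Q-1))$ and using $Q^{n-1} = (Q-1)(1 + Q + \cdots + Q^{n-2}) + 1$, one gets $a_n = c(1 + Q + \cdots + Q^{n-2}) + K$ with $K := \age{c/(Q-1) - 1}$, whence $t^{(n)} = c - (Q-1)K \in \{1, \dots, Q-1\}$ for all $n \ge 2$: constancy of the digit, its range, and its non-vanishing all drop out of a single explicit formula. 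You instead observe that the base-$q$ digit string of a rational number is eventually periodic, pass to $q^s$ to collapse a period into a single constant digit, pass to a further power to push the stabilization index down to $2$, and extract non-vanishing from part (3) via the limit in (1)(c). Both arguments are correct; yours makes the underlying periodicity explicit and is transparent about why eventual constancy occurs, at the cost of two rounds of replacing $q$ and a separate case analysis for the non-vanishing, while the paper's sharper choice of power front-loads everything into one computation.
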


\begin{proof}
The assertions in (1) and (3) follow easily from the definition.
For (2), replace $q$ by its multiple so that we have $q(q-1)t \in \Z$.
Then the assertion is obvious.
\end{proof}

\begin{lem}[\textup{\cite[Theorem 4.5, Convention 3.4]{HnBWZ16}}]\label{graded fpt}
Let $k$ be a perfect field of characteristic $p>0$ and $A=k[x,y]$ be a polynomial ring equipped with the structure of $\N$-graded ring such that $x$ and $y$ are homogeneous elements of positive degree.
Let $\m:=(x,y) \subseteq A$ be the homogeneous maximal ideal and $f \in A$ be a reduced homogeneous polynomial of degree $D>0$.
Assume that $D$ is coprime to $p$ and $\fpt(A_\m; (f)) \neq s : = \deg(xy)/D$.
Then there exists an integer $L \ge 1$ such that 
\[
\fpt(A_\m; (f))= \qadic[p]{s}{L}.
\]
\end{lem}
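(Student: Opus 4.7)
The plan is to identify $t := \fpt(A_\m; (f))$ with a base-$p$ truncation of $s$ by analyzing the Fedder-type invariants
$\nu_e(f) := \max\{r \ge 0 : f^r \notin \m^{[p^e]}\}$,
for which the standard identity $t = \sup_e \nu_e(f)/p^e = \lim_{e\to\infty} \nu_e(f)/p^e$ holds.  Throughout, write $\alpha := \deg x$ and $\beta := \deg y$, so that $sD = \alpha + \beta$.

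First I would establish a degree upper bound. A monomial $x^a y^b$ in the support of $f^r$ satisfies $a\alpha + b\beta = rD$, and it lies outside $\m^{[p^e]}$ exactly when $a, b \le p^e - 1$. This forces $rD \le (\alpha + \beta)(p^e - 1) = sD(p^e - 1)$, so $\nu_e(f) \le \sage{s(p^e - 1)}$ for every $e \ge 1$, and in particular $t \le s$. Since $\gcd(D, p) = 1$, Lemma \ref{qadic}(2) ensures that after replacing $p$ by a suitable power the digits $s^{(n)}$ become eventually constant and non-zero; the upper bound $\sage{s(p^e-1)}/p^e$ then matches the truncation $\qadic[p]{s}{e}$ up to a controlled error, and the candidate stabilization values for $\nu_e(f)/p^e$ strictly below $s$ are precisely the truncations $\qadic[p]{s}{L}$.

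The heart of the argument is the propagation step under the assumption $t \ne s$. Since $t < s$, there is a smallest $L \ge 1$ at which the upper bound fails, i.e.\ $\nu_L(f) < \sage{s(p^L - 1)}$, meaning that some extremal multinomial coefficient in the expansion of $f^{\nu_L(f) + 1}$ vanishes modulo $\m^{[p^L]}$. I would then show that $\nu_e(f)/p^e$ stabilizes at $\qadic[p]{s}{L}$ for all $e \ge L$, yielding $t = \qadic[p]{s}{L}$. The upper bound at level $e$ follows from the first step together with the first-failure property at $L$. For the lower bound one uses Freshman's-dream multiplicativity: from a surviving monomial in $f^{\nu_L(f)}$ modulo $\m^{[p^L]}$, its $p^{e-L}$-th Frobenius power gives a surviving monomial in $f^{p^{e-L}\nu_L(f)}$ modulo $\m^{[p^e]}$, and one carefully multiplies by a surviving monomial from $f^{p^{e-L} - 1}$ to raise the exponent to $p^{e-L}\nu_L(f) + (p^{e-L} - 1)$, whose division by $p^e$ is precisely $\qadic[p]{s}{L}$.

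The main obstacle will be making this propagation rigorous: one must show that a ``missing digit'' at level $L$ genuinely pins down all later digits of $\nu_e(f)$ in base $p$ to the maximum $p - 1$, and that the surviving monomial chosen in $f^{p^{e-L} - 1}$ indeed exists and multiplies correctly against the Frobenius power. Here both hypotheses enter decisively: reducedness of $f$ (so that after base change to $\bar k$ it factors as a product of distinct linear forms) controls the multinomial coefficients appearing in $f^r$, while $\gcd(D, p) = 1$ excludes the edge case $s \in \Z_{(p)}$ and prevents degenerate cancellations in those coefficients. These combinatorial inputs, which are the technical content of \cite[Theorem~4.5]{HnBWZ16}, are precisely what force the first missing digit to occur at some finite $L$ and all later digits to be maximal.
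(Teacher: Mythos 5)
The paper gives no proof of Lemma~\ref{graded fpt}: it is imported verbatim from \cite[Theorem 4.5]{HnBWZ16} under the setup of Convention 3.4 there, so there is no internal argument to compare against and your sketch is really an attempt to reconstruct the argument of that reference.

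The parts you actually carry out are sound. The degree inequality $\nu_e(f)\le\lfloor s(p^e-1)\rfloor$ does follow from requiring a monomial $x^a y^b$ surviving in $f^r$ modulo $\m^{[p^e]}$ to satisfy $a,b\le p^e-1$ and $a\alpha+b\beta=rD$, and it gives $t\le s$; the identification $\nu_e(f)/p^e=\qadic[p]{t}{e}$ is Lemma~\ref{fpt qadic}. The standard recursions $p\nu_e(f)\le\nu_{e+1}(f)\le p\nu_e(f)+p-1$ make your propagation scheme at least plausible: if one knew that dropping below the ceiling at some first level $L$ forces $\nu_{e+1}(f)=p\nu_e(f)+p-1$ for every $e\ge L$, then $\nu_e(f)+1=p^{e-L}(\nu_L(f)+1)$, so $t=(\nu_L(f)+1)/p^L$, and one would only have to match this with $\qadic[p]{s}{L}$. (Your intermediate claim that $\bigl(p^{e-L}\nu_L(f)+p^{e-L}-1\bigr)/p^e$ equals $\qadic[p]{s}{L}$ is off by $1/p^e$; the two only agree in the limit $e\to\infty$.)

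The genuine gap is exactly the one you flag yourself. You have not shown that such a first-failure level $L$ exists in the relevant sense, nor that a failure at level $L$ forces the base-$p$ digit $p-1$ at every later level. Your appeal to reducedness and factorization of $f$ into distinct linear forms over $\bar k$ is also suspect in this generality: under the non-standard grading with $\deg x=\alpha$, $\deg y=\beta$, the irreducible homogeneous factors of $f$ are not linear forms, and the step where you ``carefully multiply by a surviving monomial of $f^{p^{e-L}-1}$'' is precisely where cross-terms in the product can cancel the chosen monomial, so a separate argument is needed to rule that out. That combinatorial control of the coefficients of $f^r$ modulo $p$ is the technical heart of \cite{HnBWZ16}, and in your write-up it is cited rather than supplied. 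As it stands the proposal is a correct framing of the theorem, together with a correct derivation of the easy inequalities, but it is not a proof.
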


Let $R$ be a ring of characteristic $p>0$, $\fa \subseteq R$ be an ideal and $q=p^e$ be a power of $p$.
We denote by $\fa^{[q]}$ the ideal of $R$ generated by the set $\{f^q \mid f \in \fa\}$.
We note that if $\fa$ is generated by $f_1, \dots, f_n \in R$, then $\fa^{[q]}$ is generated by $f_1^q, \dots, f_n^q \in R$.

\begin{lem}[\textup{\cite[Corollary 2.30]{BMS08}}]\label{fpt qadic}
Let $k$ be an $F$-finite field of characteristic $p>0$, $A:=k[x_1, \dots, x_d]$ be a polynomial ring, $\m :=(x_1,\dots, x_d) \subseteq A$ be the maximal ideal and $f \in \m$ be a non-zero polynomial.
Then for every integer $e>0$, the $e$-th truncation $\qadic[p]{\fpt(A_\m; (f))}{e}$ of $\fpt(A_\m; (f))$ in base $p$ coincides with $\nu_f(p^e)/p^e$, where $\nu_f(p^e)$ is defined as
\[
\nu_f(p^e) := \sup\{n \ge 0 \mid f^n \not\in \m^{[p^e]} \}.
\]
\end{lem}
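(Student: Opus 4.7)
The plan is to prove $\lceil t p^e - 1 \rceil = \nu_f(p^e)$ with $t := \fpt(A_\m;(f))$, which is equivalent to the sandwich $\nu_f(p^e) < t p^e \leq \nu_f(p^e) + 1$. First I would translate sharp $F$-purity into a combinatorial condition on $\nu_f$. Since $A_\m$ is regular and $F$-finite, $\Hom_{A_\m}(F^{e_0}_* A_\m, A_\m)$ is free of rank one over $F^{e_0}_* A_\m$, and the classical criterion that some $\phi$ in this $\Hom$ surjects onto $A_\m$ when applied to $F^{e_0}_* J$ iff $J \not\subseteq \m^{[p^{e_0}]}$ translates sharp $F$-purity of $(A_\m, (f)^s)$ into the condition $s(p^{e_0} - 1) \leq \nu_f(p^{e_0})$ for some $e_0 \geq 1$. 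Hence $t = \sup_{e_0 \geq 1} \nu_f(p^{e_0})/(p^{e_0} - 1)$.

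Second, I would extract two Frobenius estimates for $\nu_f$. From $f^{\nu_f(p^e) + 1} \in \m^{[p^e]}$ and the identity $(a + b)^{p^k} = a^{p^k} + b^{p^k}$ in characteristic $p$, one deduces $\nu_f(p^{e+k}) \leq p^k (\nu_f(p^e) + 1) - 1$; from $f^{\nu_f(p^e)} \notin \m^{[p^e]}$ together with the flatness of Frobenius on the regular ring $A_\m$, one deduces $\nu_f(p^{e+k}) \geq p^k \nu_f(p^e)$. These bounds show that the sequence $\nu_f(p^{e'})/p^{e'}$ is monotone non-decreasing and converges to some limit $L$ satisfying $\nu_f(p^{e'})/p^{e'} \leq L \leq (\nu_f(p^{e'}) + 1)/p^{e'}$ for every $e'$.

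Third, I would identify $t = L$ and finish. The inequality $t \geq L$ is immediate from the characterization of sharp $F$-purity: for any $s < L$, one has $s < \nu_f(p^{e'})/p^{e'}$ for $e'$ large enough, so $s(p^{e'}-1) < \nu_f(p^{e'})$ and sharp $F$-purity holds at exponent $e'$. The reverse inequality $t \leq L$ uses the strong $F$-regularity of $A_\m$ to reinterpret $t$ via test ideals through Lemma \ref{F-sing basic}, so that $t$ coincides with $L$ by standard test ideal theory; this step implicitly excludes the boundary pathology $\nu_f(p^{e_0}) = L p^{e_0}$. Granting $t = L$, the upper bound $t p^e \leq \nu_f(p^e) + 1$ is the already-established Frobenius estimate, and the strict lower bound $\nu_f(p^e) < t p^e$ splits into two cases: when $\nu_f(p^e) \geq 1$, we have $t \geq \nu_f(p^e)/(p^e - 1) > \nu_f(p^e)/p^e$ directly from the supremum; when $\nu_f(p^e) = 0$, we need $t > 0$, which holds because $\bigcap_{e'} \m^{[p^{e'}]} = 0$ by Krull, forcing $\nu_f(p^{e'}) \geq 1$ for some $e'$. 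The main delicate point is the direction $t \leq L$, where the regularity of $A_\m$ is used essentially to rule out the boundary case.
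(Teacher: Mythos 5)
The paper does not reproduce a proof of this lemma (it cites \cite{BMS08}), so I am judging your proposal on its own merits. Your overall structure is right and most steps are correct: the reduction to the sandwich $\nu_f(p^e) < tp^e \leq \nu_f(p^e)+1$, the Fedder-type translation giving $t = \sup_{e_0\geq 1}\nu_f(p^{e_0})/(p^{e_0}-1)$, both Frobenius estimates as you state them, the monotonicity of $\nu_f(p^{e'})/p^{e'}$ and $(\nu_f(p^{e'})+1)/p^{e'}$ with a common limit $L$, and the strict lower bound (including the Krull argument when $\nu_f(p^e)=0$).

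The gap is the inequality $t \leq L$, which is precisely what the upper half of the sandwich rests on. Your Estimate 2, $\nu_f(p^{e+k}) \geq p^k\nu_f(p^e)$, only yields $\nu_f(p^{e'}) \leq Lp^{e'}$, but to conclude $\nu_f(p^{e'})/(p^{e'}-1) \leq L$ you need the strictly sharper bound $\nu_f(p^{e'}) \leq L(p^{e'}-1)$. The ``boundary pathology'' $\nu_f(p^{e_0})=Lp^{e_0}$ that you flag is fully compatible with both of your estimates; if it occurred, your own supremum argument would give $t \geq \nu_f(p^{e_0})/(p^{e_0}-1) > L$ and the claim would fail. Deferring to ``standard test ideal theory'' here begs the question, since the BMS description of $\tau(R,(f)^s)$ on a regular ring is proved using the very estimate you are missing. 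What is needed is the superadditivity $\nu_f(p^{e+e'}) \geq p^{e'}\nu_f(p^e) + \nu_f(p^{e'})$. This holds because if $g \notin \m^{[p^e]}$ and $h \notin \m^{[p^{e'}]}$, then writing $F^{e'}_*h = \sum_\beta h_\beta\, b_\beta$ in a free $A_\m$-basis $\{b_\beta\}$ of $F^{e'}_*A_\m$, some $h_\beta$ is a unit; since $F^{e'}_*(g^{p^{e'}}h) = g\cdot F^{e'}_*h = \sum_\beta (g h_\beta)\, b_\beta$ and $F^{e'}_*\bigl(\m^{[p^{e+e'}]}\bigr) = \m^{[p^e]}\cdot F^{e'}_*A_\m$, the containment $g^{p^{e'}}h \in \m^{[p^{e+e'}]}$ would force $g \in \m^{[p^e]}$. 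Iterating with $e'=e$ yields $\nu_f(p^{ke}) \geq \nu_f(p^e)\frac{p^{ke}-1}{p^e-1}$, hence $L = \lim_k \nu_f(p^{ke})/p^{ke} \geq \nu_f(p^e)/(p^e-1)$ for every $e$, which is exactly $L \geq t$. This simultaneously rules out the boundary case and closes the proof; your two estimates alone do not.
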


\subsection{Ultraproduct}
In this subsection, we define the ultraproduct of a family of sets and recall some properties.
We also define the catapower of a Noetherian local ring.
The reader is referred to \cite{Scho10} for details.

\begin{defn}
Let $\U$ be a collection of subsets of $\N$.
$\U$ is called an \emph{ultrafilter} if the following properties hold:
\begin{enumerate}
\item $\emptyset \not\in \U$.
\item For every subsets $A, B \subseteq \N$, if $A \in \U$ and $A \subseteq B$, then $B \in \U$.
\item For every subsets $A, B \subseteq \N$, if $A, B \in \U$, then $A \cap B \in \U$.
\item For every subset $A \subseteq \N$, if $A \not\in \U$, then $\N \setminus A \in \U$.
\end{enumerate}

An ultrafilter $\U$ is called \emph{non-principal} if the following holds:
\begin{enumerate}
\setcounter{enumi}{4}
\item If $A$ is a finite subset of $\N$, then $A \not\in \U$.
\end{enumerate}
\end{defn}

By Zorn's Lemma, there exists a non-principal ultrafilter.
From now on, we fix a non-principal ultrafilter $\U$.

\begin{defn}
Let $\{ T_m \}_{m \in \N}$ be a family of sets.
We define the equivalence relation $\sim$ on the set $\prod_{m \in \N} T_m$ by
\[
(a_m)_m \sim (b_m)_m \textup{ if and only if } 
\left\{ m \in \N \mid a_m=b_m \right\} \in \U.
\]
We define the \emph{ultraproduct} of $\{ T_m \}_{m \in \N}$ as
\[
\ulim_{m \in \N} T_m : = \left(\prod_{m \in \N} T_m \right) / \sim.
\]
If $T$ is a set and $T_m=T$ for all $m$, then we denote $\ulim_m T_m$ by $\ultra{T}$ and call it the \emph{ultrapower} of $T$.
\end{defn}

Let $\{ T_m \}_{m \in \N}$ be a family of sets and $a_m \in T_m$ for every $m$.
We denote by $\ulim_m a_m$ the class of $(a_m)_m$ in $\ulim_m T_m$.

Let $\{ R_m \}_{m \in \N}$ be a family of rings and $M_m$ be an $R_m$-module for every $m$.
Then $\ulim_m R_m$ has the ring structure induced by that of $\prod_m R_m$ and $\ulim_m M_m$ has the structure of $\ulim R_m$-module induced by the structure of $\prod_m R_m$-module on $\prod_m M_m$.
Moreover, if $k_m$ is a field for every $m$, then $\ulim_m k_m$ is a field.

Let $k$ be an $F$-finite field of positive characteristic.
Then the relative Frobenius morphism $F^e_* (k) \otimes_k \ultra{k} \to F^e_* (\ultra{k})$ is an isomorphism.
In particular, $\ultra{k}$ is an $F$-finite field. 

Let $(R, \m, k)$ be a local ring.
Then, one can show that $( \ultra{R}, \ultra{\m}, \ultra{k})$ is a local ring.
However, even if $R$ is Noetherian, the ultrapower $\ultra{R}$ may not be Noetherian because we do not have the equation $\cap_{n \in \N} (\ultra{\m})^n = 0$ in general.

\begin{defn}[\textup{\cite{Scho10}}]
Let $(R, \m)$ be a Noetherian local ring and $( \ultra{R}, \ultra{\m})$ be the ultrapower.
We define the \emph{catapower} $\cata{R}$ as the quotient ring
\[
\cata{R} : = \ultra{R}/ (\cap_{n} (\ultra{\m})^n).
\]
\end{defn}

\begin{prop}[\textup{\cite[Theorem 8.1.19]{Scho10}}]\label{cata cong}
Let $(R, \m, k)$ be a Noetherian local ring of equicharacteristic and $\widehat{R}$ be the $\m$-adic completion of $R$.
We fix a coefficient field $k \subseteq \widehat{R}$.
Then we have 
\[
\cata{R} \cong \widehat{R} \ \widehat{\otimes}_k (\ultra{k}).
\]
In particular, if $(R,\m)$ is an $F$-finite Noetherian normal local ring, then so is $\cata{R}$.
\end{prop}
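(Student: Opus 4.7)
The plan is to invoke the first isomorphism directly from \cite[Theorem 8.1.19]{Scho10}. The heart of Schoutens's proof is the natural $\widehat{R}$-algebra map $\widehat{R}\otimes_k\ultra{k}\to\cata{R}$ sending $r\otimes(\ulim_m c_m)$ to the class of $\ulim_m(c_m r)$ modulo $\bigcap_n(\ultra{\m})^n$; this extends to the completed tensor product, and Cohen's structure theorem together with an explicit inverse on the power-series model shows it is an isomorphism. I would recall this construction only if a detail of the explicit identification is needed later.

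My main focus is the ``In particular'' clause. Assume $R$ is $F$-finite Noetherian normal local. Since $F$-finite implies excellent, $\widehat R$ is normal. Fix a coefficient field $k\subseteq\widehat R$ and, via Cohen's structure theorem, a presentation $\widehat R\cong k[[x_1,\dots,x_N]]/I$ for some ideal $I$. Using the identity $k[[x_1,\dots,x_N]]\widehat{\otimes}_k\ultra{k}\cong\ultra{k}[[x_1,\dots,x_N]]$, the main isomorphism becomes
\[
\cata{R}\;\cong\;\ultra{k}[[x_1,\dots,x_N]]\big/\bigl(I\cdot\ultra{k}[[x_1,\dots,x_N]]\bigr).
\]
This description makes $\cata{R}$ manifestly Noetherian. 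Furthermore, $\ultra{k}$ is itself $F$-finite (as observed just before the definition of the catapower), so the power-series ring $\ultra{k}[[x_1,\dots,x_N]]$ is $F$-finite and hence so is its quotient $\cata{R}$.

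The main obstacle will be normality of $\cata{R}$. My plan is to reduce it to the claim that the field extension $k\hookrightarrow\ultra{k}$ is separable. I would establish this by choosing a $p$-basis $b_1,\dots,b_d$ of $k$, which exists because $k$ is $F$-finite, and verifying that it remains a $p$-basis of $\ultra{k}$: represent an arbitrary element of $\ultra{k}$ by a sequence $(f_m)\in\prod_m k$, expand each $f_m=\sum_{\alpha\in\{0,\dots,p-1\}^d} b^\alpha c_{\alpha,m}^p$ uniquely with $c_{\alpha,m}\in k$, and take the componentwise ultralimits of the coefficients to obtain both the expansion and its uniqueness in $\ultra{k}$. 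Once separability is in hand, the flat extension $\widehat R\to\widehat R\widehat{\otimes}_k\ultra{k}\cong\cata{R}$ is geometrically regular, and standard ascent results for normality along such extensions, combined with normality of the excellent ring $\widehat R$, yield the normality of $\cata{R}$.
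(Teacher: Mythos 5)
The paper gives no explicit proof of this proposition: it simply cites \cite[Theorem 8.1.19]{Scho10} and then states the ``In particular'' clause as an immediate consequence. Your reconstruction of that consequence is correct, and it fills in the argument that the paper leaves to the reader: pass to a Cohen presentation $\widehat R\cong k[[x_1,\dots,x_N]]/I$, use the isomorphism to identify $\cata R$ with $\ultra k[[x_1,\dots,x_N]]/I\,\ultra k[[x_1,\dots,x_N]]$, read off Noetherianity from this presentation, and read off $F$-finiteness from the $F$-finiteness of $\ultra k$ (which the paper records just before the definition of the catapower). Your $p$-basis argument is correct and shows cleanly that a $p$-basis of $k$ remains one of $\ultra k$, hence that $k$ and $(\ultra k)^p$ are linearly disjoint over $k^p$; this is exactly MacLane's criterion for separability of $\ultra k/k$.

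One point deserves to be stated more explicitly than ``standard ascent results for normality along such extensions.'' Separability of $\ultra k/k$ gives you that the \emph{closed} fiber $\ultra k$ of the flat local homomorphism $\widehat R\to\cata R$ is geometrically regular over $k$, but normality of $\cata R$ requires geometric regularity (or at least normality) of \emph{all} fibers. The bridge is Grothendieck's formal-smoothness criterion together with Andr\'e's localization theorem: flatness plus geometrically regular closed fiber makes $\widehat R\to\cata R$ formally smooth for the adic topologies, and formally smooth local homomorphisms of Noetherian local rings are regular morphisms, so every fiber is geometrically regular and normality ascends from $\widehat R$ (which is normal because $R$ is normal and excellent, the latter by Kunz). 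This is indeed a known theorem, so your proof is correct, but since it is the crux of the normality step it should be cited precisely rather than waved at as ``standard.''

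Minor remarks: you should also note that $\cata R$ is local (immediate, as a quotient of the local ring $\ultra R$), and that the flatness of $\widehat R\to\cata R$ comes from base-changing the flat map $k[[x]]\to\ultra k[[x]]$ along $k[[x]]\onto\widehat R$. Both are easy but worth a sentence.
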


Let $(R, \m)$ be a Noetherian local ring, $\cata{R}$ be the catapower and $a_m \in R$ for every $m$.
We denote by $\catae{a_m} \in \cata{R}$ the image of $\ulim_m a_m \in \ultra{R}$ by the natural projection $\ultra{R} \to \cata{R}$.
Let $\fa_m \subseteq R$ be an ideal for every $m \in \N$.
We denote by $[ \fa_m]_m \subseteq \cata{R}$ the image of the ideal $\ulim_m \fa_m \subseteq \ultra{R}$ by the projection $\ultra{R} \to \cata{R}$.

\begin{lem}[\textup{\cite[Proposition 2.10, Theorem 1.5, Proposition 4.5]{Sat19a}}]\label{cata fpt}
Let $(R,\D)$ be a strongly $F$-regular pair such that $(p^e-1)(K_R+\D)$ is Cartier for some integer $e>0$, $\{ \fa_m \}_{m \in \N}$ be a sequence of proper ideals of $R$ and $\cata{\D}$ be the flat pullback of $\D$ to $\Spec \cata{R}$.
Set $\fa_{\infty}: = [\fa_m]_m \subseteq \cata{R}$.
Then the following hold.
\begin{enumerate}
\item[$(1)$] $(\cata{R}, \cata{\D})$ is also a strongly $F$-regular pair and $(p^e-1)(K_{\cata{R}}+\cata{\D})$ is Cartier.
\item[$(2)$] If $t:= \lim_{m \to \infty} \fpt(R,\D;\fa_m)$ exists, then we have $t= \fpt(\cata{R}, \cata{\D}; \fa)$.
\item[$(3)$] If there exists an integer $N>0$ such that $\m^N \subseteq \fa_m$ for every $m$, then we have $\fpt(\cata{R}, \cata{\D}; \fa)= \fpt(R,\D; \fa)$ for infinitely many $m$.
\end{enumerate}
\end{lem}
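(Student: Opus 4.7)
The plan is to prove the three parts in order, using the structural identification $\cata{R} \cong \widehat{R} \,\widehat{\otimes}_k \,\ultra{k}$ from Proposition \ref{cata cong} as the foundational tool throughout. For Part (1), I would factor the natural map $R \to \cata{R}$ as $R \to \widehat{R} \to \cata{R}$; the first arrow is faithfully flat with regular fibers, and via Proposition \ref{cata cong} the second arrow is a completed tensor product with the $F$-finite field $\ultra{k}$ over $k$, again faithfully flat with geometrically regular fibers. Strong $F$-regularity is preserved under such base change, as one verifies by checking that the test ideal formula is compatible with the flat extension, in analogy with Lemma \ref{test basic} (7), and then invoking Lemma \ref{test basic} (1). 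The Cartier condition on $(p^e-1)(K_R+\D)$ transfers to $\cata{R}$ via flat pullback of $\Q$-Cartier divisors.

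For Part (2), set $s_m := \fpt(R,\D;\fa_m)$ and $s_\infty := \fpt(\cata{R}, \cata{\D}; \fa_\infty)$. Using the generator $\phi_\D^e$ of Definition \ref{phi} and its iterates, define
\[
\nu_{\fa}(p^{en}) := \sup\bigl\{\, j \ge 0 \;\bigm|\; \phi_\D^{en}(F^{en}_* \fa^j) = R \,\bigr\},
\]
an analogue of the quantity in Lemma \ref{fpt qadic}. Under the Cartier hypothesis, a base-$p^e$ version of Lemma \ref{qadic} gives $s_m = \lim_n \nu_{\fa_m}(p^{en})/p^{en}$ and similarly for $s_\infty$. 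The heart of the argument is the identity $\nu_{\fa_\infty}(p^{en}) = \ulim_m \nu_{\fa_m}(p^{en})$ for each fixed $n$: the condition $\phi_\D^{en}(F^{en}_* \fa^j) = R$ is equivalent to a finite collection of membership assertions in $R$ that, by the definition of catapower and Proposition \ref{cata cong}, transfer faithfully to $\cata{R}$. Combined with the assumed convergence $s_m \to t$, this yields $s_\infty = t$.

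For Part (3), the hypothesis $\m^N \subseteq \fa_m$ produces uniform bounds on the numbers $\nu_{\fa_m}(p^{en})$ and forces the $F$-pure thresholds $s_m$ to lie in a suitably discrete subset of $\R$ determined by $N$; the convergent sequence $\{s_m\}$ must therefore be eventually constant along an infinite subsequence, whose common value coincides with $s_\infty$ by Part (2). The main obstacle will be the verification in Part (2) that $\phi_\D^e$ transfers correctly to $\cata{R}$ as (a multiple of) the analogous generator $\phi_{\cata{\D}}^e$, and that the $\nu$-function respects the ultraproduct; this transfer principle is what ultimately makes $\fpt$ compatible with the catapower construction.
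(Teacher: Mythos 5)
This lemma is not proved in the paper at all; it is cited from \cite{Sat19a}, so there is no in-text proof to compare against. Judged on its own merits, your sketch for Part (1) is reasonable, and the $\nu$-function strategy for Part (2) is the right idea, though the key step — that the condition $\phi_\D^{en}(F^{en}_*\fa^j)=R$ ``transfers faithfully'' — conceals two real issues: the ordinary transfer principle (\L o\'s) operates on the ultrapower $\ultra{R}$, not on the quotient $\cata{R}=\ultra{R}/\bigcap_n(\ultra{\m})^n$, so one must argue that passing to the catapower preserves the surjectivity statement; and the phrase ``finite collection of membership assertions'' presumes a bound on the number of generators of $\fa_m^j$ that is uniform in $m$, which is not given. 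You should also make explicit the uniform error bound $0\le \fpt(R,\D;\fa_m)-\nu_{\fa_m}(p^{en})/p^{en}\le d/p^{en}$ that justifies interchanging $\lim_n$ and $\ulim_m$.

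The more serious gap is in Part (3). Your argument invokes ``the convergent sequence $\{s_m\}$,'' but Part (3) makes no convergence hypothesis — only that $\m^N\subseteq\fa_m$ for all $m$. Passing to a convergent subsequence does not help directly, because $\fa_\infty=[\fa_m]_m$ is formed from the full sequence and a fixed ultrafilter, and the ultrafilter will not in general concentrate on your chosen subsequence. What one actually needs is the statement that the set $\{\fpt(R,\D;\fa)\mid \m^N\subseteq\fa\subsetneq R\}$ is finite (this is essentially the content of \cite[Proposition 4.5]{Sat19a}); granting that, $\ulim_m s_m$ must equal $s_m$ on a set of indices lying in the nonprincipal ultrafilter $\U$, hence for infinitely many $m$, and Part (2) identifies this ultralimit with $\fpt(\cata{R},\cata{\D};\fa_\infty)$. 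You assert the discreteness but do not prove it; that finiteness claim is precisely where the substance of Part (3) lies and cannot be waved through.
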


\subsection{Sets of $F$-pure thresholds}

Let $(R,\D)$ be a sharply $F$-pure pair.
We denote by $\FPT(R,\D)$ the set of $F$-pure thresholds on $(R,\D)$, that is, 
\[
\FPT(R,\D) : = \{ \fpt(R,\D ; \fa) \mid \fa \subsetneq R \} \subseteq \R_{\ge 0}
\]
Similarly, we denote the set of $F$-pure thresholds of principal ideals on $(R,\D)$ by
\[
\FPT^{\mathrm{pr}}(R,\D) : = \{ \fpt(R,\D ; \fa) \mid \fa \subsetneq R \textup{ is a principal ideal} \} \subseteq \R_{\ge 0}
\]
For an integer $d \ge 1$, we define 
\[
\FPT_{<d}^{\mathrm{pr}}(R,\D) : = \bigcup_{\p \subseteq R} \FPT^{\mathrm{pr}}(R_{\p},\D|_{R_{\p}}),
\]
where $\p$ runs through all prime ideals of $R$ such that $1 \le \dim R_{\p} <d$ and $\D|_{R_{\p}}$ is the flat pullback of $\D$ by the natural morphism $\Spec R_{\p} \to \Spec R$.
When $\D=0$, we simply write $\FPT^{\mathrm{pr}}(R) := \FPT^{\mathrm{pr}}(R,0)$ and $\FPT^{\mathrm{pr}}_{<d}(R) := \FPT^{\mathrm{pr}}_{<d}(R,0)$.

\begin{defn}\label{TT}
Let $d>0$ be an integer, $p$ be a prime number and $k$ be an $F$-finite field.
\begin{enumerate}
\item[$(1)$] We denote by $P_d(k)$ the regular local ring defined as the localization of a polynomial ring $k[x_1, \dots, x_d]$ by the maximal ideal $(x_1, \dots, x_d) \subseteq k[x_1, \dots, x_d]$.
\item[$(2)$] We denote by $\TT_{d,p}^{\circ}$ the set of all $F$-pure thresholds on the regular local ring $P_d(\overline{\F_p})$, that is,
\[
\TT_{d,p}^{\circ} : = \FPT^{\mathrm{pr}}(P_{d}(\overline{\F_p})).
\]
\item[$(3)$] We denote by $\TT_{d,p}$ the set of all $F$-pure thresholds on all regular local rings of dimension $d$ and of characteristic $p$, that is,
\[
\TT_{d,p}:=\bigcup_A \FPT^{\mathrm{pr}}(A),
\]
where $A$ runs through all regular local ring of dimension $d$ and of characteristic $p$.
\end{enumerate}
\end{defn}

\begin{lem}\label{T basic}
Let $d>0$ be an integer, $p$ be a prime number.
Then the following holds.
\begin{enumerate}
\item[$(1)$] $\TT_{1,p}=\TT_{1,p}^{\circ} = \{0\} \cup \{1/m \mid m \in \N_{\ge 1} \}$.
\item[$(2)$] \textup{(\cite[Theorem 1.2]{BMS09})} The closure of the set $\TT_{d,p}^{\circ} \subseteq \R$ coincides with $\TT_{d,p}$.
\item[$(3)$] \textup{(\cite[Theorem 3.5]{BMS09})} For any algebraically closed field $k$ of characteristic $p$, we have $\TT_{d,p}^{\circ}=\FPT(P_d(k))$.
\item[$(4)$] There exists an $F$-finite filed $k$ of characteristic $p$ such that $\TT_{d,p}=\FPT(\widehat{P}_d(k))$, where $\widehat{P}_d(k)=k[[x_1, \dots,x_d]]$ is the ring of formal power series.
\item[$(5)$] If $d \ge 2$, then we have $\TT_{d-1,p}^{\circ} \subseteq \TT_{d,p}^{\circ}$ and $\TT_{d-1,p} \subseteq \TT_{d,p}$.
\end{enumerate}
\end{lem}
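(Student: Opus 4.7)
Parts (2) and (3) are cited from \cite{BMS09} and need no proof; I will focus on (1), (4), and (5).

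For (1), I would observe that any $1$-dimensional $F$-finite regular local ring $A$ is a discrete valuation ring, so every non-zero proper principal ideal has the form $(t^m)$ for some $m\geq 1$ with $t$ a uniformizer. A direct computation using $\nu_{t^m}(p^e)=\lceil p^e/m\rceil-1$ gives $\fpt(A;(t^m))=1/m$, independently of $A$; thus both $\TT_{1,p}$ and $\TT_{1,p}^{\circ}$ equal $\{0\}\cup\{1/m\mid m\in\N_{\geq 1}\}$.

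For (5), I would embed $A\hookrightarrow A[[y]]$, where $A$ is a $(d-1)$-dimensional $F$-finite regular local ring, so that $A[[y]]$ is $d$-dimensional $F$-finite regular local with the same residue field. A direct check gives $\m_{A[[y]]}^{[p^e]}\cap A=\m_A^{[p^e]}$, so for any $f\in A$ the invariant $\nu_f(p^e)$ is the same whether computed in $A$ or in $A[[y]]$, whence $\fpt(A[[y]];fA[[y]])=\fpt(A;f)$. This gives $\FPT^{\mathrm{pr}}(A)\subseteq\FPT^{\mathrm{pr}}(A[[y]])$, and hence $\TT_{d-1,p}\subseteq\TT_{d,p}$; the same argument (after completion via Lemma \ref{test basic}(7)) applied to $P_{d-1}(\overline{\F_p})\hookrightarrow P_d(\overline{\F_p})$ yields $\TT_{d-1,p}^{\circ}\subseteq\TT_{d,p}^{\circ}$.

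For (4), my plan is to take $k:=\ultra{\overline{\F_p}}$. The discussion preceding Proposition \ref{cata cong} shows $k$ is $F$-finite (since $\overline{\F_p}$ is), and algebraic closedness, being a first-order property, is preserved under ultraproducts, so $k$ is algebraically closed. Proposition \ref{cata cong} identifies $\cata{P_d(\overline{\F_p})}\cong\widehat{P}_d(k)$. For $\TT_{d,p}\subseteq\FPT(\widehat{P}_d(k))$: parts (2) and (3) give $\TT_{d,p}=\overline{\FPT(P_d(\overline{\F_p}))}$, and every point of this closure arises as $\fpt(\widehat{P}_d(k);[\fa_n]_n)$ via Lemma \ref{cata fpt}(2)---applied to a constant sequence for values already in $\FPT(P_d(\overline{\F_p}))$, and to an approximating sequence for accumulation points. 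For the reverse inclusion $\FPT(\widehat{P}_d(k))\subseteq\TT_{d,p}$: since $\cata{P_d(\overline{\F_p})}$ is Noetherian, every ideal $\fa\subseteq\widehat{P}_d(k)$ is finitely generated and hence has the form $[\fa_n]_n$ for some sequence $(\fa_n)$ of ideals in $P_d(\overline{\F_p})$; Lemma \ref{cata fpt}(2) then gives $\fpt(\widehat{P}_d(k);\fa)\in\overline{\FPT(P_d(\overline{\F_p}))}=\TT_{d,p}$.

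The main subtlety lies in the reverse inclusion of (4): Lemma \ref{cata fpt}(2) is formulated with the hypothesis that $\lim_n\fpt(P_d(\overline{\F_p});\fa_n)$ exists as an ordinary limit, which is not automatic for an arbitrary representing sequence. I expect to address this by appealing to the underlying criterion, at the level of sharp $F$-purity, that $(\cata{R};\fa^s)$ is $F$-pure precisely when $(R;\fa_n^s)$ is so on a $\U$-large set of indices; this identifies $\fpt(\cata{R};\fa)$ with $\ulim_n\fpt(R;\fa_n)$, which always exists by compactness of $[0,d]$ and lies in the closure of the range $\{\fpt(R;\fa_n):n\in\N\}$, so that the conclusion follows without extracting a usual-convergent representing sequence.
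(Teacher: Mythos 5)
Your proposal is essentially correct and follows the paper's strategy for (1), (5), and the forward inclusion of (4): the paper likewise observes the DVR computation for (1), uses the invariance of $\fpt$ under adding a free variable (via the $\nu_f(p^e)$ characterization) for (5), and applies Lemma~\ref{cata fpt}(2) together with the isomorphism $\cata{P_d(\overline{\F_p})}\cong\widehat{P}_d(k)$ from Proposition~\ref{cata cong} for (4).

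The one place you diverge, and the subtlety you flag, is the reverse inclusion in (4). Reading the lemma as $\TT_{d,p}=\FPT^{\mathrm{pr}}(\widehat{P}_d(k))$ (which is how the paper invokes it in Corollaries~\ref{Accum positive} and~\ref{two-dim}, and what the paper's own proof of (4) establishes), the reverse inclusion is immediate from the definition of $\TT_{d,p}$: $\widehat{P}_d(k)$ is an $F$-finite $d$-dimensional regular local ring ($k=\ultra{\overline{\F_p}}$ is $F$-finite, as noted before Proposition~\ref{cata cong}), so $\FPT^{\mathrm{pr}}(\widehat{P}_d(k))\subseteq\TT_{d,p}$ by construction of $\TT_{d,p}$ as a union over all such rings. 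Thus the whole machinery you deploy — representing an arbitrary ideal of $\cata{R}$ by a sequence, worrying about whether $\lim_n\fpt(R;\fa_n)$ exists, and upgrading Lemma~\ref{cata fpt}(2) to an ultralimit statement — is correct in spirit but unnecessary; the paper avoids it entirely and only proves the nontrivial direction $\TT_{d,p}\subseteq\FPT^{\mathrm{pr}}(\cata{P_d(\overline{\F_p})})$. Your ultralimit observation is a valid extra fact, but you should recognize when a containment holds by definition rather than reaching for heavy tools.

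Two small additional remarks: algebraic closedness of $k$ plays no role in (4), so you need not invoke Łoś's theorem there; and for (5) the paper works with localized polynomial rings $P_{d-1}(k)\subseteq P_d(k)$ rather than $A[[y]]$, though your variant yields the same conclusion by the same $\nu_f$ invariance.
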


\begin{proof}
If $(A,\m)$ is a discrete valuation ring, then we have $\fpt(A;\m^n)=1/n$ and $\fpt(A;0)=0$, which proves (1).
For (4), let $k : = \ultra{(\overline{\F}_p)}$ be the ultrapower of the algebraic closure of $\F_p$.
Noting that $\TT_{d,p}$ is the closure of $\TT_{d,p}^{\circ}= \FPT^{\mathrm{pr}}(P_d(\overline{\F_p}))$, it follows from Lemma \ref{cata fpt} (2) that we have
\[
\TT_{d,p} \subseteq \FPT^{\mathrm{pr}} (\cata{(P_d(\overline{\F}_p))}).
\]
Then the assertion in (4) follows from the isomorphism $\cata{(P_d(\overline{\F}_p))} \cong \widehat{P}_d(k)$ which follows from Proposition \ref{cata cong}.

It follows from Lemma \ref{fpt qadic} that for an $F$-finite field $k$ and for a principal ideal $(f) \subsetneq P_d(k)$, we have
\[
\fpt(P_{d-1}(k); (f))= \fpt(P_d(k);(f)).
\]
which proves the inclusion $\TT_{d-1,p}^{\circ} \subseteq \TT_{d,p}^{\circ}$ in (5).
The inclusion $\TT_{d-1,p} \subseteq \TT_{d,p}$ is similar.
\end{proof}

\section{Construction of counterexamples}

\begin{prop}\label{making accumulation}
Let $A:=k[x_1, \dots, x_d]$ be a polynomial ring over an $F$-finite field $k$ equipped with a structure of $\N_{\ge 0}$-graded ring such that $x_i$ is a homogeneous element of positive degree $a_i>0$ for every $i$.
Let $\m : = (x_1, \dots, x_d)$ denote the homogeneous maximal ideal of $A$ and $f \in \m$ be a homogeneous polynomial of degree $D>0$.
Assume the following two conditions.
\begin{enumerate}
\item[$(a)$] $\fpt(A_\m; (f)) < \sum a_i/D$.
\item[$(b)$] $\fpt(A_\m; (f)) \not\in \Z[1/p]$.
\end{enumerate}
Then we have $\fpt(A_\m; (f)) \in \Acc(\FPT^{\mathrm{pr}}(A_\m))$, here for a subset $S$ of $\R$, we denote by $\Acc(S) \subseteq \R$ the set of all accumulation points of $S$.
\end{prop}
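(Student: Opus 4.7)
The plan is to build a sequence of principal ideals $(g_e) \subseteq A_\m$ with $\fpt(A_\m;(g_e)) \to t := \fpt(A_\m;(f))$ and $\fpt(A_\m;(g_e)) \neq t$ for infinitely many $e$. Setting $n_e := \nu_f(p^e)$ and $t_e := \qadic[p]{t}{e} = n_e/p^e$ (Lemma \ref{fpt qadic}), Lemma \ref{qadic}(1)(c) gives $t_e \to t$, and condition (b) together with Lemma \ref{qadic}(3) forces $t_e < t$ strictly for infinitely many $e$. Since $t \notin \Z[1/p]$ while each $t_e \in \Z[1/p]$, realising any $F$-pure threshold close to but not equal to $t$ will suffice to place $t$ in $\Acc(\FPT^{\mathrm{pr}}(A_\m))$.

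The construction of $g_e$ exploits the graded Artinian Gorenstein structure of the quotient $A/\m^{[p^e]}$, whose socle is one-dimensional, generated by $\prod_i x_i^{p^e - 1}$ of degree $(p^e - 1)\sum_i a_i$. Set $g_e := f + h_e$, where $h_e$ is a homogeneous polynomial of degree $E_e := (p^e - 1)\sum_i a_i - (n_e - 1)D$ chosen so that
\[
f^{n_e - 1} h_e \equiv \prod_{i=1}^{d} x_i^{p^e - 1}\pmod{\m^{[p^e]}}.
\]
Condition (a), via $t < \sum a_i/D$, yields $n_e D < (p^e - 1)\sum a_i$ and hence $E_e \geq 0$ for all large $e$; the Gorenstein property of $A/\m^{[p^e]}$ then supplies such an $h_e$ via surjectivity of multiplication by the nonzero element $f^{n_e - 1}$ onto the socle. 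A Frobenius-binomial expansion gives
\[
g_e^{n_e} \equiv f^{n_e} + n_e\prod_{i=1}^{d} x_i^{p^e - 1}\pmod{\m^{[p^e]}},
\]
because all $k \geq 2$ contributions $\binom{n_e}{k}f^{n_e - k}h_e^k$ lie in degree strictly above the socle degree (again by condition (a)), hence in $\m^{[p^e]}$. A parallel degree count shows $g_e^{n_e + 1} \in \m^{[p^e]}$, so $\nu_{g_e}(p^e) = n_e$ and $\fpt(A_\m;(g_e)) \geq t_e$.

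For the convergence $\fpt(A_\m;(g_e)) \to t$, note that $E_e \to \infty$ forces $h_e$ to lie in arbitrarily high powers of $\m$; thus in the catapower $\cata{A_\m}$ one has $\catae{g_e} = f$, and Lemma \ref{cata fpt}(2) forces every subsequential limit of $\{\fpt(A_\m;(g_e))\}$ to equal $\fpt(\cata{A_\m};(f)) = t$, so $\fpt(g_e) \to t$ along the full sequence. For the inequality $\fpt(g_e) \neq t$, iterating Freshman's Dream on the congruence above yields, for every $j \geq 0$,
\[
g_e^{n_e p^j} \equiv f^{n_e p^j} + n_e^{p^j}\prod_{i=1}^{d} x_i^{p^{e+j} - p^j}\pmod{\m^{[p^{e+j}]}},
\]
and a socle-slack analysis of $\nu_{g_e}(p^{e+j})$ pins down $\fpt(A_\m;(g_e))$ as a specific rational whose denominator carries a $p$-coprime factor (in particular involving $p^e - 1$). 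Since $t \notin \Z[1/p]$ has its own fixed denominator, $\fpt(A_\m;(g_e)) = t$ can hold for at most finitely many $e$, giving the desired accumulation.

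The main obstacle will be the socle-slack book-keeping in the final step: one must track, as $k$ varies, both surviving contributions to $g_e^{n_e p^j + k}$ modulo $\m^{[p^{e+j}]}$ (the low-degree $f^{n_e p^j + k}$ tail and the top-degree monomial $\prod_i x_i^{p^{e+j} - p^j}$ multiplied by $g_e^k$), and verify that they remain in separate degree ranges so that no accidental cancellation destroys the $p$-coprime denominator. Condition (a) enters exactly to keep these two ranges disjoint, and condition (b) converts the $p$-coprime denominator into a genuine obstruction to $\fpt(A_\m;(g_e)) = t$.
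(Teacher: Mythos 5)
The paper's proof differs from yours at a crucial point, and your version has a genuine gap. The paper chooses $\alpha_e$ to be a \emph{monomial} of degree $(p^e-1)\sum a_i - \nu D$ so that $f^{\nu}\alpha_e$ (not $f^{\nu-1}\alpha_e$) has a nonzero socle component. Then in $g_e^{\nu+1} = \sum_i \binom{\nu+1}{i}f^{\nu+1-i}\alpha_e^i$, the $i=1$ term $(\nu+1)f^\nu\alpha_e$ has a nonzero socle coefficient, and since $\deg f \ne \deg\alpha_e$ the expansion is the homogeneous decomposition so nothing cancels it. Here condition (b) is used in a sharp way: choosing $e$ with $t^{(e)} \ne p-1$ forces $\nu+1 \not\equiv 0 \pmod p$, which is exactly why the coefficient $\binom{\nu+1}{1}=\nu+1$ survives. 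The payoff is $\nu_{g_e}(p^e) \ge \nu+1$, hence $\fpt(A_\m;(g_e)) > (\nu+1)/p^e > t$ immediately.

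Your construction takes $h_e$ with $f^{n_e-1}h_e$ hitting the socle, so $\deg h_e$ is larger by $D$. As a result the $k=1$ term in $g_e^{n_e+1}$ has degree $D$ \emph{above} the socle degree and vanishes modulo $\m^{[p^e]}$; as you compute yourself, one only gets $\nu_{g_e}(p^e) = n_e = \nu_f(p^e)$, i.e.\ $\fpt(g_e)$ and $t$ have the \emph{same} truncation to $e$ digits. This is the heart of the problem: your $g_e$ does not separate itself from $t$ at level $e$, and the entire burden of showing $\fpt(g_e) \ne t$ is pushed to the "socle-slack book-keeping" at higher powers $p^{e+j}$. That step is not carried out, is far from routine (computing $\nu_{g_e}(p^{e+j})$ exactly requires controlling an increasing number of contributions and possible cancellations), and you explicitly flag it as "the main obstacle". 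Moreover the role of condition (b) is only a vague promise in that step, whereas the paper uses (b) in a pinpoint way to guarantee a nonvanishing binomial coefficient. To repair your argument you would essentially have to switch to the paper's construction (shift the degree of the perturbation down by $D$ and examine $g_e^{n_e+1}$ instead of $g_e^{n_e}$), at which point the incomplete final step becomes unnecessary.

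A smaller issue: you also need $\deg h_e \ne \deg f$ (analogous to the paper's restriction to the set $E'$ where $\deg\alpha_e \ne D$) so that the binomial expansion really is the homogeneous decomposition and no accidental cancellation occurs between the $k=0$ and $k=1$ pieces; this is easily fixed since $E_e \to\infty$, but it is omitted.
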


\begin{proof}
Set $t : =\fpt(A_\m; (f))$.
Let $E \subseteq \N_{\ge 1}$ be the set of all integers $e>0$ such that the $e$-th digit $t^{(e)}$ of $t$ in base $p$ is not $p-1$.
By Lemma \ref{qadic} (3) and the assumption (b), the set $E$ is an infinite set.
Fix an integer $e \in E$.

For any element $\lambda=(\lambda_1, \dots, \lambda_d) \in (\N_{\ge 0})^d$, we denote by $x^{\lambda}$ the monomial $\prod_{i=1}^d x_i^{\lambda_i}$.
We also define 
\[
I_e : =\{ \lambda =(\lambda_1, \dots, \lambda_d) \in (\N_{\ge 0})^d \mid  0 \le \lambda_i \le p^e-1\}.
\]
Consider the integer $\nu : = \nu_f(p^e)$ as in Lemma \ref{fpt qadic}.
We note that it follows from Lemma \ref{fpt qadic} that we have 
\[
\nu \equiv t^{(e)} (\mod p)
\]
and hence, $\nu +1$ is not divisible by $p$.
By the definition of $\nu_f(p^e)$, we have 
\[
f^{\nu} \not\in \m^{[p^e]}.
\]
Since $A/\m^{[p^e]}$ is a $k$-vector spaced based by $\{ x^{\lambda}\}_{\lambda \in I_e}$, there exists $\lambda=(\lambda_1, \dots, \lambda_d) \in I_e$ such that the coefficient of $f^\nu$ at $x^{\lambda}$ is non-zero.
We define the monomial $\alpha_e$ by
\[
\alpha_e : = \prod_{i=1}^d x_i^{p^e-1-\lambda_i} \in A.
\]
Since $\deg (x^\lambda)=\deg(f^{\nu}) = \nu D$, it follows from Lemma \ref{fpt qadic} that the degree of $\alpha_e$ satisfies
\begin{eqnarray*}
\deg(\alpha_e) &=& p^e(\sum_{i=1}^d a_i -\frac{\nu D}{p^e}) - \sum_{i=1}^d a_i \\
&>& p^e( \sum_{i=1}^d a_i -t D) - \sum_{i=1}^d a_i.
\end{eqnarray*}
In particular, by the assumption (a), we have 
\[
\lim_{E \ni e \to \infty} \deg(\alpha_e) = \infty.
\]

Let $E' \subseteq E$ be the subset consists of all $e \in E$ such that $\deg(\alpha_e) \neq D$.
We note that $E'$ is an infinite set since $\lim_{e \to \infty} \deg(\alpha_e)=\infty$.
Fix an integer $e \in E'$ and consider $\nu = \nu_f(p^e)$.
Set $g_e : = f +\alpha_e \in A$.
Since we have
\[
\lim_{E' \ni e \to \infty} \deg(g_e-f) =\lim_{E' \ni e \to \infty} \deg (\alpha_e) =\infty,
\]
it follows from \cite[Corollary 3.4]{BMS09} that 
\[
\lim_{E' \ni e \to \infty} \fpt(A_\m; (g_e))=t.
\]

On the other hand, since $\deg(f) \neq \deg(\alpha_e)$, the decomposition
\[
g_e^{\nu+1} = \sum_{i=0}^{\nu +1} \binom{\nu+1}{i} f^{\nu+1-i} \alpha_e^i
\]
is the homogeneous decomposition, where $\binom{\nu+1}{i}$ is the binomial coefficient.
Since a homogeneous component $\binom{\nu+1}{1} f^{\nu} \alpha_e$ is not contained in the homogeneous ideal $\m^{[p^e]}$, nor is $g_e^{\nu+1}$.
Therefore, we have
\[
\nu_{g_e}(p^e) \ge \nu_f(p^e) +1.
\]
It follows from Lemma \ref{fpt qadic} that we have
\[
t < \fpt(A_\m; (g_e)),
\]
which proves that $t$ is an accumulation point of $\{\fpt(A_\m; (g_e))\}_{e \in E'}$.
\end{proof}

\begin{cor}[Theorem \ref{1.3}]\label{Counter ex}
For any prime number $p$, we have
\[
\Acc(\TT_{2,p}) \not\subseteq \TT_{1,p}.
\]
\end{cor}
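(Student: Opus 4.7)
The plan is to invoke Proposition \ref{making accumulation} with a specific homogeneous polynomial $f \in k[x_1, x_2]$ (over an appropriate $F$-finite field $k$ of characteristic $p$, equipped with a weighted grading with weights $a_1, a_2$) whose $F$-pure threshold $t := \fpt(A_\m; (f))$ satisfies, in addition to the hypotheses (a) $t < s := (a_1+a_2)/D$ and (b) $t \notin \Z[1/p]$ of that proposition, the further property (c) $t \ne 1/m$ for every positive integer $m$. Granted such an $f$, Proposition \ref{making accumulation} yields $t \in \Acc(\FPT^{\mathrm{pr}}(A_\m)) \subseteq \Acc(\TT_{2,p})$; combining (b) and (c) with Lemma \ref{T basic}(1), which identifies $\TT_{1,p} = \{0\} \cup \{1/m : m \ge 1\}$, shows $t \notin \TT_{1,p}$, which is the claim.

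First I would rule out $p \nmid D$: if $p \nmid D$, then by Lemma \ref{graded fpt} either $t = s$, violating (a), or $t = \qadic[p]{s}{L}$ for some $L$, which lies in $\Z[1/p]$ and violates (b). So the search is restricted to polynomials whose degree is a multiple of $p$. My next step is to try $f$ of the form $g \cdot h^p$ for homogeneous $g, h$: the identity $\fpt(h^p) = \fpt(h)/p$ transports any non-$\Z[1/p]$ arithmetic of $\fpt(h)$ to $\fpt(h^p)$, and hence to the upper bound $\fpt(f) \le \fpt(h^p)$. For instance, for primes $p$ admitting an $n \ge 3$ with $p \equiv 1 \pmod n$ and $\gcd(n,p) = 1$, taking $h = x^n + y^n$ gives $\fpt(h) = 2/n$ by the Hernandez-type formula underlying Lemma \ref{graded fpt}, so $\fpt(h^p) = 2/(np)$ is neither in $\Z[1/p]$ (as $\gcd(n, p) = 1$ and $n \ge 3$) nor of the form $1/m$ (when $p \ge 3$, since $np/2 \notin \Z$). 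The factor $g$ is then introduced so as to force $s_f = (a_1+a_2)/\deg(f)$ strictly exceeding $\fpt(f)$, thereby securing (a).

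The main obstacle is the exact computation of $t$ needed to verify (a), (b), (c) simultaneously: the a priori upper bound $\fpt(g h^p) \le \fpt(h^p)$ does not pin down $t$ nor establish strict inequalities, so one has to analyze $\nu_f(p^e)/p^e = \qadic[p]{t}{e}$ directly via Lemma \ref{fpt qadic} and identify the eventually periodic base-$p$ expansion of $t$ using Lemma \ref{qadic}(2). A secondary obstacle is to build a construction that handles all primes uniformly, including small ones like $p = 2, 3, 5$ for which no $n \ge 3$ dividing $p - 1$ is available and the above Fermat-type template fails; for these primes, an alternative polynomial (for example, a suitable configuration of lines with prescribed multiplicities in special position) must be substituted and its $F$-pure threshold verified by direct computation.
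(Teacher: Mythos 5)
Your high-level strategy is correct and matches the paper: the result does follow from Proposition~\ref{making accumulation} once one exhibits a homogeneous $g$ satisfying (a) $\fpt(A_\m;(g))<\sum a_i/\deg g$, (b) $\fpt(A_\m;(g))\notin\Z[1/p]$, and (c) $\fpt(A_\m;(g))\ne 1/m$, and the only way to satisfy (a) and (b) simultaneously while retaining control of $\fpt$ is to work with a \emph{non-reduced} polynomial. The paper indeed takes $g=f^m$ for a suitable reduced quartic $f$.

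However, two points in your plan are actually wrong, not merely incomplete. First, the restriction to $p\mid D$ is not necessary and in fact the paper's own construction violates it: $\deg(f^m)=4m$ need not be divisible by $p$. Lemma~\ref{graded fpt} only constrains \emph{reduced} polynomials, and once $g$ is a proper power it silently leaves the lemma's scope regardless of the arithmetic of $\deg g$; so ``degree divisible by $p$'' is a red herring. Second, the Fermat template $h=x^n+y^n$ with $\fpt(h)=2/n$ has its $F$-pure threshold equal to the \emph{maximal} value $\sum a_i/\deg h$, and raising to the $p$-th power preserves this: $\fpt(h^p)=2/(np)$ while $\sum a_i/\deg(h^p)=2/(np)$ as well, so (a) fails with equality rather than strict inequality. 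Inserting a positive-degree factor $g$ can only \emph{decrease} the target bound $\sum a_i/\deg(gh^p)=2/(\deg g+np)$, so it works against you; it cannot ``force $s_f$ to strictly exceed $\fpt(f)$'' as you claim. The direction is exactly backwards.

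The gap you honestly flag as ``the main obstacle'' — pinning down $\fpt$ exactly — is precisely where the proof lives, and the paper closes it with a device your sketch never reaches for: choosing the four lines of $f=xy(x+y)(x+uy)$ in \emph{special position} by solving a polynomial identity $H(u)=0$ so that $\nu_f(p)<(p-1)/2$, whence Lemma~\ref{graded fpt} forces $\fpt(f)=(p-1)/(2p)$ strictly below $1/2$. This gives (a) with a known strict gap that survives raising to an $m$-th power. The paper then takes $m$ coprime to $(p-1)/2$ and not a power of $p$, which is also the opposite of your choice $m=p$: an exponent that is a $p$-power keeps $\fpt(f^m)$ inside $\Z[1/p]$ whenever $\fpt(f)\in\Z[1/p]$, killing (b). The separate treatment of $p=2,3$, which you correctly anticipate, is carried out in the paper with explicit polynomials of weighted degree tuned to the same mechanism. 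As it stands your proposal identifies the right proposition but cannot be completed along the route sketched.
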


\begin{proof}
We first consider the case where $p \ge 5$.
We define $a_p : = (p-1)/2 \in \Z$.
Let $k$ be an algebraic closure $\overline{\F_p}$ of $\F_p$, $R : =k[z]$ be a polynomial ring and $B:=R[x,y]$ be a polynomial ring over $R$ with two variables $x,y$.
Consider a polynomial
\[
G(x,y,z):= xy(x+y)(x+zy) \in B.
\]
Let $H(z) \in R$ be the coefficient of the power $G(x,y,z)^{a_p} \in R[x,y]$ at the monomial $x^{p-1}y^{p-1}$.
Since $H(z) \in k[z]$ is a non-constant monic polynomial and $k$ is algebraically closed, there exists an element $u \in k$ such that $H(u)=0$.
We note that $u$ is not $0$ because the coefficient of the polynomial $G(x,y,0)^{a_p}$ at the monomial $x^{p-1}y^{p-1}$ is $1$.
Similarly, we have $u \neq 1$.

Let $A:=k[x,y]$ be a polynomial ring equipped with a structure of $\N_{\ge 0}$-graded ring such that $x$ and $y$ are homogeneous of degree $1$ and $\m:=(x,y) \subseteq A$ be the homogeneous maximal ideal.
We consider a homogeneous polynomial 
\[
f(x,y) := G(x,y,u) =xy(x+y)(x+uy) \in A.
\]
Then it follows from the definition of $u$ that 
\[
\nu_f(p) < a_p.
\]
By Lemma \ref{fpt qadic} and \ref{qadic} (1), we have
\[
\fpt(A_\m; (f)) \le \frac{a_p}{p}=\qadic[p]{\frac{1}{2}}{1}.
\]
Therefore, Lemma \ref{graded fpt} implies that 
\[
\fpt(A_\m; (f)) = \frac{a_p}{p}
\]

Take an integer $m>0$ such that $m$ is not a power of $p$ and is coprime to the integer $a_p$.
Then the homogeneous polynomial $g : =f^m$ satisfies the conditions in Proposition \ref{making accumulation}.
Therefore, we have
\[
\fpt(A_\m; (g)) =\frac{a_p}{pm} \in \Acc(\TT_{2,p}).
\]
On the other hand, since $m$ is coprime to $a_p$, one has
\[
\frac{a_p}{pm} \not\in \TT_{1,p} =\{0\} \cup \{1/n \mid n \in \N_{\ge 1} \},
\]
which proves the corollary when $p \ge 5$.

In the case where $p=3$, we consider a polynomial
\[
f(x,y) : = xy(x+y)(x-y)(x^3-xy^2-y^3) \in \F_3[x,y] = :A,
\]
where we set $\deg(x)=\deg(y)=1$.
We see that $f^2 \in \m^{[9]}$, which implies $\fpt(A_\m;(f)) \le 2/9$.
Then it follows from Lemma \ref{graded fpt} that $\fpt(A_\m; (f))=2/9$.
In particular, we have
\[
\fpt(A_\m; (f^5)) = \frac{2}{45} \in \Acc(\TT_{2,3}) \setminus \TT_{1,3}.
\]

When $p=2$, we consider a polynomial
\[
f(x,y) : = xy(x^4+x^2y +y^2)  \in \F_2[x,y] =: A,
\]
where we set $\deg(x)=1$ and $\deg(y)=2$.
Then we see that 
\[
\fpt(A_\m; (f^5)) = \frac{3}{40} \in \Acc(\TT_{2,2}) \setminus \TT_{1,2}.
\]
\end{proof}

\section{Positive results}
\begin{defn}[cf. \cite{Per13}]
Let $(R,\D)$ be a strongly $F$-regular pair and $\fa_1, \dots, \fa_l$ be non-zero ideals.
We define the \emph{strongly $F$-regular region} of $\fa_1, \dots, \fa_l$ with respect to $(R,\D)$ by
\[
\begin{split}
\SFRR(R,\D; \fa_1, \dots, \fa_l)  : = & \left\{ (t_1, \dots, t_l) \in \R_{\ge 0}^l \ \middle|\  \begin{array}{l}  (R, \D, \fa_1^{t_1} \cdots \fa_l^{t_l}) \textup{ is strongly} \\ \textup{$F$-regular} \end{array} \right\} \\
\subseteq & \ \R^l_{\ge 0} .
\end{split}
\]
\end{defn}

\begin{rem}\label{SFRR and fpt}
Let $(R,\D)$ be a strongly $F$-regular pair, $\fa_1, \dots, \fa_l \subsetneq R$ be non-zero proper ideals and $\lambda_1, \dots, \lambda_l \ge 0$ be integers.
Set $\fb : = \prod_{i=1}^l \fa_i^{\lambda_i}$ and $\lambda : =(\lambda_1, \dots, \lambda_l) \in \Z^l$.
Then we have
\[
\fpt(R,\D;\fb) = \sup\left\{ t \ge 0 \mid t \cdot \lambda \in \SFRR(R,\D; \fa_1, \dots, \fa_l) \right\}.
\]
\end{rem}

\begin{defn}
Let $(R,\D)$ be a strongly $F$-regular pair, $\m \subseteq R$ be the maximal ideal, and $\fa \subsetneq R$ be a proper ideal.
We say that $\fa$ is \emph{of vertical type} with respect to $(R,\D)$ if there exists a real number $\delta >0$ such that 
\[
\fpt(R, \D, \m^{\delta}; \fa)=\fpt(R, \D; \fa).
\]
\end{defn}

\begin{prop}\label{vertical basic}
Let $(R,\D)$ be a strongly $F$-regular pair such that $(p^e-1)(K_R+\D)$ is Cartier for some integer $e>0$ and $\fa \subsetneq R$ be a non-zero proper ideal.
Set $t_0 : =\fpt(R,\D;\fa)$.
Then the following are equivalent.
\begin{enumerate}
\item[$(\textup{i})$] $\fa$ is of vertical type with respect to $(R,\D)$.
\item[$(\textup{ii})$] $\SFRR(R,\D; \fa, \m) \cap \ell \neq \emptyset$ for any line $\ell \subseteq \R^2$ such that $(t_0,0) \in \ell$ and the slope of $\ell$ is negative.
\item[$(\textup{iii})$] $\SFRR(R,\D; \fa, \m) \cap \ell_n \neq \emptyset$ for all integer $n>0$, where $\ell_n \subseteq \R^2$ is the line such that $(t_0,0), (0,n) \in \ell_n$.
\item[$(\textup{iv})$] $\fpt(R,\D; \fa+ \m^n) > t_0$ for all integers $n>0$.
\end{enumerate}
\end{prop}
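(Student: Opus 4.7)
My plan is to establish the equivalences via the cycle $(\textup{i})\Rightarrow(\textup{iv})\Leftrightarrow(\textup{iii})\Leftrightarrow(\textup{ii})$ together with the converse $(\textup{iv})\Rightarrow(\textup{i})$. The main ingredients are the sum formula (Lemma \ref{sum}), monotonicity of test ideals (Lemma \ref{test basic} (2)), right-continuity (Lemma \ref{test basic} (3)), and the generator bound (Lemma \ref{test basic} (4)) applied to $\m$, which is generated by $d=\dim R$ elements.

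The equivalence $(\textup{ii})\Leftrightarrow(\textup{iii})$ is elementary: for any line $\ell$ through $(t_0,0)$ of slope $-s$ with $s>0$, choose an integer $n>s$; a point $(x_0,y_0)\in\ell_n\cap\SFRR$ from $(\textup{iii})$ satisfies $y_0=n(t_0-x_0)\ge s(t_0-x_0)$, so the corresponding point $(x_0,s(t_0-x_0))\in\ell$ lies in $\SFRR$ by monotonicity. The equivalence $(\textup{iii})\Leftrightarrow(\textup{iv})$ is channelled through the sum formula, which identifies $\fpt(R,\D;\fa+\m^n)$ with $\sup\{u+v\mid (u,nv)\in\SFRR\}$. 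Given $(\textup{iv})$, a decomposition $(u_n,nv_n)\in\SFRR$ with $u_n+v_n>t_0$ must satisfy $u_n\le t_0$ (otherwise monotonicity would make $\fpt(R,\D;\fa)$ strictly larger than $t_0$), and then $(u_n,n(t_0-u_n))\in\ell_n\cap\SFRR$ by monotonicity in the second coordinate. Conversely, $(x_n,y_n)\in\ell_n\cap\SFRR$ can be bumped slightly to $(x_n+\epsilon,y_n+\epsilon)\in\SFRR$ by right-continuity, whence the sum formula gives $\fpt(R,\D;\fa+\m^n)\ge t_0+\epsilon(1+1/n)>t_0$. The implication $(\textup{i})\Rightarrow(\textup{iv})$ is analogous: for any $t<t_0$, $(\textup{i})$ gives $(t,\delta)\in\SFRR$, so the decomposition $u=t$, $v=\delta/n$ produces $\fpt(R,\D;\fa+\m^n)\ge t+\delta/n$, and letting $t\to t_0^-$ yields $\fpt(R,\D;\fa+\m^n)\ge t_0+\delta/n>t_0$.

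The main difficulty is $(\textup{iv})\Rightarrow(\textup{i})$. From $(\textup{iv})$ and the sum formula I obtain, for each $n$, a point $(u_n,nv_n)\in\SFRR$ with $u_n\le t_0$ and $u_n+v_n>t_0$; Lemma \ref{test basic} (4) applied to $\m$ forces $nv_n<d$, hence $v_n<d/n$ and $u_n\to t_0$. Passing to a subsequence with $nv_n\to y^*\in[0,d]$: if $y^*>0$, then for any $\delta\in(0,y^*)$ and large $n$, monotonicity gives $(u_n,\delta)\in\SFRR$, and the convergence $u_n\to t_0$ propagates this to $(t,\delta)\in\SFRR$ for every $t<t_0$, which is $(\textup{i})$. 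The delicate step is ruling out $y^*=0$: a priori the boundary of $\SFRR$ near $(t_0,0)$ could exhibit a vertical tangent without a genuine vertical segment, which would remain consistent with $(\textup{iv})$ while violating $(\textup{i})$. My plan is to exclude this pathology by invoking the polyhedral/piecewise-linear structure of $F$-jumping walls in two-parameter families that underlies the theory of the strongly $F$-regular region developed in \cite{Per13}: this structural fact forces the boundary near $(t_0,0)$ to be a line segment, and the hypothesis that lines $\ell_n$ of unboundedly steep negative slope all meet $\SFRR$ then forces that segment to be vertical, producing a uniform $\delta>0$ and establishing $(\textup{i})$.
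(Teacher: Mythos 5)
Your handling of the equivalences $(\textup{ii})\Leftrightarrow(\textup{iii})\Leftrightarrow(\textup{iv})$ via the sum formula and monotonicity, the direct implication $(\textup{i})\Rightarrow(\textup{iv})$, and the reduction of $(\textup{iv})\Rightarrow(\textup{i})$ to the dichotomy $y^*>0$ versus $y^*=0$ (using Lemma \ref{test basic}(4) to force $nv_n<d$ and $u_n\to t_0$) are all correct and essentially parallel to what the paper does for the easier implications. You also correctly identify the crux: ruling out the pathological case in which the boundary of $\SFRR$ has a vertical tangent at $(t_0,0)$ without containing a vertical segment. This is exactly where the paper invests its real effort.

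However, your plan for closing that gap --- invoking a polyhedral/piecewise-linear structure of $\SFRR(R,\D;\fa,\m)$ near $(t_0,0)$ via \cite{Per13} --- is not justified in the generality needed. The proposition is stated for an arbitrary strongly $F$-regular pair $(R,\D)$ with $(p^e-1)(K_R+\D)$ Cartier, and it is applied in Theorem \ref{accum characterize} to a catapower $\cata{R}$ equipped with a flat-pullback divisor $\cata{\D}$; Pérez's structure theorem on constancy regions of mixed test ideals is established for regular rings (without a boundary divisor), and no result in this paper or in \cite{Per13} supplies the local polyhedral structure of $\SFRR$ near boundary points in the pair setting. The paper instead proves the contrapositive of $(\textup{ii})\Rightarrow(\textup{i})$ by a quantitative argument: it expands the rational number $t_0$ in base $q=p^e$ (using \cite[Theorem B]{ST14b} for rationality and Lemma \ref{qadic}(2) to make the digits eventually constant equal to some $a\ne 0$), sets $x_n:=\qadic{t_0}{n}$ and $y_n:=\fpt(R,\D,\fa^{x_n};\m)$, and invokes the Lipschitz-type estimate $y_{n+1}\ge y_n - N/q^n$ from \cite[Corollary 5.5]{Sat19a} --- a result valid precisely in the pair setting of the proposition. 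Summing this estimate gives $y_n\le N/(q^{n-1}(q-1))$, and combining this with the geometric decay $t_0-x_n=a/(q^n(q-1))$ produces an explicit line of finite negative slope through $(t_0,0)$ that avoids $\SFRR$. So the mechanism that excludes the vertical-tangent pathology is a uniform continuity bound on $F$-pure thresholds, not a polyhedrality theorem; your proof as written has a genuine gap at this step, and filling it would require either establishing the needed polyhedral structure for pairs or substituting the Lipschitz estimate that the paper actually uses.
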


\begin{proof}
The equivalence (ii) $\Leftrightarrow$ (iii) is obvious.
If there exists a real number $\delta>0$ such that $\fpt(R, \D, \m^{\delta}; \fa)= t_0$ and $\delta< \fpt(R,\D; \m)$, then it follows from Lemma \ref{F-sing basic} that for every real number $0<t<t_0$, we have $(t, \delta) \in \SFRR(R,\D; \fa, \m)$, which implies the implication (i) $\Rightarrow$ (ii).

It follows from Lemma \ref{F-sing basic} and Lemma \ref{test basic} (1) that the assertion in (iv) is equivalent to the equations $\tau(R,\D, (\fa+\m^n)^{t_0})=R$ for all $n>0$.
By Lemma \ref{sum}, we have 
\[
\tau(R,\D, (\fa+\m^n)^{t_0}) = \sum_{(t,s) \in \ell_n \cap \R_{\ge0}^2 } \tau(R,\D,\fa^t \m^s),
\]
which implies the equivalence (iv) $\Leftrightarrow$ (iii).

For the implication (ii) $\Rightarrow$ (i), we assume that $\fa$ is not of vertical type and we will prove that there exists a line $\ell \subseteq \R^2$ such that  $(t,0) \in \ell$, the slope is negative and $\SFRR(R,\D;\fa, \m) \cap \ell = \emptyset$.

Let $0 \le t_0^{(n)} \le q-1$ be the $n$-th digit of $t_0$ in base $q : = p^e$.
Noting that $t_0$ is a rational number (\cite[Theorem B]{ST14b}), it follows from Lemma \ref{qadic} (2) that after replacing $e$ by its multiple, we may assume that $t_0^{(n)}$ is non-zero and constant for $n \ge 2$ .
Set $a:= t_0^{(2)}$.
For any integer $n \ge 0$, we set $x_n : = \qadic{t_0}{n} $.
By Lemma \ref{qadic} (1), we have 
\[
x_{n+1}=x_n + a/q^{n+1}
\]
for every $n \ge 1$ and $\lim_{n \to \infty} x_n=t_0$.

Set $y_n : = \fpt(R, \D, \fa^{x_n}; \m)$ for every integer $n \ge 1$.
Since $\fa$ is not of vertical type, we can see that $\lim_{n \to \infty} y_n =0$.
On the other hand, by \cite[Corollary 5.5]{Sat19a}, there exists an integer $N>0$ such that
\[
y_{n+1} \ge y_n -N/q^n
\]
for any integer $n \ge 0$.

Let $\ell' \subseteq \R^2$ be the line such that $(t_0,0) \in \ell'$ and the slope is $-qN/a$, that is, 
\[
\ell' : =\left\{ (x,y) \in \R^2 \mid y=- \frac{qN}{a} (x-t_0) \right\}. 
\]
Let $y_n'$ be the real number such that $(x_n, y_n') \in \ell'$.
Then for every $n \ge 1$, we can see that $y_n \le y_n'$, which implies $(x_n, y_n') \not\in \SFRR(R,\D; \fa, \m)$.

Let $\ell''$ be the line which passes through $(t_0,0)$ and $(x_n, y_{n+1}')$ for all $n \ge 1$, that is,
\[
\ell'' : =\left\{ (x,y) \in \R^2 \mid y=- \frac{q^2N}{a} (x-t_0) \right\}. 
\]
Then for any point $(x, y) \in \ell'' \cap \R_{\ge 0}^2$ with $x \ge x_1$, there exists $n \ge 1$ such that $x \ge x_n$ and $y \ge y_n'$.
Since $(x_n, y'_n)$ is not contained in $\SFRR(R,\D;\fa,\m)$, nor is $(x,y)$.

Finally, let $M>q^2N/a$ be a real number and $\ell$ be the line such that $(t_0, 0) \in \ell$ and the slope is $-M$. 
Since the slope $-M$ of $\ell$ is smaller than that of $\ell''$, for any point $(x,y) \in \ell$ with $x \ge x_1$, we have $(x,y) \not\in \SFRR(R,\D;\fa, \m)$.
On the other hand, if $M$ is sufficiently large, then for every point $(x,y) \in \ell$ with $x<x_1$, we have $y \ge \fpt(R,\D;\m)$, which implies $(x,y) \not\in \SFRR(R,\D;\fa,\m)$.
Therefore we have $\SFRR(R,\D;\fa, \m) \cap \ell = \emptyset$, as desired.
\end{proof}

\begin{thm}\label{accum characterize}
Let $(R,\D)$ be a strongly $F$-regular pair such that $(p^e-1)(K_R+\D)$ is Cartier for some integer $e>0$, $t>0$ be a real number and $\cata{\D}$ be the flat pullback of $\D$ to the catapower $\cata{R}$.
Then the following hold.
\begin{enumerate}
\item[$(1)$] $t$ is an accumulation point of the set $\FPT(R,\D)$ if and only if there exists a non-zero ideal $\fa \subsetneq \cata{R}$ such that $t= \fpt(\cata{R}, \cata{\D}; \fa)$ and $\fa$ is of vertical type with respect to $(\cata{R}, \cata{\D})$.
\item[$(2)$] If $t$ is an accumulation point of the set $\FPT^{\mathrm{pr}}(R,\D)$, then there exists a non-zero principal ideal $\fa \subsetneq \cata{R}$ such that $t= \fpt(\cata{R}, \cata{\D}; \fa)$ and $\fa$ is of vertical type with respect to $(\cata{R}, \cata{\D})$.
\end{enumerate}
\end{thm}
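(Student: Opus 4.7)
The plan is to use the ultraproduct-to-catapower passage (Lemma \ref{cata fpt}) to identify accumulation points of $\FPT(R,\D)$ with ideals in $\cata{R}$ of vertical type, with the characterization of vertical type via Proposition \ref{vertical basic}(iv): an ideal $\fa$ is of vertical type iff $\fpt(\cata{R},\cata{\D};\fa + \cata{\m}^n) > \fpt(\cata{R},\cata{\D};\fa)$ for every integer $n \ge 1$, where $\cata{\m}$ denotes the maximal ideal of $\cata{R}$.

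For the sufficiency direction of (1), suppose $\fa = [\fa_m]_m \subsetneq \cata{R}$ is of vertical type with $\fpt(\cata{R},\cata{\D};\fa) = t$, and set $s_n := \fpt(\cata{R},\cata{\D};\fa + \cata{\m}^n)$. Proposition \ref{vertical basic}(iv) gives $s_n > t$, while Lemma \ref{cata fpt}(3) applied to the sequence $\{\fa_m + \m^n\}_m$ (which contains $\m^n$) realizes each $s_n$ as $\fpt(R,\D;\fa_m + \m^n) \in \FPT(R,\D)$ for some $m$. To finish I would show $s_n \to t$: expanding via Lemma \ref{sum},
\[
\tau(\cata{R},\cata{\D},(\fa+\cata{\m}^n)^s) = \sum_{u+v=s} \tau(\cata{R},\cata{\D},\fa^u \cata{\m}^{nv}),
\]
so equality to $\cata{R}$ forces (by locality) some summand to be $\cata{R}$, i.e., a point $(u,nv) \in \SFRR(\cata{R},\cata{\D};\fa,\cata{\m})$ with $u+v=s$. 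Since $\SFRR$ is contained in $[0,t]\times[0,\fpt(\cata{R},\cata{\D};\cata{\m})]$, for fixed $s > t$ the constraint $u \le t$ forces $nv \ge n(s-t)$, exceeding $\fpt(\cata{R},\cata{\D};\cata{\m})$ once $n$ is large; hence $s_n \le s$ eventually, and letting $s \downarrow t$ gives $s_n \to t$.

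For the necessity directions in (1) and (2), take a sequence $\{\fa_m\}$ (principal in (2)) with $t_m := \fpt(R,\D;\fa_m) \to t$ and $t_m \neq t$, and pass to a monotone subsequence. In the \emph{from above} case ($t_m > t$), set $\fa := [\fa_m]_m$, which is principal whenever each $\fa_m$ is; Lemma \ref{cata fpt}(2) gives $\fpt(\cata{R},\cata{\D};\fa) = t$, and Lemma \ref{cata fpt}(3) together with $\fa + \cata{\m}^n \supseteq \fa$ yields, for every $n$, an $m$ with $\fpt(\cata{R},\cata{\D};\fa + \cata{\m}^n) = \fpt(R,\D;\fa_m + \m^n) \ge t_m > t$, so Proposition \ref{vertical basic}(iv) confirms vertical type. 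The main obstacle is the \emph{from below} case ($t_m < t$): the naive ultralimit has $\fpt(\cata{R},\cata{\D};\fa) = t$, but Lemma \ref{cata fpt}(3) only matches $\fpt(\cata{R},\cata{\D};\fa + \cata{\m}^n)$ with one individual $\fpt(R,\D;\fa_{m_0} + \m^n)$, which may equal $t$ rather than exceeding it, so vertical type can genuinely fail for the ultralimit. I expect to resolve this by invoking the ascending chain condition for the relevant FPT set (cf.\ \cite{Sat19a,Sat19b}) to forbid strictly increasing accumulation sequences, thereby reducing every accumulation point to the \emph{from above} case; this reduction stays within principal ideals and so is compatible with the constraint needed for part (2).
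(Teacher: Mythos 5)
Your proposal matches the paper's proof essentially step for step: the reduction to the ``from above'' case via the ascending chain condition for $\FPT^{\mathrm{pr}}(R,\D)$, the construction $\fa = [\fa_m]_m$, the use of Lemma \ref{cata fpt}(2)--(3) to transfer thresholds between $R$ and $\cata{R}$, and the appeal to Proposition \ref{vertical basic}(iv) to detect vertical type all appear in the same roles. Your explicit argument via Lemma \ref{sum} and the boundedness of $\SFRR(\cata{R},\cata{\D};\fa,\cata{\m})$ to show $s_n \to t$ in the sufficiency direction is a welcome elaboration of a convergence the paper asserts without further comment.
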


\begin{proof}
We first prove the assertion in (2).
We assume that $t$ is an accumulation point of $\FPT^{\mathrm{pr}} (R,\D)$.
Then there exists a sequence of proper principal ideals $\{\fa_m=(f_m)\}_{m \in \N} \subsetneq R$ such that $t_m : = \fpt(R, \D; \fa_m)$ satisfies $t_m \neq t$ for every $m \in \N$ and $\lim_{m \to \infty} t_m=t$. 
Since the set $\FPT^{\mathrm{pr}}(R,\D)$ satisfies the ascending chain condition (\cite{Sat19a}), after replacing by a subsequence, we may assume that $t_m > t$ for every $m$.

Set $\fa : = [ \fa_m ]_m \subseteq \cata{R}$.
We note that $\fa$ is a principal ideal generated by $\catae{f_m} \in \cata{R}$.
By Lemma \ref{cata fpt} (2), we have $t=\fpt(\cata{R}, \cata{\D}; \fa)$.
Let $\cata{\m} \subseteq \cata{R}$ be the maximal ideal.
For every integer $n>0$, since we have $\fa+ \cata{\m}^n = [ \fa_m+\m^n]_m$, it follows from Lemma \ref{cata fpt} (3) that there exists $m \in \N$ such that 
\[
\fpt(\cata{R}, \cata{\D}; \fa+\cata{\m}^n) = \fpt(R, \D; \fa_m+\m^n) \ge \fpt(R,\D; \fa_m) > t .
\]
Therefore, it follows from Proposition \ref{vertical basic} that $\fa$ is of vertical type, which complete the proof of (2).
The proof of the only if part of (1) is similar.

Next, we consider the if part of (1).
We assume that there exists a non-zero proper ideal $\fa \subseteq \cata{R}$ such that $t = \fpt(\cata{R}, \cata{\D}; \fa)$ and $\fa$ is of vertical type.
Then it follows from Proposition \ref{vertical basic} that $t_n : = \fpt(\cata{R}, \cata{\D}; \fa+\cata{\m}^n) >t$ for every $n>0$. 
By considering a generator of $\fa$, we can construct a sequence of non-zero ideals $\{ \fa_m \}_m$ such that $\fa = \catae{\fa_m }$.
By Lemma \ref{cata fpt} (3), for every $n>0$, there exists an integer $m_n \in \N$ such that  
\[
t_n=\fpt(R,\D; \fa_{m_n}+\m^n) \in \FPT(R,\D).
\]
Since $t = \lim_{n \to \infty} t_n$, we conclude that $t$ is an accumulation point of the set $\FPT(R,\D)$.
\end{proof}



\begin{lem}\label{isolated non-vert}
Let $(R,\D)$ be a strongly $F$-regular pair such that $(p^e-1)(K_R+\D)$ is Cartier for some integer $e>0$, $\fa$ be a non-zero proper principal ideal and $t : = \fpt(R,\D; \fa)$.
If the test ideal $\tau(R,\D, \fa^t)$ is $\m$-primary and the denominator of $t$ is not divisible by $p$, then $\fa$ is not of vertical type with respect to $(R,\D)$.
\end{lem}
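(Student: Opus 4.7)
The plan is to negate condition (iv) of Proposition \ref{vertical basic}, i.e.\ to produce an integer $n > 0$ with $\fpt(R,\D;\fa+\m^n) = t$. Since $\fpt(R,\D;\fa+\m^n) \geq t$ automatically, by Lemmas \ref{F-sing basic} and \ref{test basic}(1) this reduces to finding $n$ with $\tau(R,\D,(\fa+\m^n)^t) \neq R$. Applying the sum formula (Lemma \ref{sum}),
\[
\tau(R,\D,(\fa+\m^n)^t)=\sum_{u+v=t,\,u,v\ge 0}\tau(R,\D,\fa^u\m^{nv}),
\]
so it suffices to show that, for a suitable $n$, every summand is contained in $\m$.

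The easy cases are $v=0$ (in which the summand is $\tau(R,\D,\fa^t)$, proper by the $\m$-primary hypothesis) and $v\ge\mu(\m)/n$ (in which Lemma \ref{test basic}(4) applied to $\fb=\m$ gives containment in $\m$). The delicate range is $0<v<\mu(\m)/n$: there $u=t-v$ lies just below $t$, so $\tau(R,\D,\fa^u)=R$ by strong $F$-regularity below the threshold, and the factor $\m^{nv}$ alone is too weak to force properness. To handle this range I would exploit the $p$-coprime denominator of $t$: by Lemma \ref{qadic}(2) choose $q=p^e$ with $(q-1)(K_R+\D)$ Cartier and $(q-1)t\in\Z$, and set $M:=(q-1)t$, so that a direct computation of the $q$-adic truncations gives the recursion $q\qadic{t}{k+1}=\qadic{t}{k}+M$. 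Combining Lemma \ref{test basic}(5) (Frobenius compatibility) with Skoda applied to the principal ideal $\fa$ (Lemma \ref{test basic}(4) with $l=1$) yields, by induction on $k$, the closed-form identity
\[
\tau(R,\D,\fa^{\qadic{t}{1}}\m^{s})=\phi^{ek}_{\D}\bigl(F^{ek}_{*}(\fa^{Mq^{k-1}}\,\tau(R,\D,\m^{q^{k}s}))\bigr),
\]
and an analogous (but more involved) expression for general $v$ in the delicate strip.

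The main obstacle is to leverage this identity, together with the $\m$-primary hypothesis $\m^{N_0}\subseteq\tau(R,\D,\fa^t)$, to control the problematic summands uniformly in $v$. My strategy is to combine the telescoped bound $\fpt(R,\D,\fa^{\qadic{t}{k}};\m)\ge\fpt(R,\D,\fa^{\qadic{t}{1}};\m)-N_0'/(q-1)$ provided by \cite[Corollary 5.5]{Sat19a} (the rate-of-decrease estimate along $q$-adic truncations) with an $\m$-primary-improved lower bound on $\fpt(R,\D,\fa^{\qadic{t}{1}};\m)$ obtained from the iterated identity above. Choosing $q$ sufficiently large, the resulting bound stays strictly positive, contradicting the vertical-type assumption (which forces $\fpt(R,\D,\fa^{\qadic{t}{k}};\m)\to 0$). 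The delicate quantitative estimate---making the $\m$-primary-based improvement effective and uniform across the continuous strip $(0,\mu(\m)/n)$, which I expect to achieve via the discreteness of test ideals (Lemma \ref{test basic}(3) and Lemma \ref{discrete}) reducing the check to finitely many representative values of $v$---is the technical heart of the argument.
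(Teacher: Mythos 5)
There is a genuine gap, and in fact the final step of your argument is logically backwards.

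Your reduction via Proposition~\ref{vertical basic}(iv) and the sum formula is correct in principle, and the ``easy'' cases ($v=0$ and $nv\ge\mu(\m)$) are handled correctly. But the ``delicate strip'' $0<v<\mu(\m)/n$ is not a technical nuisance that one can hope to chip away at --- it is the \emph{entire content} of the statement. Showing that every summand $\tau(R,\D,\fa^u\m^{nv})$ with $u=t-v$ in this strip is proper is equivalent, by Lemma~\ref{F-sing basic} and Lemma~\ref{test basic}(1), to showing that the line $\ell_{nt}$ misses $\SFRR(R,\D;\fa,\m)$, which by Proposition~\ref{vertical basic} is precisely the assertion that $\fa$ is not of vertical type. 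So the decomposition does not simplify the problem; it merely restates it, and your proposal never actually closes this loop.

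Worse, the claimed contradiction at the end is inverted. You assert that ``the vertical-type assumption \dots\ forces $\fpt(R,\D,\fa^{\qadic{t}{k}};\m)\to 0$.'' This is the opposite of the truth. If $\fa$ \emph{is} of vertical type, there is $\delta>0$ with $\fpt(R,\D,\m^\delta;\fa)=t$; by Lemma~\ref{F-sing basic}, for every $t'<t$ the triple $(R,\D,\fa^{t'}\m^\delta)$ is strongly $F$-regular, so $\fpt(R,\D,\fa^{t'};\m)>\delta$ is bounded away from zero. It is the \emph{not}-vertical-type hypothesis that forces $\fpt(R,\D,\fa^{t'};\m)\to 0$ as $t'\to t^-$ (this is exactly what the paper uses in the proof of (ii)$\Rightarrow$(i) of Proposition~\ref{vertical basic}). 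Consequently, producing a uniform positive lower bound on $\fpt(R,\D,\fa^{\qadic{t}{k}};\m)$, as you propose, would be evidence that $\fa$ \emph{is} of vertical type --- the opposite of what the lemma asserts --- and contradicts nothing. The quantitative inequality from \cite[Corollary 5.5]{Sat19a} is used in the paper for an entirely different implication and cannot be bent into this role.

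The missing idea is that the hypotheses let you prove the \emph{upper} bound $\fpt(R,\D,\fa^{t'};\m)\to 0$ directly, with no reductio. Pick $M$ with $\m^M\subseteq\tau(R,\D,\fa^t)$ and replace $e$ so that $t(p^e-1)\in\Z$ (this is where $p\nmid$ denominator enters). Uniform $(\D,\fa^t,F)$-compatibility gives $\phi^n\bigl(F^{en}_*(\fa^{t(p^{en}-1)}\tau(R,\D,\fa^t))\bigr)\subseteq\tau(R,\D,\fa^t)\subseteq\m$, while Skoda for the principal ideal $\fa$ (Lemma~\ref{test basic}(4)) and for $\m$ gives $\fa^{t(p^{en}-1)}\m^M\supseteq\tau(R,\D,\fa^{t(p^{en}-1)}\m^{M+r})$ with $r=\dim\m/\m^2$. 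Combining and applying Lemma~\ref{test basic}(5) to ``divide by $p^{en}$'' yields $\tau(R,\D,\fa^{t(p^{en}-1)/p^{en}}\m^{(M+r)/p^{en}})\subseteq\m$, whence $\fpt(R,\D,\m^{(M+r)/p^{en}};\fa)<t$. Letting $n\to\infty$ gives $\fpt(R,\D,\m^\delta;\fa)<t$ for all $\delta>0$, which is directly the negation of vertical type. Your sketch touches the right ingredients (Skoda for principal ideals, Frobenius iteration, the $p$-free denominator of $t$), but assembles them toward the wrong inequality.
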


\begin{proof}
Let $\phi =\phi^e_\D : F^e_* R \to R$ be as in Definition \ref{phi}.
Take an integer $M>0$ such that $\m^M \subseteq \tau(R,\D, \fa^t)$.
Let $r:=\dim(\m/\m^2)$ be the embedding dimension of $R$.
After replacing $e$ by its multiple, we may assume that $t(p^e-1)$ is an integer.

Noting that $\tau(R,\D; \fa^t) \subseteq \m$ and the test ideal is uniformly $(\D,\fa^t, F)$-compatible, we have 
\begin{equation}\label{1}
\phi^n( F^{en}_* (\fa^{t(p^{en}-1)} \cdot \tau(R,\D, \fa^t))) \subseteq \tau(R,\D, \fa^t) \subseteq \m
\end{equation}
for every $n>0$.
On the other hand, since $\fa$ is principal, it follows from Lemma \ref{test basic} (4) that 
\begin{eqnarray*}
\fa^{t(p^{en}-1)} \cdot \tau(R,\D, \fa^t) \supseteq \fa^{t(p^{en}-1)} \cdot \m^M &\supseteq& \fa^{t(p^{en}-1)} \cdot \m^M \cdot \tau(R,\D, \m^r) \\
&\supseteq& \tau(R,\D, \fa^{t(p^{en}-1)} \m^{r+M}).
\end{eqnarray*}
Combining with the inclusion \eqref{1} and Lemma \ref{test basic} (5), we have
\[
\tau(R,\D, \fa^{t(p^{en}-1)/p^{en}} \m^{(M+r)/p^{en}}) \subseteq \m,
\]
which implies $\fpt(R,\D, \m^{(M+r)/p^{en}} ;\fa) <t(p^{en}-1)/ p^{en} <t$ for every $n>0$.
Since $n$ runs through all positive integers, we have $\fpt(R, \D, \m^\delta; \fa) <t$ for every $\delta>0$, as desired.
\end{proof}

\begin{thm}\label{accum on pair}
Let $(R,\D)$ be a $d$-dimensional strongly $F$-regular pair such that $(p^e-1)(K_R+\D)$ is Cartier for some integer $e>0$.
Then we have
\[
\Acc(\FPT^{\mathrm{pr}}(R, \D)) \cap \Z_{(p)} \subseteq \FPT_{<d}^{\mathrm{pr}}(\cata{R}, \cata{\Delta}).
\]
\end{thm}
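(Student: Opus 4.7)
The plan is to combine Theorem \ref{accum characterize} (2) with the contrapositive of Lemma \ref{isolated non-vert} and then exploit the compatibility of test ideals with localization.

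First, since $t \in \Acc(\FPT^{\mathrm{pr}}(R,\D))$, Theorem \ref{accum characterize} (2) produces a non-zero principal ideal $\fa \subsetneq \cata{R}$ such that $t = \fpt(\cata{R}, \cata{\D}; \fa)$ and $\fa$ is of vertical type with respect to $(\cata{R}, \cata{\D})$. By Lemma \ref{cata fpt} (1), $(\cata{R}, \cata{\D})$ is a strongly $F$-regular pair with $(p^e-1)(K_{\cata{R}} + \cata{\D})$ Cartier, so Lemma \ref{isolated non-vert} applies. Since $t \in \Z_{(p)}$ and $\fa$ is of vertical type, the contrapositive of Lemma \ref{isolated non-vert} forces the test ideal $\tau(\cata{R}, \cata{\D}, \fa^t)$ to not be $\cata{\m}$-primary. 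Thus there exists a prime $\p \subsetneq \cata{R}$ with $\p \neq \cata{\m}$ and $\tau(\cata{R}, \cata{\D}, \fa^t) \subseteq \p$.

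Next I would localize at $\p$. By Lemma \ref{test basic} (6),
\[
\tau(\cata{R}_\p, \cata{\D}|_{\cata{R}_\p}, (\fa \cata{R}_\p)^t) \;=\; \tau(\cata{R}, \cata{\D}, \fa^t)\cdot \cata{R}_\p \;\subseteq\; \p\cata{R}_\p,
\]
so the right-hand side is a proper ideal. By Lemma \ref{test basic} (1), the triple $(\cata{R}_\p, \cata{\D}|_{\cata{R}_\p}, (\fa \cata{R}_\p)^t)$ fails to be strongly $F$-regular. This in particular rules out $\fa \cata{R}_\p = \cata{R}_\p$ (for then the triple would be strongly $F$-regular), so $\fa \subseteq \p$ and $\fa \cata{R}_\p$ is a proper non-zero principal ideal. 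Combined with the downward closure from Lemma \ref{test basic} (2) and Lemma \ref{F-sing basic}, this gives
\[
\fpt(\cata{R}_\p, \cata{\D}|_{\cata{R}_\p}; \fa \cata{R}_\p) \;\le\; t.
\]
For the reverse inequality, note that for every $s < t$, $(\cata{R}, \cata{\D}, \fa^s)$ is strongly $F$-regular, hence $\tau(\cata{R}, \cata{\D}, \fa^s) = \cata{R}$, and Lemma \ref{test basic} (6) again implies $\tau(\cata{R}_\p, \cata{\D}|_{\cata{R}_\p}, (\fa\cata{R}_\p)^s) = \cata{R}_\p$. Thus the $F$-pure threshold at $\p$ is at least $s$ for every $s < t$, giving equality.

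Finally, the dimension count: $\p \neq \cata{\m}$ gives $\dim \cata{R}_\p < d$, while $\fa \ne 0$ together with $\fa \subseteq \p$ forces $\p \neq (0)$, so $\dim \cata{R}_\p \ge 1$. Therefore
\[
t \;=\; \fpt(\cata{R}_\p, \cata{\D}|_{\cata{R}_\p}; \fa \cata{R}_\p) \;\in\; \FPT^{\mathrm{pr}}_{<d}(\cata{R}, \cata{\D}),
\]
as required. The main technical point is really just the interlocking of the three ingredients — the existence of a vertical-type realization in the catapower, the obstruction from denominator coprime to $p$, and the compatibility of test ideals with localization; each individual step is a short deduction from a result already established in the preceding sections, so I do not anticipate any significant further obstacle.
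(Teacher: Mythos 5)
Your proposal is correct and follows essentially the same route as the paper's proof: Theorem \ref{accum characterize}(2) to produce a vertical-type principal ideal in the catapower, the contrapositive of Lemma \ref{isolated non-vert} (using $t\in\Z_{(p)}$) to see the test ideal is not $\cata{\m}$-primary, then localization compatibility (Lemma \ref{test basic}(6)) at a prime $\p\neq\cata{\m}$ containing the test ideal. The only difference is cosmetic: you spell out the two inequalities establishing $t=\fpt(\cata{R}_\p,\cata{\D}|_{\cata{R}_\p};\fa\cata{R}_\p)$ and the bound $\dim\cata{R}_\p\ge 1$, which the paper leaves implicit.
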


\begin{proof}
Take $t \in \Acc(\FPT^{\mathrm{pr}}(R, \D)) \cap \Z_{(p)}$.
By Theorem \ref{accum characterize} (2), there exists a principal ideal $\fa \subsetneq \cata{R}$ such that $t= \fpt(\cata{R}, \cata{\D}; \fa)$ and $\fa$ is of vertical type with respect to $(\cata{R}, \cata{\D})$.
Applying Lemma \ref{cata fpt} (1) and Lemma \ref{isolated non-vert}, the ideal $\tau(R,\D, \fa^t)$ is not $\cata{m}$-primary, where $\cata{\m} \subseteq \cata{R}$ is the maximal ideal.
Take a prime ideal $\p \neq \cata{\m} \subseteq \cata{R}$ such that $\tau(\cata{R},\cata{\D}, \fa^t)_{\p} \neq (\cata{R})_{\p}$.
Since test ideals are compatible with the localizations (Lemma \ref{test basic} (6)), we have
\[
t= \fpt((\cata{R})_{\p}, \cata{\D}|_{(\cata{R})_{\p}}; \fa_{\p}).\] 
On the other hand, since $\dim \cata{R} =d$ by Lemma \ref{cata cong}, we have $\dim (\cata{R})_{\p} <d$ and hence we conclude that 
\[
t \in \FPT^{\mathrm{pr}}_{<d}(\cata{R}, \cata{\D}).
\]
\end{proof}

\begin{cor}[Theorem \ref{1.4}]\label{Accum positive}
Let $p$ be a prime number and $d \ge 2$ be an integer.
Then we have 
\[
\Acc(\TT_{d,p}) \cap \Z_{(p)} \subseteq \TT_{d-1,p}.
\]
\end{cor}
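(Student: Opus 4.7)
The plan is to reduce the corollary to Theorem \ref{accum on pair} by realizing $\TT_{d,p}$ as the set of $F$-pure thresholds of principal ideals on a single ``universal'' regular local ring, and then unpacking the lower-dimensional conclusion furnished by that theorem.

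First I would set $R := \cata{(P_d(\overline{\F}_p))}$. By Proposition \ref{cata cong}, $R \cong \widehat{P}_d(k) = k[[x_1, \dots, x_d]]$ for $k := \ultra{(\overline{\F}_p)}$, so $R$ is an $F$-finite regular local ring of dimension $d$ and characteristic $p$. The argument used in the proof of Lemma \ref{T basic} (4) gives $\TT_{d,p} \subseteq \FPT^{\mathrm{pr}}(R)$, while the reverse inclusion is immediate from the very definition of $\TT_{d,p}$. Hence $\TT_{d,p} = \FPT^{\mathrm{pr}}(R)$.

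Next I apply Theorem \ref{accum on pair} to the strongly $F$-regular pair $(R, 0)$, for which $(p-1)(K_R + 0)$ is trivially Cartier. This yields
\[
\Acc(\TT_{d,p}) \cap \Z_{(p)} \;=\; \Acc(\FPT^{\mathrm{pr}}(R)) \cap \Z_{(p)} \;\subseteq\; \FPT^{\mathrm{pr}}_{<d}(\cata{R}).
\]
A further application of Proposition \ref{cata cong} identifies $\cata{R}$ with $K[[x_1, \dots, x_d]]$ for the $F$-finite field $K := \ultra{k}$, so $\cata{R}$ is again regular; thus for every prime $\p \subsetneq \cata{R}$ with $1 \le \dim (\cata{R})_\p < d$, the localization $(\cata{R})_\p$ is an $F$-finite regular local ring of some dimension $d'$ with $1 \le d' < d$. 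Consequently $\FPT^{\mathrm{pr}}((\cata{R})_\p) \subseteq \TT_{d', p}$, and iterating the inclusion $\TT_{d''-1, p} \subseteq \TT_{d'', p}$ from Lemma \ref{T basic} (5) gives $\TT_{d', p} \subseteq \TT_{d-1, p}$. Assembling these inclusions proves the corollary.

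The essentially new content is concentrated in Theorem \ref{accum on pair}, whose proof---via the vertical-type characterization of accumulation points in Theorem \ref{accum characterize} together with the obstruction for principal ideals in Lemma \ref{isolated non-vert}---is the main engine. The remaining steps above are bookkeeping, and the only mild subtlety worth flagging is verifying that both $R$ and $\cata{R}$ are genuinely regular (not merely normal, as Proposition \ref{cata cong} asserts in general), which follows from the explicit power-series description above.
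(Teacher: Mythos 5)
Your proposal is correct and follows the paper's own argument in all essentials: realize $\TT_{d,p}$ as $\FPT^{\mathrm{pr}}(A)$ for $A = \widehat{P}_d(k)$ (Lemma \ref{T basic}(4)), apply Theorem \ref{accum on pair} to $(A,0)$, and then observe that $\cata{A}$ is again a regular local ring of dimension $d$, so localizations at non-maximal primes land in $\TT_{d',p} \subseteq \TT_{d-1,p}$. The only cosmetic difference is that you re-derive the universal ring via the catapower construction rather than citing Lemma \ref{T basic}(4) directly, and you spell out the regularity of $\cata{A}$ (which Proposition \ref{cata cong} gives only implicitly through the power-series description) and the chain $\TT_{d',p} \subseteq \TT_{d-1,p}$ via Lemma \ref{T basic}(5).
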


\begin{proof}
Take an $F$-finite field $k$ of characteristic $p$ such that $A=\widehat{P}_d(k)$ satisfies $\TT_{d,p} =\FPT^{\mathrm{pr}}(A)$ (Lemma \ref{T basic} (4)).
Then it follows from Theorem \ref{accum on pair} that 
\[
\Acc(\TT_{d,p}) \cap \Z_{(p)} = \Acc(\FPT^{\mathrm{pr}}(A)) \cap \Z_{(p)} \subseteq  \FPT^{\mathrm{pr}}_{<d}(\cata{A}).
\]
Noting that $\cata{A}$ is a regular local ring by Proposition \ref{cata cong}, we have
\[
\FPT^{\mathrm{pr}}_{<d} (\cata{A}) \subseteq \TT_{d-1, p},
\]
which completes the proof.
\end{proof}

\begin{cor}[Corollary \ref{1.6}]\label{fpt polynomial}
Let $p$ be a prime number and $d \ge 1$ be an integer.
Then we have 
\[
\TT_{d,p} \cap \Z_{(p)} \subseteq \TT^{\circ}_{d,p}.
\]
\end{cor}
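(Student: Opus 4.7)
The plan is to combine Corollary \ref{Accum positive} (which constrains the accumulation points landing in $\Z_{(p)}$) with the general topological fact that the closure of a set equals the union of the set with its accumulation points, and then run an induction on $d$. The argument is essentially the one sketched between Corollary \ref{1.6} and Theorem \ref{1.7} in the introduction; my job here is just to lay it out carefully.

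First, I would dispose of the base case $d=1$: by Lemma \ref{T basic}(1), we have the outright equality $\TT_{1,p} = \TT_{1,p}^{\circ}$, so the inclusion $\TT_{1,p} \cap \Z_{(p)} \subseteq \TT_{1,p}^{\circ}$ is trivial. For the inductive step, assume the claim holds in dimension $d-1$, i.e., $\TT_{d-1,p} \cap \Z_{(p)} \subseteq \TT_{d-1,p}^{\circ}$.

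Next, I would invoke Lemma \ref{T basic}(2), which says that the closure of $\TT_{d,p}^{\circ}$ in $\R$ is $\TT_{d,p}$. Since the closure of any subset $S \subseteq \R$ equals $S \cup \Acc(S)$, this gives
\[
\TT_{d,p} \;=\; \TT_{d,p}^{\circ} \cup \Acc(\TT_{d,p}^{\circ}) \;\subseteq\; \TT_{d,p}^{\circ} \cup \Acc(\TT_{d,p}),
\]
where the last inclusion uses $\TT_{d,p}^{\circ} \subseteq \TT_{d,p}$ (a trivial consequence of the definitions). Intersecting with $\Z_{(p)}$ and applying Corollary \ref{Accum positive} to the second summand yields
\[
\TT_{d,p} \cap \Z_{(p)} \;\subseteq\; \TT_{d,p}^{\circ} \cup \bigl(\TT_{d-1,p} \cap \Z_{(p)}\bigr).
\]

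Finally, by the inductive hypothesis the second summand is contained in $\TT_{d-1,p}^{\circ}$, and by Lemma \ref{T basic}(5) we have $\TT_{d-1,p}^{\circ} \subseteq \TT_{d,p}^{\circ}$. Chaining these inclusions produces $\TT_{d,p} \cap \Z_{(p)} \subseteq \TT_{d,p}^{\circ}$, completing the induction. There is no real obstacle here: all the substantive content has already been loaded into Corollary \ref{Accum positive} and Lemma \ref{T basic}, so the only thing to be careful about is the bookkeeping that keeps the $\cap \, \Z_{(p)}$ on the right side of every inclusion where it is needed.
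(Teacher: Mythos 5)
Your proof is correct and follows essentially the same route as the paper's own argument: induction on $d$ using Lemma \ref{T basic}(1) for the base case, the identity $\overline{S}=S\cup\Acc(S)$ together with Lemma \ref{T basic}(2) and Corollary \ref{Accum positive} for the inductive step, and Lemma \ref{T basic}(5) to close the loop.
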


\begin{proof}
When $d=1$, the assertion follows from Lemma \ref{T basic} (1).
We assume that $d \ge 2$ and the assertion holds in $d-1$.
It follows from Lemma \ref{T basic} (2) that 
\[
\TT_{d,p}=\Acc(\TT_{d,p}^{\circ}) \cup \TT_{d,p}^{\circ} \subseteq \Acc(\TT_{d,p}) \cup \TT_{d,p}^{\circ}.
\]
By Corollary \ref{Accum positive}, we have
\[
\TT_{d,p} \cap \Z_{(p)} \subseteq (\TT_{d-1,p} \cap \Z_{(p)}) \cup \TT_{d,p}^{\circ}.
\]
By induction hypothesis and Lemma \ref{T basic} (5), we have 
\[
\TT_{d-1,p} \cap \Z_{(p)} \subseteq \TT_{d-1,p}^{\circ} \subseteq \TT_{d,p}^{\circ},
\]
which completes the proof.
\end{proof}

\section{Two dimensional case}

\begin{prop}\label{stability of region} 
Let $(R,\D)$ be a strongly $F$-regular pair such that $(p^e-1)(K_R+\D)$ is Cartier for some integer $e>0$ and $f_1, \dots, f_l \in \m$ be non-zero elements such that the triple $(R, \D, \prod_{i=1}^l (f_i)^1)$ is sharply $F$-pure outside $\m$.
Then there exists an integer $M>0$ such that for any real number $0 \le t_i <1$ and any element $g_i \in R$ such that $g_i \equiv f_i (\mod \m^M)$ for $i=1, \dots, l$, we have 
\begin{equation}\label{same tau}
\tau(R, \D, \prod_{i=1}^l (f_i)^{t_i})=\tau(R, \D, \prod_{i=1}^l (g_i)^{t_i}).
\end{equation}
In particular, we have $\SFRR(R,\D; (f_1), \dots, (f_l)) =\SFRR(R,\D; (g_1), \dots, (g_l))$.
\end{prop}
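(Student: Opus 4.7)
The plan is to compute both test ideals using Lemma \ref{test basic}(5) and to absorb the change of generators into a uniform $\m$-primary bound on the test ideals that holds across the whole parameter region $[0,1)^l$.

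Step 1 (Uniform $\m$-primary bound). I would first establish an integer $N>0$ such that $\m^N \subseteq \tau(R, \D, \prod_{i} f_i^{t_i})$ for every $(t_1, \ldots, t_l) \in [0,1)^l$. The key point is that sharp $F$-purity of $(R, \D, \prod(f_i)^1)$ outside $\m$, combined with the strict inequalities $t_i<1$, can be upgraded, by an iteration of $\phi_\D^e$, to the statement that $(R_\p, \D|_{R_\p}, \prod(f_iR_\p)^{t_i})$ is strongly $F$-regular at every prime $\p \neq \m$. Lemma \ref{test basic}(1) and (6) then force each test ideal to be $\m$-primary or the unit ideal. Monotonicity (Lemma \ref{test basic}(2)) together with the discreteness of $F$-jumping numbers (Lemma \ref{discrete}, applied iteratively in each coordinate) imply that only finitely many distinct test ideals arise on $[0,1)^l$, yielding the uniform $N$.

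Step 2 (Formula for the test ideal). By Lemma \ref{test basic}(5), for each $(t_1,\ldots,t_l)$ there exists $n \ge 1$ with
\[
\tau(R, \D, \prod_i f_i^{t_i}) = \phi_\D^n \Bigl( F^{en}_* \Bigl( \prod_i f_i^{\age{p^{en} t_i}} \cdot \tau(R, \D)\Bigr)\Bigr),
\]
and the analogous formula holds with $g_i$ replacing $f_i$. By the finiteness from Step 1, one may take a single uniform $n_0$ that works for all $(t_1, \ldots, t_l)\in[0,1)^l$, and (once $M$ is large enough that $g$ also satisfies the sharp $F$-purity hypothesis outside $\m$) simultaneously for $g$ in place of $f$.

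Step 3 (Perturbation estimate). Writing $g_i=f_i+h_i$ with $h_i \in \m^M$ and using that $f_i \in \m$, the binomial theorem gives $g_i^{k_i}-f_i^{k_i} \in \m^{M+k_i-1}$ for each $i$, and telescoping over $i$ yields
\[
\prod_{i=1}^l g_i^{k_i} \;-\; \prod_{i=1}^l f_i^{k_i} \;\in\; \m^{M-1+\sum_i k_i}.
\]

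Step 4 (Combining). Setting $k_i = \age{p^{en_0} t_i}$ and using $R$-linearity of $\phi_\D^{n_0}$, the discrepancy between the $f$-formula and the $g$-formula is contained in
\[
\phi_\D^{n_0}\Bigl(F^{en_0}_*\bigl(\m^{M-1+\sum k_i}\,\tau(R, \D)\bigr)\Bigr) \;\subseteq\; \m^{\sage{(M-1+\sum k_i)/p^{en_0}}}.
\]
Choosing $M$ so that $M-1 \ge p^{en_0} N$, this discrepancy lies inside $\m^N$, which by Step 1 (applied to both $f$ and $g$) is contained in each of the two test ideals. Two ideals that are congruent modulo $\m^N$ and both contain $\m^N$ must coincide, giving the equality \eqref{same tau}. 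The assertion $\SFRR(R, \D; (f_1), \ldots, (f_l)) = \SFRR(R, \D; (g_1), \ldots, (g_l))$ then follows from Lemma \ref{test basic}(1) and Remark \ref{SFRR and fpt} (the condition $t_i\ge1$ reduces to $t_i<1$ by standard Skoda-type divisions by $f_i$).

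Main obstacle. The crux is Step 1: promoting sharp $F$-purity at the boundary exponents to uniform strong $F$-regularity outside $\m$ for all strictly smaller exponents, and then extracting a single $\m^N$ that works over the non-compact parameter region $[0,1)^l$. A secondary subtlety is the uniform choice of $n_0$ in Step 2, which requires the multi-parameter refinement of the discreteness of $F$-jumping numbers (Lemma \ref{discrete}).
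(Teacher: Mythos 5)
Your Steps 3--4 (the perturbation estimate and the "two ideals congruent modulo something contained in both must coincide" device) are sound, and Step 1's conclusion that a uniform $\m$-primary lower bound $\m^N \subseteq \tau(R,\D,\prod_i f_i^{t_i})$ exists for all $(t_1,\dots,t_l)\in[0,1)^l$ is correct, though the paper obtains it more directly by setting $\fq := \tau(R,\D,(f)^{1-\epsilon})$ for $f = \prod_i f_i$ and small $\epsilon$ (Lemma~\ref{discrete} gives that this stabilizes, and Lemma~\ref{test basic}(2) shows $\fq$ sits inside every $\tau(R,\D,\prod_i f_i^{t_i})$ with $t_i<1$), which sidesteps the multi-parameter discreteness you invoke.

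The genuine gap is Step 2. There is no single $n_0$ for which
\[
\tau\bigl(R, \D, \textstyle\prod_i f_i^{t_i}\bigr) \;=\; \phi_\D^{n_0}\Bigl( F^{en_0}_* \bigl( \textstyle\prod_i f_i^{\age{p^{en_0} t_i}} \, \tau(R, \D)\bigr)\Bigr)
\]
holds for \emph{all} $(t_1,\dots,t_l)\in[0,1)^l$. The chain $\phi_\D^{n}(F^{en}_*\prod_i f_i^{\age{p^{en}t_i}}\tau(R,\D))$ is increasing in $n$ and stabilizes to the test ideal, but the stabilization index genuinely depends on $t$, and the "finiteness" of the set of test ideals does nothing to control it. Already for $R=\F_p[[x]]$, $\D=0$, $l=1$, $f_1=x$ one has $\tau(R,(x)^t)=R$ for all $t\in[0,1)$, yet for $t\in\bigl((p^{en}-1)/p^{en},\,1\bigr)$ the right side equals $\phi^n(F^{en}_*(x)^{p^{en}})=(x)\neq R$; so for every $n$ the formula fails on a nonempty subinterval accumulating at $1$. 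Since your Step 4 feeds the exponent $k_i=\age{p^{en_0}t_i}$ into a single application of $\phi_\D^{n_0}$ and then chooses $M$ in terms of $n_0$, the whole argument collapses without the uniform $n_0$.

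The paper circumvents this by never using the "sufficiently large $n$" formula directly. Instead it reduces to $t_i\in\Z[1/q]\cap[0,1)$ by density and right-continuity (Lemma~\ref{test basic}(3)), and then inducts on the smallest $n$ with $q^nt_i\in\Z$, applying the \emph{one-step} identity $\tau(R,\D,f_\bullet^{t_\bullet}) = \phi^e_\D(F^e_*\tau(R,\D,f_\bullet^{qt_\bullet}))$ from Lemma~\ref{test basic}(5) together with Skoda (Lemma~\ref{test basic}(4)) to peel off the integer part $\sage{qt_i}$. At each step only one application of $\phi^e_\D$ occurs, the exponents fed into $\phi^e_\D$ stay bounded by $q$, and the error $\m^M$ is absorbed via $\m^M\subseteq(\m\fq)^{[q]}$, so $M$ can be fixed once and for all. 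To repair your proof you would need to replace the single application of $\phi_\D^{n_0}$ by this recursive scheme.

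A smaller issue, worth flagging but fixable: the inclusion $\phi_\D^{n_0}(F^{en_0}_*\m^a\tau(R,\D))\subseteq \m^{\sage{a/p^{en_0}}}$ is not quite right as stated, because the Cartier-type map $\phi_\D^{n_0}$ loses roughly $\dim R$ powers of $\m$ per iteration; one only gets $\subseteq \m^{\sage{a/p^{en_0}}-C}$ for a constant $C$ independent of $a$, which is still enough after enlarging $M$. The paper avoids even this by comparing against $\phi^e_\D(F^e_*(\m\fq)^{[q]})=\m\fq\,\phi^e_\D(F^e_*R)\subseteq\m\fq$, which is an exact computation rather than an estimate.
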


\begin{proof}
Set $f := \prod_{i=1}^l f_i \in R$ and $q:=p^e$.
It follows from Lemma \ref{test basic} (3) and Lemma \ref{discrete} that there exists an ideal $\fq \subseteq R$ such that $\tau(R,\D, (f)^{1-\epsilon})=\fq$ for any sufficiently small $\epsilon >0$.
We note that by Lemma \ref{test basic} (2), for any real number $0 \le t_i <1$, we have 
\[
\fq \subseteq \tau(R,\D, \prod_{i=1}^l (f_i)^{t_i}).
\]
On the other hand, it follows from Lemma \ref{F-sing basic} and Lemma \ref{test basic} (1) (6) that the ideal $\fq$ is $\m$-primary.
Let $M>0$ be an integer such that 
\[
\m^M \subseteq (\m \fq)^{[q]}.
\]

Take $g_1, \dots, g_l \in R$ such that $g_i \equiv f_i \mod \m^M$.
Noting that $\Z[1/p] \subseteq \R$ is a dense subset, combining with Lemma \ref{test basic} (3), it is enough to verify the equation \eqref{same tau} in the assertion in the case where $t_i \in [0,1) \cap \Z[1/q]$ for all $i$.

Fix $t_1, \dots, t_l \in [0,1) \cap \Z[1/p]$ and Let $n \ge 0$ be the smallest integer such that $q^n t_i \in \Z$ for all $i$.
If $n=0$, then we have $t_1= \dots=t_l=0$ and hence the equation \eqref{same tau} is obvious.

We assume that $n \ge 1$ and the assertion holds for $n-1$.
Let 
\[
qt_i = \sage{qt_i} + \fr{qt_i}
\] 
be the decomposition of $qt_i$ into the integral part $\sage{qt_i} \in \Z$ and the fractional part $\fr{qt_i} \in [0,1)$.
Consider the triple $(R,\D, f_{\bullet}^{\fr{qt}_\bullet}: = \prod_{i=1}^l f_i^{\fr{qt_i}})$ and set 
\[
I : = \tau(R, \D, f_\bullet^{\fr{qt_\bullet}}) \subseteq R.
\]
Then it follows from induction on $n$ that we have $I= \tau(R, \D, g_\bullet^{\fr{qt_\bullet}})$,
where $g_\bullet^{\fr{qt_\bullet}} = \prod_{i=1}^l g_i^{\fr{qt_i}}$.
It follows from Lemma \ref{test basic} (4) and (5) that
\begin{equation}\label{tau henkei}
\tau(R,\D, f_\bullet^{t_\bullet})=\phi^e_{\D}(F^e_* \tau(R,\D, f_\bullet^{qt_{\bullet}}))=\phi^e_\D(F^e_* f_\bullet^{\sage{qt_\bullet}} I),
\end{equation}
where we define $f_\bullet^{qt_\bullet} : = \prod_i f_i^{qt_i}$ and $f_\bullet^{\sage{qt_\bullet}} : = \prod_i f_i^{\sage{qt_i}} \in R$.
Similarly, we have
\begin{equation}\label{tau henkei2}
\tau(R,\D, g_\bullet^{t_\bullet})=\phi^e_\D(F^e_* g_\bullet^{\sage{qt_\bullet}} I),
\end{equation}
where $g_\bullet^{t_\bullet} : = \prod_i g_i^{t_i}$ and $g_\bullet^{\sage{qt_\bullet}}:=\prod_i g_i^{\sage{qt_i}} \in R$.

On the other hand, since $(f_i)+\m^M=(g_i)+\m^M$ for all $i$, we have $f_\bullet^{\sage{qt_\bullet}} I +\m^M = g_\bullet^{\sage{qt_\bullet}} I + \m^M$.
Combining with the equations \eqref{tau henkei} and \eqref{tau henkei2}, we have
\begin{equation}\label{before NAK}
\tau(R,\D, f_\bullet^{t_\bullet}) + \phi^e_\D(F^e_* \m^M)=\tau(R,\D,g_\bullet^{t_\bullet}) +\phi^e_\D(F^e_* \m^M).
\end{equation}

It follows from the definition of $M$ that one has
\[
\phi^e_\D(F^e_* \m^M) \subseteq \phi^e_\D(F^e_* ((\m \fq)^{[q]})) =\m \fq \phi^e_\D(F^e_* R) \subseteq \m \fq \subseteq \m \tau(R,\D, f_\bullet^{t_\bullet}).
\]
Similarly, we have
\[
\phi^e_\D(F^e_* \m^M) \subseteq \m \tau(R,\D, g_\bullet^{t_\bullet}).
\]
Then the equation \eqref{same tau} in the assertion follows from the equation \eqref{before NAK} by Nakayama.
\end{proof}

\begin{cor}[Theorem \ref{1.7}]\label{two-dim}
For any prime number $p>0$, we have
\[
\TT_{2,p} = \TT_{2,p}^{\circ}.
\]
\end{cor}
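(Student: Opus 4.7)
The plan is to establish $\TT_{2,p} \subseteq \TT_{2,p}^{\circ}$, the reverse inclusion being immediate. Fix $t \in \TT_{2,p}$. If $t = 0$ or $t = 1/n$ for some integer $n \ge 1$, then $t \in \TT_{1,p} \subseteq \TT_{2,p}^{\circ}$ by Lemma \ref{T basic}(1) and (5), so we may assume $t \notin \{0\} \cup \{1/n : n \ge 1\}$. Taking $k := \ultra{(\overline{\F}_p)}$, which is algebraically closed by \L o\'{s}'s theorem, Lemma \ref{T basic}(4) lets us write $t = \fpt(A;(f))$ with $A := \widehat{P}_2(k) = k[[x,y]]$ and $f \in A$ nonzero. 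Since $A$ is a two-dimensional regular local ring, hence a UFD, we factor $f = u \prod_{i=1}^l f_i^{a_i}$ with $u \in A^\times$ and the $f_i$ pairwise non-associate irreducibles.

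The dimension-two input is that the triple $(A, 0, \prod_i (f_i)^1)$ is sharply $F$-pure outside $\m$: at each height-one prime $\p$ of $A$ the localization $A_\p$ is a DVR, at most one $f_i$ lies in $\p$, and it has order $1$ there, so the localized triple is sharply $F$-pure. Moreover, from $\fpt(A;(f)) \le \fpt(A;(f_i^{a_i})) = 1/a_i$ together with $t \notin \{1/n\}$, one deduces $t a_i < 1$ for every $i$. I would then apply Proposition \ref{stability of region} to $(A, 0)$ and $f_1, \dots, f_l$ to obtain an integer $M > 0$ such that any $g_1, \dots, g_l \in A$ with $g_i \equiv f_i \pmod{\m^M}$ satisfy $\SFRR(A;(f_1),\dots,(f_l)) = \SFRR(A;(g_1),\dots,(g_l))$; by Remark \ref{SFRR and fpt}, this forces $\fpt(A; \prod_i g_i^{a_i}) = \fpt(A; \prod_i f_i^{a_i}) = t$.

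To conclude, take each $g_i \in k[x,y]$ to be the polynomial obtained from $f_i$ by discarding all monomials of degree $\ge M$, so that $g_i \equiv f_i \pmod{\m^M}$ by construction, and set $g := \prod_i g_i^{a_i} \in k[x,y]$ and $B := k[x,y]_{(x,y)} = P_2(k)$. The completion compatibility of test ideals (Lemma \ref{test basic}(7)) yields $\fpt(B;(g)) = \fpt(\widehat{B};(g)) = \fpt(A;(g)) = t$. Since $k$ is algebraically closed, Lemma \ref{T basic}(3) gives $\FPT(P_2(k)) = \TT_{2,p}^{\circ}$, so $t = \fpt(B;(g)) \in \TT_{2,p}^{\circ}$, as required. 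The main conceptual obstacle is coefficient descent: one cannot directly approximate $f$ by an element of $\overline{\F}_p[x,y]$, since the natural map $\overline{\F}_p[x,y] \to A/\m^M$ is far from surjective once $k \supsetneq \overline{\F}_p$. This obstruction is sidestepped by Lemma \ref{T basic}(3), which erases the dependence of $\TT_{2,p}^{\circ}$ on the choice of algebraically closed base field and lets us conclude without ever descending coefficients to $\overline{\F}_p$.
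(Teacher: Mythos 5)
Your proposal is correct and follows essentially the same approach as the paper's proof: realize $\TT_{2,p}$ as $\FPT^{\mathrm{pr}}(\widehat{P}_2(k))$ for an algebraically closed $k$ via Lemma \ref{T basic}(4), factor $f$ into irreducibles using that $k[[x,y]]$ is a UFD, check sharp $F$-purity away from $\m$, apply Proposition \ref{stability of region} to replace the $f_i$ by polynomial approximants, and conclude with Remark \ref{SFRR and fpt}, completion compatibility (Lemma \ref{test basic}(7)), and Lemma \ref{T basic}(3). Your extra steps (treating $t\in\{0\}\cup\{1/n\}$ separately and verifying $ta_i<1$) are harmless but unnecessary, since the equality of SFRR regions already yields equality of the relevant $F$-pure thresholds without those restrictions.
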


\begin{proof}
Take an algebraically closed field $k$ of characteristic $p$ such that the regular local ring $A:=P_2(k)$ satisfies $\TT_{2,p}=\FPT^{\mathrm{pr}}(\widehat{A})$ (Lemma \ref{T basic} (4)).
Take a non-zero principal ideal $(f) \subsetneq \widehat{A}$.
By lemma \ref{T basic} (3), it is enough to show that there exists a non-zero principal ideal $(g) \subseteq A$ such that $\fpt(\widehat{A};(f)) = \fpt(A; (g))$.
Consider the irreducible decomposition $f= \prod_{i=1}^l f_i^{n_i} \in \widehat{A}$.
Since the triple $(\widehat{A}, 0 , \prod_{i=1}^l (f_i)^1)$ is sharply $F$-pure in codimension one, applying Proposition \ref{stability of region}, there exists $g_1, \dots, g_l \in A'$ such that 
\[\SFRR(\widehat{A},0; (f_1), \dots, (f_l)) =\SFRR(\widehat{A},0; (g_1), \dots, (g_l)).\]
Set $g:= \prod_{i=1}^l g_i^{n_i}$.
Then it follows from Remark \ref{SFRR and fpt}, Lemma \ref{test basic} (7) and Lemma \ref{T basic} (3) that 
\[
\fpt(\widehat{A};(f))=\fpt(\widehat{A}; (g)) = \fpt(A; (g))
\]
as desired.
\end{proof}

\bibliographystyle{abbrv}
\bibliography{ref}
\end{document}